\title{Fock's dimer model on the Aztec diamond}
\author{%
  C\'edric Boutillier%
  \thanks{%
    {\small Sorbonne Université, CNRS,
      Laboratoire de Probabilités Statistique et Modélisation, LPSM, UMR 8001,
      F-75005 Paris, France; Institut Universitaire de France.
      \texttt{cedric.boutillier@sorbonne-universite.fr}
}},
  B\'eatrice de Tili\`ere%
  \thanks{{\small %
PSL University-Dauphine, CNRS, UMR 7534, CEREMADE, 75016 Paris, France.}{\small\texttt{ detiliere@ceremade.dauphine.fr}}
}
}
\begin{document}
\maketitle

\begin{abstract}
We consider the dimer model on the Aztec diamond with Fock's weights, which is
gauge equivalent to the model with any choice of positive weight function. We
prove an explicit, compact formula for the inverse Kasteleyn matrix, thus
extending numerous results in the case of periodic graphs. We also show an
explicit product formula for the partition function; as a specific instance of
the genus 0 case, we recover Stanley's
formula~\cite{Propp_talk,BYYangPhD}. We then use our explicit formula for the
inverse Kasteleyn matrix to recover, in a simple way, limit shape results; we
also obtain new ones. In doing so, we extend the correspondence between the
limit shape and the amoeba  of the corresponding spectral curve
of~\cite{BerggrenBorodin} to the case of non-generic weights.  
\end{abstract}

\section{Introduction}\label{sec:intro}

We consider the dimer model on the Aztec diamond with Fock's
weights~\cite{Fock,BCdT:genusg}. Our first main result is an explicit expression
for the inverse Kasteleyn matrix, which only uses theta functions, prime forms
and local functions in the kernel of the associated Kasteleyn operator. We also
prove that any dimer model on the Aztec diamond is gauge equivalent to a dimer
model with Fock's weights, thus showing that we actually treat the dimer model
on the Aztec diamond in full generality. Next, building on an idea of
Propp~\cite{Propp_talk}, we prove an induction formula for the partition
function, showing that it admits a product form; as a specific case, we recover
Stanley's celebrated formula~\cite{BYYangPhD}.
Finally, we use our explicit formula for the inverse Kasteleyn matrix to recover
and extend,
in a simple way, results on limit shapes in
genus 0, 1, and higher,
generalizing the geometric correspondence
  between the limit shape of the Aztec diamond for periodic weights and the
  amoeba of the corresponding spectral curve proven by Berggren and
Borodin~\cite{BerggrenBorodin} under some technical assumptions, which are now
lifted.

Let us now be more specific, and give some historical background. This paper
aims at, in some sense, closing a long history of formulas for the inverse
Kasteleyn matrix of the dimer model on the Aztec diamond. This was initiated
in~\cite{Helfgott} in the case of uniform weights. Then,
in~\cite{ChhitaJohanssonYoung}, Chhita, Johansson and Young consider the case
where vertical and horizontal dominos have different weights, also known as
1-periodic weights; their proof consists in connecting dimer configurations to
particle systems together with a direct check of their guessed formula. Next
in~\cite{ChhitaYoung}, Chhita and Young treat the case of 1-periodic weights
with a possible volume penalization, and 2-periodic weights; to prove their
result, the authors use generating functions. The 1-periodic weights with volume
penalization is then generalized in~\cite{ryg} using the connection to Schur
processes. In~\cite{ChhitaJohansson} Chhita and Johannsson again consider
2-periodic weights; they prove a simplification of the formula
of~\cite{ChhitaYoung}, compute asymptotics of the inverse and limit shapes, in
particular they discuss the arctic curves separating the various phases given by
an algebraic curve of degree 8.
In~\cite{DuitsKuijlaars}, Duits and Kuijlaars propose a different approach for the 2-periodic weights of~\cite{ChhitaYoung} using non intersecting lattice paths and matrix valued orthogonal polynomials; allowing them to compute finer asymptotics using an associated Riemann-Hilbert problem. A new approach, involving block Toepliz matrices and the Wiener-Hopf factorization is proposed in~\cite{BerggrenDuits}; it is then extended to $2\times k$-periodic weights in~\cite{Berggren}; in particular, the authors give a rigorous proof of the arctic curves derived in~\cite{diFrancescoSotoGarrido}. In~\cite{BorodinDuits}, Borodin and Duits introduce biased $2\times 2$ periodic weights; two specific cases are the 1-periodic weights of~\cite{ChhitaJohanssonYoung} and the 2-periodic ones of~\cite{ChhitaYoung}; the method used is that of block Toepliz matrices and Wiener-Hopf factorization. This approach culminates in the paper~\cite{BerggrenBorodin}, where the authors consider generic $k\times l$-periodic weights, where generic means that the underlying spectral curve has maximal genus. Note that weights of the above $k\times l$ and $2\times k$ models can be specified so as to recover the 2-periodic weights of~\cite{ChhitaYoung}, but then the weights are not generic as assumed in the latter papers, so that the setting is actually different. 

Our first main result, Theorem~\ref{thm:Kinv}, encompasses all of the above cases. Here is the setting, see Sections~\ref{sec:Aztec_dimer}, \ref{sec:Fock} for more details. Consider an M-curve $\Sigma$ of genus $g$, with a distinguished real component $A_0$, and a real element $t$ of the Jacobian variety $\Jac(\Sigma)$. Denote by $\theta$ the associated Riemann theta function and by $E$ the prime form. Consider an Aztec diamond $\Az_n$ of size $n$, with its set of oriented train-tracks $\vec{\T}_n$ naturally split into four, and let $\alphab=(\alpha_j)_{j=1}^n$, $\betab=(\beta_j)_{j=1}^n$, $\gammab=(\gamma_j)_{j=1}^n$, $\deltab=(\delta_j)_{j=1}^n$ be the associated angles on $A_0$ satisfying the cyclic order conditions $\alphab<\gammab<\betab<\deltab$. Suppose that edges of the Aztec diamond are assigned \emph{Fock's weights}~\cite{Fock,BCdT:genusg}, meaning that, for every edge $\ws\bs$ with train-track angles $\alpha,\beta$, the corresponding coefficient of the Kasteleyn matrix $\Ks$ is given by:
\begin{equation*}
\Ks_{\ws,\bs}=\frac{E(\alpha,\beta)}{\theta(t+\mapd(\fs))\theta(t+\mapd(\fs'))},
\end{equation*}
where $\fs,\fs'$ are the dual faces adjacent to $\ws\bs$, and $\mapd$ is the discrete Abel map.
In Proposition~\ref{prop:reconstruction}, we prove that all dimer models on the Aztec diamond can be re-parameterized using Fock's weights. Then, Theorem~\ref{thm:Kinv} can loosely be stated as follows.

\begin{thm}\label{thm:Kinv_intro}
For every pair $(\bs,\ws)$ of black and white vertices of $\Az_n$, the coefficient $(\bs,\ws)$ of the inverse Kasteleyn matrix is explicitly given by 
\begin{multline}\label{eq:Kinv_intro}
\Ks^{-1}_{\bs,\ws}=\frac{1}{(2\pi i)^2}\frac{1}{\theta(p)}\int_{\C_2}\int_{\C_1}\frac{\theta(p+(v-u))}{E(u,v)}g_{\bs,0}(u)g_{0,\ws}(v)\prod_{j=1}^n \frac{E(\beta_j,u)}{E(\delta_j,u)}\frac{E(\delta_j,v)}{E(\beta_j,v)}+\\
-\II_{\{\bs \text{ right of }\ws\}}\frac{1}{2\pi i}\int_{\C_2}g_{\bs,\ws}(v),
\end{multline}
where $\C_1$, $\C_2$ are closed contours on $\Sigma$ used to integrate over $u$ and $v$, defined in Section~\ref{sec:K_inv}; $p=\sum\limits_{j=1}^n(\delta_j-\beta_j)-t-\mapd(0)$, and $g$ is the form in the kernel of the Kasteleyn matrix introduced in~\cite{BCdT:genusg}, see also Equation~\eqref{eq:form_g}. 
\end{thm}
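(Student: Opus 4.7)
The plan is to verify that the right-hand side of~\eqref{eq:Kinv_intro}, which I temporarily denote $\tilde{\Ks}^{-1}_{\bs,\ws}$, is a right inverse of $\Ks$; since $\Ks$ is a finite square matrix, this is enough. Concretely, I aim to prove
$$
\sum_{\bs}\Ks_{\ws',\bs}\,\tilde{\Ks}^{-1}_{\bs,\ws}=\delta_{\ws',\ws}\qquad\forall\ \ws,\ws'.
$$
After exchanging the finite sum over $\bs$ with the double contour integral in the first summand of $\tilde{\Ks}^{-1}$, the central object becomes $\sum_\bs \Ks_{\ws',\bs}\,g_{\bs,0}(u)$. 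The key property established in~\cite{BCdT:genusg} is that, for each $u\in\Sigma$, the form $\bs\mapsto g_{\bs,0}(u)$ lies in the kernel of the Kasteleyn operator at every interior white vertex of $\Az_n$. Hence this sum vanishes in the bulk and, for boundary $\ws'$, reduces to an explicit combination of a few boundary-edge terms whose dependence on $u$ has poles only at specific train-track angles on $A_0$.

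The next step is to deform the contour $\C_1$ on $\Sigma$. The factor $\prod_j E(\beta_j,u)/E(\delta_j,u)$ is engineered so that the residues at $u=\delta_j$ picked up during deformation exactly cancel the boundary contributions from the two sides of $\Az_n$ indexed by $\deltab$-train-tracks, while the $E(\beta_j,u)$ in the numerator preclude spurious residues at $u=\beta_j$. A symmetric argument in the variable $v$, based on the factor $\prod_j E(\delta_j,v)/E(\beta_j,v)$ and the contour $\C_2$, handles the boundary contributions on the $\betab$-sides. Throughout, the theta ratio $\theta(p+(v-u))/\theta(p)$ with $p=\sum_j(\delta_j-\beta_j)-t-\mapd(0)$ is precisely the combination ensuring that the quasi-periodicities of $\theta$ around the non-trivial cycles of $\Sigma$ cancel under every contour manipulation, so that the argument is valid directly on $\Sigma$ rather than only on its universal cover.

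For $\ws'=\ws$ the contour calculus must output~$1$, and this is where the correction term $-\II_{\{\bs\text{ right of }\ws\}}\,\frac{1}{2\pi i}\int_{\C_2}g_{\bs,\ws}(v)$ intervenes. Applying $\Ks_{\ws,\cdot}$ to it, using the analogous local identity of~\cite{BCdT:genusg} for $\sum_\bs \Ks_{\ws,\bs}\,g_{\bs,\ws'}(v)$, then closing $\C_2$ and collecting residues on $\Sigma$, one obtains the desired Kronecker delta. The indicator ``$\bs$ right of $\ws$'' encodes the half-plane decomposition of $\Az_n$ induced by $\ws$; it is the combinatorial datum needed to match the residues on the two sides of $\Sigma$ separated by $\C_2$ and, simultaneously, to kill the off-diagonal trace of the correction when $\ws'\neq\ws$.

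The main obstacle, in my view, is the precise combinatorial bookkeeping along the four sides of $\Az_n$: one must match, side by side, the boundary contributions produced by the kernel property of $g$ against the residues picked up during the two contour deformations, while respecting the cyclic order $\alphab<\gammab<\betab<\deltab$ on $A_0$. A sign error or a mis-orientation at a single train-track propagates through the whole calculation. A clean proof likely proceeds either by an induction on $n$ that peels off one layer of train-tracks at a time, or by directly exhibiting both sides of~\eqref{eq:Kinv_intro} as meromorphic functions of $(u,v)\in\Sigma^2$ sharing the same finite list of poles, residues, and quasi-periodic behaviour, and then invoking uniqueness on an M-curve.
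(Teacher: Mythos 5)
Your global strategy coincides with the paper's: both verify directly that the right-hand side is a one-sided inverse of the finite matrix $\Ks$, by summing $\Ks_{\ws,\bs}$ against the candidate over the black neighbours of a white vertex, exchanging the finite sum with the contour integrals, and invoking the kernel property of $g$ in the bulk so that only boundary white vertices require work. However, there are genuine gaps in how you propose to close the boundary cases. First, at a boundary white vertex only two of the four black neighbours are present, so the quantity to control is a partial sum $\sum_{i}\Ks_{\ws,\bs_i}g_{\bs_i,\cdot}(u)$ over two edges; in genus $g\geq 1$ this is not a rational expression whose ``dependence on $u$ has poles only at specific train-track angles'' that one can simply read off. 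The paper's indispensable tool here is Fay's identity, which rewrites $\Ks_{\ws,\bs}g_{\bs,\ws}(u)$ as $\omega_{\beta-\alpha}$ plus a combination of the holomorphic forms $\omega_\ell$ (whose integrals over the trivial contours $\C_1,\C_2$ vanish); your proposal never produces such an identity, and without it the boundary contributions cannot be evaluated.

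Second, the cancellation mechanism you describe --- residues at $u=\delta_j$ picked up while deforming $\C_1$ cancelling boundary terms on the $\deltab$-sides, and symmetrically in $v$ for the $\betab$-sides --- is not what happens, and I do not see how it could be made to work. In the paper the factors $\prod_j E(\beta_j,u)/E(\delta_j,u)$ interact with $g_{\ws,0}(u)$ by cancellation of poles against zeros, not by contributing residues; the left boundary ($\ws_x=0$) is handled by adjoining phantom black neighbours whose integrals vanish because $\C_1$ encloses none of their poles, while the right boundary ($\ws_x=2n$) is handled by observing that the only remaining poles of the $u$-integrand are the $\gamma_j$'s and $u=v$, so that, the total residue of a meromorphic form on a compact surface being zero, the $\C_1$-integral collapses to the residue at $u=v$; this converts the double integral into a single $\C_2$-integral which then cancels against, or completes, the correction term carrying the indicator. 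Your closing suggestion of an induction on $n$, or of a uniqueness argument for meromorphic functions of $(u,v)$, signals that the decisive computation is missing: both sides of the identity to be proven are numbers rather than functions of $(u,v)$, and the paper uses neither route --- it carries out a five-case analysis according to whether $\ws_x=2n$ and whether $\ws_x=\ws'_x$.
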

Note that to recover known results on the Aztec diamond requires to identify the associated weights in Fock's form. We do this explicitly in Section~\ref{sec:example_0_1} in the genus 0 case, which includes Stanley's weights~\cite{Propp_talk,BYYangPhD}, and in the genus 1 case, which includes the biased $2\times 2$ periodic case of~\cite{BorodinDuits}.  

Our next result, Theorem~\ref{thm:partition_function}, proves that the partition function admits a product form. Denote by $Z_n(\alphab,\betab,\gammab,\deltab;d)$ the partition function of the Aztec diamond $\Az_n$ of size $n$, with train-track angle parameters $\alphab,\betab,\gammab,\deltab$, and value $d$ for the Abel map at the vertex with coordinates $(0,0)$. Then, Theorem~\ref{thm:partition_function} can be stated as follows. 
\begin{thm}\label{thm:partition_function_intro}
For every $n\geq 1$, the partition function of the Aztec diamond $\Az_n$ with Fock's weights satisfies the following recurrence:
{\small 
\begin{align*}
Z_n(\alphab,\betab,\gammab,\deltab;d)&\cdot 
\prod_{f\in\odd_n}\frac{\theta(t+\mapd(\fs))}{\theta(t+\mapd(\fs)+\alpha+\beta-\gamma-\delta)}\prod_{\fs\in\bry_n}\theta(t+\mapd(\fs))=\\
=Z_{n-1}&((\alpha_j)_{j=1}^{n-1},(\beta_j)_{j=2}^n,(\gamma_j)_{j=1}^{n-1},(\delta_j)_{j=2}^n;d+\beta_1-\delta_1)\cdot
\Bigl[\prod_{j=1}^{n}\ |E(\alpha_j,\beta_j)E(\gamma_j,\delta_j)|\Bigr],
\end{align*}}
with the convention that $Z_0=1$.  
\end{thm}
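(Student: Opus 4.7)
The plan is to prove the recurrence by applying the shuffling algorithm for the Aztec diamond in the spirit of Propp~\cite{Propp_talk}, with all weight factors tracked through the elementary local moves on Fock-weighted dimer models. Shuffling decomposes into three ingredients: (i) removal of forced edges at the boundary of $\Az_n$, (ii) urban renewal (spider move) at a distinguished family of inner faces, and (iii) contraction of the degree-2 vertices these moves create. For Fock's weights each of these operations produces a weight system that is again of Fock's form, with an explicit transformation of the train-track angles and of the discrete Abel map, and with an explicit multiplicative factor in the partition function involving theta functions and prime forms; the relevant rules are established in~\cite{BCdT:genusg}.

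First, I would isolate the boundary collar of $\Az_n$. Its degree-1 vertices force their unique incident edge in every dimer configuration, so $Z_n$ factors as the product of the Fock weights of these forced edges times the partition function of the residual graph. Using
\[
\Ks_{\ws,\bs}=\frac{E(\alpha,\beta)}{\theta(t+\mapd(\fs))\theta(t+\mapd(\fs'))}
\]
and inspecting which edges of $\Az_n$ are forced together with the train-track angles they carry, this produces exactly the factor $\prod_{j=1}^n |E(\alpha_j,\beta_j)E(\gamma_j,\delta_j)|$ on the right, together with a denominator of theta values over faces adjacent to forced edges that, after bookkeeping, matches $\prod_{\fs\in\bry_n}\theta(t+\mapd(\fs))$ on the left. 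Next, on the residual graph, I would apply urban renewal at each face $\fs\in\odd_n$: for Fock's weights this move multiplies the partition function by $\theta(t+\mapd(\fs))/\theta(t+\mapd(\fs)+\alpha+\beta-\gamma-\delta)$, shifts the Abel map on the transformed dual face by $\alpha+\beta-\gamma-\delta$, and leaves Fock's form after contracting the degree-2 vertices it creates. Taking the product over $\fs\in\odd_n$ produces the remaining theta ratio on the left-hand side. A combinatorial argument then identifies the graph resulting from these moves as $\Az_{n-1}$ with parameters $((\alpha_j)_{j=1}^{n-1},(\beta_j)_{j=2}^n,(\gamma_j)_{j=1}^{n-1},(\delta_j)_{j=2}^n)$ and with Abel map value $d+\beta_1-\delta_1$ at the new origin, the shift arising from tracing the corner vertex through the local moves.

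The main obstacle is not analytical -- the behaviour of Fock's weights under forced edge removal and urban renewal is already available from~\cite{BCdT:genusg} -- but combinatorial bookkeeping: one has to line up all the local factors produced by shuffling with the structured products $\prod_{\odd_n}$ and $\prod_{\bry_n}$ appearing on the left-hand side, verifying that no theta or prime-form factor is double-counted or missed, and that the new train-track assignment on the reduced graph matches the claimed shifted indexing for $\Az_{n-1}$. The base case $Z_0=1$ is trivial, and iterating the resulting recurrence yields the product formula announced in the introduction.
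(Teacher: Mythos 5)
Your overall strategy is the paper's: reduce $\Az_n$ to $\Az_{n-1}$ by Propp-style shuffling and track the multiplicative factors of the spider move and of degree-two contraction for Fock's weights (the paper isolates these factors in a lemma before running the induction). However, the order in which you propose to perform the moves contains a concrete error that would derail the bookkeeping. You begin by peeling off forced edges at degree-one vertices of $\Az_n$; but $\Az_n$ has no degree-one vertices and no forced edges --- every boundary vertex of the Aztec diamond graph has degree at least two, and indeed $\Az_1$ is a single quadrilateral with two perfect matchings. The forced boundary edges only come into existence \emph{after} the spider moves have been performed at all faces of $\odd_n$: it is the spider move that creates the pendant vertices whose incident edges are then forced. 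So the correct order is: spider moves first, then removal of forced boundary edges, then contraction of the degree-two vertices, and your step (i) cannot be carried out as the first step.

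This misordering also makes your attribution of factors to individual steps incorrect. The single power $\prod_{j=1}^{n}|E(\alpha_j,\beta_j)E(\gamma_j,\delta_j)|$ is not the weight of the forced edges; it is the net result of a cancellation across all three steps. Each pair $(\alpha_j,\beta_j)$ enters $n$ spider moves along its row, contributing $|E(\alpha_j,\beta_j)|^{-n}$ to the ratio $Z_n/Z_{n-1}$, the two forced boundary edges it carries contribute $|E(\alpha_j,\beta_j)|^{2}$, and the $n-1$ contractions contribute $|E(\alpha_j,\beta_j)|^{n-1}$, netting to the exponent $1$ in the statement. Likewise $\prod_{\fs\in\bry_n}\theta(t+\mapd(\fs))$ does not come from the forced edges alone: the spider moves produce $\theta$-factors at corner, boundary and interior even faces with multiplicities $1$, $2$ and $4$ respectively, the forced edges remove a factor $\theta^{2}$ at each boundary face, and the contractions remove the remaining interior and boundary contributions, leaving exactly one inverse power of $\theta$ per boundary face. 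Your hedge ``after bookkeeping'' does not cover this, because the bookkeeping you describe assigns the wrong factors to the wrong steps; once the steps are reordered as above, the computation closes exactly as in the paper (with the $n=1$ case, where no contraction occurs, checked separately).
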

As specific instances of the genus 0 case, we re-derive that the number of domino tilings of the Aztec diamond is $2^{\frac{n(n+1)}{2}}$, see Remark~\ref{rem:partition_function_genus0} after Corollary~\ref{cor:genus_0}, and Stanley's formula~\cite{Propp_talk,BYYangPhD}, see Corollary~\ref{cor:Stanley}.

We then explain in Lemma~\ref{lem:inverse_Ktilde} that the formula for $\Ks^{-1}$
for the finite Aztec diamond can be extended in a natural way to an operator
$\Kinv$
on an infinite minimal graph containing the Aztec diamond as a subgraph,
introduced in~\cite{Speyer} and this formula is an inverse for the Fock
Kasteleyn operator on this infinite graph. This operator $\Kinv$ does not belong to
the family of inverses defined in~\cite{BCdT:genusg}. It nevertheless carries a
probabilistic meaning and allows to define a Gibbs measure on dimer
configurations of this infinite graph, as stated in
Proposition~\ref{prop:gibbs}. Here is a less formal version:
\begin{prop}
  The determinantal process on edges given by $\Kinv$ defines a Gibbs measure on the
  dimer configurations of the infinite minimal graph, which is supported on
  configurations which are frozen outside the Aztec region and whose marginals on
  the edges of the Aztec region coincide with the Boltzmann measure given by
  Fock's weights.
\end{prop}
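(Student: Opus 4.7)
The plan is to establish three points in sequence: that the formula with kernel $\Kinv$ defines a bona fide probability measure $\mu$ on dimer configurations of the infinite minimal graph, that $\mu$ is Gibbs for Fock's weights, and that $\mu$ is frozen outside $\Az_n$ while matching the Boltzmann measure inside.

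First, Lemma~\ref{lem:inverse_Ktilde} provides the key algebraic input: $\Kinv$ is an inverse of the Kasteleyn operator $\Ks$ on the infinite minimal graph. The standard Kenyon-type construction then produces cylinder numbers
\[
\mu(e_1,\ldots,e_k) \;=\; \prod_{i=1}^{k}\Ks(\ws_i,\bs_i) \cdot \det\bigl(\Kinv(\bs_j,\ws_i)\bigr)_{1\le i,j\le k},
\]
which are automatically consistent under marginalization over any extra edge, precisely because $\Kinv$ inverts $\Ks$. This consistency also yields the Gibbs property: conditioning on the dimer status of a finite set of edges produces a conditional determinantal kernel obtained from $\Kinv$ by a finite-rank correction, and the resulting conditional cylinder probabilities factor over the local Fock weights on the remaining edges. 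So the only non-algebraic issue is that each such cylinder number be nonnegative.

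Second, for pairs of vertices both lying in $\Az_n$, Lemma~\ref{lem:inverse_Ktilde} identifies $\Kinv$ with the finite inverse $\Ks^{-1}$ of Theorem~\ref{thm:Kinv_intro}. Consequently the marginals of $\mu$ on cylinder events supported entirely inside $\Az_n$ coincide with the Boltzmann measure on the Aztec diamond, which in particular guarantees positivity for such cylinders. Third, and this is the decisive step, I would show that for any edge $e=\ws\bs$ of the infinite graph with an endpoint outside $\Az_n$, the product $\Ks(\ws,\bs)\Kinv(\bs,\ws)$ equals $1$ if $e$ belongs to the canonical frozen pattern associated to Speyer's embedding of $\Az_n$ and $0$ otherwise. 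Combined with the determinantal structure, this forces every cylinder event involving such an edge either to factor through an Aztec-interior cylinder (preserving positivity by the previous step) or to vanish, giving both the frozen-outside support and positivity of all cylinder numbers. The mechanism is contour analysis of~\eqref{eq:Kinv_intro}: for $\bs,\ws$ outside $\Az_n$ the local kernel functions $g_{\bs,0}$ and $g_{0,\ws}$, together with the product $\prod_j E(\beta_j,u)E(\delta_j,v)/(E(\delta_j,u)E(\beta_j,v))$, either allow $\C_1$ and $\C_2$ to be deformed through the integrand with no enclosed pole (yielding $0$), or capture exactly one residue whose value telescopes against $\Ks(\ws,\bs)$ to produce $1$.

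The main obstacle is carrying out this contour surgery explicitly. One must track the zeros and poles of the integrand at the angles $\alpha_j,\beta_j,\gamma_j,\delta_j$ as $\bs$ and $\ws$ range over the various connected components of the complement of $\Az_n$, and match the surviving residues with the combinatorics of Speyer's frozen configuration; a subsidiary difficulty is that the indicator term $\II_{\{\bs\text{ right of }\ws\}}$ in~\eqref{eq:Kinv_intro} must be kept compatible with the freezing pattern. Once these identifications are made, Gibbsianity and the interior marginals pin down $\mu$ as the unique measure described in the proposition.
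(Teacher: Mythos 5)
Your overall strategy --- extend $\Ks^{-1}$ to an operator $\Kinv$ on the infinite graph, check that it inverts $\tilde{\Ks}$, identify the marginals inside $\Az_n$ with the Boltzmann measure, compute the entries for edges outside, then invoke Kolmogorov extension and a DLR check --- is the same as the paper's. Your ``decisive'' single-edge computation is also correct in content: the paper's Lemma~\ref{lem:inverse_Ktilde} shows that $\Kinv_{\bs,\ws}=1/\tilde{\Ks}_{\ws,\bs}$ when $\ws\bs$ is an icy edge of a quadrant, and that $\Kinv_{\bs,\ws}=0$ for the other edges of the quadrants and for the edges joining $\Az_n$ to a quadrant.

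The genuine gap is the inference from these single-edge values to the factorization of arbitrary cylinder determinants. Knowing $\tilde{\Ks}_{\ws,\bs}\Kinv_{\bs,\ws}\in\{0,1\}$ for each outside edge says nothing about the off-diagonal entries $\Kinv_{\bs_i,\ws_j}$, $i\neq j$, which enter $\det(\Kinv_{\bs_i,\ws_j})$ and are genuinely nonzero in general (within the light cone of a quadrant, and in the $\star$, $\ast$, $\diamond$ blocks of~\eqref{eq:blocksA}). The familiar argument ``an event of probability one splits off from the determinant'' is only available once you already know the cylinder numbers are probabilities of a point process, which is precisely what is being proved --- and the kernel is not Hermitian, so the general theory of determinantal processes does not apply either. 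The paper closes this gap by establishing the \emph{full} block structure of $\Kinv$: the vanishing of whole blocks between the different regions and, crucially, the triangular structure of each diagonal quadrant block with respect to the light-cone partial order. After ordering the edges compatibly with the light cones, the determinant is expanded first along the columns indexed by white vertices of $Q_N\cup Q_S$ and then along the rows indexed by black vertices of $Q_W\cup Q_E$, picking up only the diagonal entries $\II_{\{\text{icy}\}}/\tilde{\Ks}_{\ws_i,\bs_i}$ and leaving the $\Az_n$-block determinant; this gives simultaneously the frozen support, the interior marginals, and positivity. Your contour surgery would therefore have to deliver this whole matrix structure, not just the diagonal products; note also that the paper obtains the entries within the quadrants not by deforming contours in the extended integral formula but by propagating the known zero boundary values through the linear relations $\sum_{\bs}\tilde{\Ks}_{\ws',\bs}\Kinv_{\bs,\ws}=\delta_{\ws',\ws}$, which is substantially lighter.
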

The slope in the frozen region is not constant. The operator $\Kinv$ is the first
inverse on an infinite minimal graph for which the corresponding probability
measure exhibits different phases (solid/liquid/gas) depending on the location
on the same infinite graph.

We end this paper with a section with application to the computation of limit
shapes for the Aztec diamond with weights coming from surfaces of genus 0, 1 or
higher. We give in particular a short derivation of the arctic ellipse theorem
for 1-periodic weights~\cite{JPS98,Johansson}. We also explain how to extend
results by Berggren and Borodin from~\cite{BerggrenBorodin} obtained under some
technical assumption to a more general context, see also the forthecoming paper~\cite{BobenkoBobenkoSuris}. 

\subsection*{Outline of the paper}

\begin{itemize}
  \item In Section~\ref{sec:Def}, we recall the definition of the Aztec diamond
    graph, and give the necessary geometric definitions to define Fock's weights for the 
    dimer model. We then discuss some known examples which fit this framework in
    genus 0 and 1.
    Proposition~\ref{prop:reconstruction} establishes that Fock's weights are gauge
    equivalent to any choice of positive weights on the edges.
  \item In Section~\ref{sec:inverse_Kasteleyn}, we state and give a proof
    of~Theorem~\ref{thm:Kinv} giving
    a compact, explicit formula for the inverse Kasteleyn matrix.
  \item Section~\ref{sec:partition} is devoted to results about the partition
    function of the dimer model. It contains the statement and the proof of
    Theorem~\ref{thm:partition_function} showing a product formula for the
    partition function. Application to the genus 0 case and Stanley's
    formula~\cite{Propp_talk,BYYangPhD} are given.
  \item In Section~\ref{sec:aztec_minimal}, we discuss the extension of
    $\Ks^{-1}$ to an infinite minimal graph. Lemma~\ref{lem:inverse_Ktilde}
    gives the structure of this inverse, and Proposition~\ref{prop:gibbs}
    describes the corresponding Gibbs measure.
  \item Section~\ref{sec:limit_shapes} is a discussion of applications of the formula for
    $\Ks^{-1}$ to derive limit shapes and limit of local statistics in the
    thermodynamical limit, recovering and extending several well-known results
    on the topic.
\end{itemize}

\subsection*{Acknowledgments}
We thank Tomas Berggren for interesting discussions about the dimer model on the Aztec
diamond and for explaining some technical details of~\cite{BerggrenBorodin}.
The authors are partially supported by the \emph{DIMERS}
project~ANR-18-CE40-0033 funded by the French National Research Agency.
Part of this research was performed while the
first author was visiting the Institute for Pure and Applied Mathematics (IPAM),
which is supported by the National Science Foundation (Grant No. DMS-1925919).

\section{Definitions and first result} \label{sec:Def}

This section is devoted to introducing and motivating the framework of this
paper. In Section~\ref{sec:Aztec_dimer}, we define the dimer model on the Aztec
diamond, and in Section~\ref{sec:Fock} we introduce Fock's
weights~\cite{Fock,BCdT:genusg}. Then, in Section~\ref{sec:example_0_1} we
explicitly treat the genus 0, resp. genus 1, case and prove how to recover
Stanley's weights~\cite{Propp_talk,BYYangPhD}, resp. the biased $2\times 2$ periodic weights of Borodin-Duits~\cite{BorodinDuits}. Finally, in Section~\ref{sec:space_parameters}, we prove that all dimer models on the Aztec diamond can be parameterized using Fock's weights, implying that we actually study the most general setting of the dimer model on the Aztec diamond.

\subsection{Dimer model on the Aztec diamond}\label{sec:Aztec_dimer}

\paragraph{Aztec diamond.} The \emph{Aztec diamond of size $n$}, denoted by $\Az_n=(\Vs_n,\Es_n)$, is a subgraph of $\ZZ^2$ rotated by $\pi/4$ made of $n$ rows and $n$ columns, see Figure~\ref{fig:fig_Aztec}. This graph is bipartite and the set of vertices $\Vs_n$ is naturally split into black and white: $\Vs_n=\Bs_n\sqcup \Ws_n$. 

\begin{figure}[h]
  \centering
  \begin{overpic}[width=6cm]{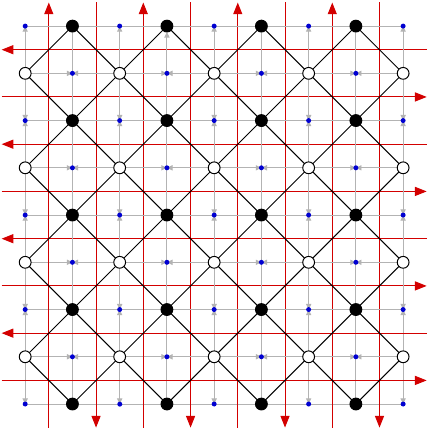}
  \put(-6,11){\scriptsize $\alpha_1$} 
  \put(-6,22){\scriptsize $\beta_1$} 
  \put(8,-4){\scriptsize $\gamma_1$} 
  \put(21,-5){\scriptsize $\delta_1$} 
  \put(-6,32){\scriptsize $\alpha_2$} 
  \put(-6,43){\scriptsize $\beta_2$} 
  \put(32,-4){\scriptsize $\gamma_2$} 
  \put(43,-5){\scriptsize $\delta_2$} 
  \put(-6,55){\scriptsize $\alpha_3$} 
  \put(-6,66){\scriptsize $\beta_3$} 
  \put(53,-4){\scriptsize $\gamma_3$} 
  \put(64,-5){\scriptsize $\delta_3$} 
  \put(-6,77){\scriptsize $\alpha_4$} 
  \put(-6,88){\scriptsize $\beta_4$} 
  \put(75,-4){\scriptsize $\gamma_4$} 
  \put(86,-5){\scriptsize $\delta_4$} 
  \end{overpic}
\caption{Aztec diamond $\Az_n$ with $n=4$. The diamond graph $\Azdiam_n$ is represented with light grey lines, the dual vertices with blue circles, and the four families of oriented train-tracks in red. This figure also sets the notation for the angles $\alphab=(\alpha_j)_{j=1}^n,\dots,\deltab=(\delta_j)_{j=1}^n$ assigned to train-tracks.}\label{fig:fig_Aztec}  
\end{figure}

The \emph{set of faces} of $\Az_n$, denoted by $\Fs_n$, consists of the inner faces and the boundary faces, where the latter correspond to all faces of the rotated $\ZZ^2$ adjacent to the boundary of $\Az_n$. To every face of $\Fs_n$, one assigns a vertex generically denoted by $\fs$; it can be seen as a vertex of the dual graph of $\As_n$ modified along the boundary, see Figure~\ref{fig:fig_Aztec} (small blue circles), hence $\fs$ will be referred to as \emph{face} or \emph{dual vertex}. 

The Aztec diamond $\Az_n$ comes with a natural coordinate system: the dual vertex on the bottom left is the origin $0=(0,0)$, every white vertex can be written as $\ws=(\ws_x,\ws_y)$, with $\ws_x\in\{0,2,\dots,2n\}$, $\ws_y\in\{1,3,\dots,2n-1\}$, every black vertex can be written as $\bs=(\bs_x,\bs_y)$, with $\bs_x\in\{1,3,\dots,2n-1\}$, $\bs_y\in\{0,2,\dots,2n\}$, every dual vertex $\fs=(\fs_x,\fs_y)$ either has both even coordinates, or both odd coordinates. 
 
The vertex set of the \emph{diamond graph} $\Azdiam_n$ consists of the dual vertices of $\Fs_n$ and of the vertices of $\Vs_n$,  its edges are obtained by joining every vertex of $\Fs_n$ to the vertices of $\Vs_n$ on the boundary of the face it corresponds to, see Figure~\ref{fig:fig_Aztec} (light grey); the diamond graph is made of quadrilaterals. A \emph{train-track} of $\Az_n$ is a maximal path crossing edges of the diamond graph so that when it enters a quadrilateral, it exits through the opposite side. Since the graph $\Az_n$ is bipartite, each train-track has a natural orientation such that white, resp. black, vertices of $\Az_n$, are on its left, resp. right; we let $\vec{\T}_n$ denote the set of oriented train-tracks, see Figure~\ref{fig:fig_Aztec} (red). Train-tracks of $\vec{\T}_n$ come in four families: two sets of horizontal train-tracks with left-to-right or right-to-left orientation, and two sets of vertical train-tracks with bottom-to-top or top-to-bottom orientation. 
 
\paragraph{Dimer model.} A \emph{dimer configuration} of $\Az_n$ is a subset $\Ms$ of edges such that each vertex of $\Az_n$ is incident to exactly one edge of $\Ms$; we let $\M(\Az_n)$ denote the set of dimer configurations of $\Az_n$. Assume a positive weight function $\nu$ is assigned to edges of $\Az_n$. The \emph{dimer Boltzmann measure}, denoted by $\PP$, is the probability measure on $\M(\Az_n)$ defined by
\begin{equation*}
\forall\, \Ms\in \M(\Az_n),\quad \PP(\Ms)=\frac{\nu(\Ms)}{Z(\nu)}, 
\end{equation*}
where $\nu(\Ms)=\prod_{\es\in\Ms}\nu_e$, and $Z(\nu)=\sum_{\Ms\in\M(\Az_n)} \nu(M)$ is the normalizing constant known as the \emph{partition function}. 

Suppose that we have two positive weight functions $\nu,\tilde{\nu}$ assigned to edges of $\Az_n$. The corresponding dimer models are said to be \emph{gauge equivalent} \cite{KOS}, if there exists a function $\psi$ defined on $\Vs_n$ such that, for every edge $\ws\bs$ of $\Es_n$, we have $\tilde{\nu}_{\ws\bs}=\psi(\ws)\nu_{\ws\bs}\psi(\bs)$. Two gauge equivalent dimer models yield the same dimer Boltzmann measure. An equivalent useful way of defining gauge equivalence is the following: consider an inner face $\fs$ of degree $2k$ of $\Az_n$, and denote by $\ws_1,\bs_1,\dots,\ws_k,\bs_k$ its boundary vertices in counterclockwise order, then the \emph{face weight} $\Wscr_\fs(\nu)$ of $\fs$ is defined as the alternate product of the weights:
\begin{align*}
\Wscr_\fs(\nu)=\prod_{j=1}^k \frac{\nu_{\ws_j\bs_j}}{\nu_{\ws_j\bs_{j-1}}},
\end{align*}
using cyclic notation for vertices. Then, two weight functions $\nu,\tilde{\nu}$ are gauge equivalent if and only if, for every inner face $\fs$ of $\Az_n$, $\Wscr_\fs(\nu)=\Wscr_\fs(\tilde{\nu})$~\cite{KOS}. As a consequence of~\cite{KOS}, we have an explicit relation between partition functions associated to gauge equivalent weight functions $\nu$ and $\tilde{\nu}$. 
\begin{equation}\label{equ:partition_gauge}
Z(\tilde{\nu})=\frac{\tilde{\nu}(\Ms_0)}{\nu(\Ms_0)}\,Z(\nu),
\end{equation}
where $\Ms_0$ is any fixed dimer configuration of $\Az_n$. 

A key tool for studying the dimer model is the \emph{Kasteleyn matrix}~\cite{Kasteleyn1,TF,Percus}. Instead of defining the original Kasteleyn matrix which uses orientation of the edges, we define a modified version, introduced by Kuperberg~\cite{Kuperberg}, that uses complex phases assigned to edges. Suppose that every edge $\ws\bs$ of $\Es_n$ is assigned a modulus one complex number $\phi_{\ws\bs}$ such that, for every inner face $\fs$ of degree $2k$ of $\Az_n$, the \emph{Kasteleyn condition} is satisfied:
\begin{align}\label{equ:Kast_orientation}
\prod_{j=1}^k \frac{\phi_{\ws_j\bs_j}}{\phi_{\ws_j\bs_{j-1}}}=(-1)^{k+1}.
\end{align}
Then, the associated \emph{Kasteleyn matrix} $\Ks$ is the corresponding weighted adjacency matrix: rows, resp. columns, of $\Ks$ are indexed by white, resp. black, vertices of $\As_n$; non-zero coefficients of $\Ks$ correspond to edges of $\Es_n$ and, for every edge $\ws\bs$, the coefficient $\Ks_{\ws,\bs}$ is defined by
\begin{align*}
\Ks_{\ws,\bs}=\phi_{\ws\bs}\nu_{\ws\bs}.  
\end{align*}
Two key results of the dimer model are the expression of the partition function and of the dimer Boltzmann measure using the determinant of the matrix $\Ks$ and its inverse. More precisely, we have
\begin{thm}\cite{Kasteleyn1,TF,Percus,Kuperberg}\label{thm:partition_Kast}
The dimer partition function is equal to
\begin{align*}
Z(\nu)=|\det(\Ks)|. 
\end{align*}
\end{thm}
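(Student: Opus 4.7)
The strategy is the classical Kasteleyn--Kuperberg identity. Since $\Az_n$ is balanced bipartite, $\Ks$ is square, and expanding the determinant
$$\det(\Ks) = \sum_\sigma \mathrm{sgn}(\sigma)\prod_{\ws\in\Ws_n}\Ks_{\ws,\sigma(\ws)}$$
over bijections $\sigma\colon \Ws_n \to \Bs_n$, only those $\sigma$ for which each $\{\ws,\sigma(\ws)\}$ is an edge of $\Az_n$ contribute. These terms are in one-to-one correspondence with dimer configurations $\Ms_\sigma\in \M(\Az_n)$, each contributing $\mathrm{sgn}(\sigma)\prod_{\ws\bs\in \Ms_\sigma}\phi_{\ws\bs}\nu_{\ws\bs}$.

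To obtain $|\det(\Ks)|=Z(\nu)$, it suffices to prove that all these contributions share a common unit-modulus argument. I would fix a reference configuration $\Ms_0$ with bijection $\sigma_0$, and for another $\Ms$ consider the symmetric difference $\Ms\triangle \Ms_0$. Since each vertex is saturated once by each of $\Ms,\Ms_0$, this symmetric difference is a disjoint union of simple cycles $C_1,\dots,C_r$ alternating between $\Ms$- and $\Ms_0$-edges, each of some even length $2k_i$. Parametrising $C_i$ as $\ws_1\bs_1\ws_2\bs_2\cdots\ws_{k_i}\bs_{k_i}$ with $\Ms$-edges $\ws_j\bs_j$ and $\Ms_0$-edges $\ws_j\bs_{j-1}$, one checks that $\sigma\circ\sigma_0^{-1}$ acts on $\Bs_n$ as the product of the corresponding $k_i$-cycles, so $\mathrm{sgn}(\sigma)/\mathrm{sgn}(\sigma_0)=\prod_i (-1)^{k_i+1}$. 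It remains to show that the phase ratio along each $C_i$ also equals $(-1)^{k_i+1}$, so that signature and phase cancel cycle by cycle.

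The crux is therefore a face-to-cycle propagation lemma: for every simple cycle $C$ of length $2k$ enclosing a region containing $V$ interior vertices,
$$\prod_{j=1}^{k}\frac{\phi_{\ws_j\bs_j}}{\phi_{\ws_j\bs_{j-1}}} = (-1)^{k+1+V}.$$
This is proved by induction on the number of enclosed faces, with base case given exactly by~\eqref{equ:Kast_orientation} and inductive step obtained by splitting the interior along a path shared by two neighbouring subregions, after checking that the phases along the shared edges cancel in the alternating product while the parities of boundary length and interior vertex count combine correctly. The final ingredient is the parity observation that all vertices of $C_i$ are matched among themselves by both $\Ms$ and $\Ms_0$, so the interior of $C_i$ inherits perfect matchings from each; since $\Az_n$ is bipartite, the interior contains equal numbers of white and black vertices, forcing $V$ to be even and reducing the product to $(-1)^{k_i+1}$ as required.

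I expect the main technical obstacle to be the face-by-face inductive step in the propagation lemma, which requires careful planarity bookkeeping to control how the phase factors along a cutting path cancel between the two subregions; the rest is routine manipulation of signatures and symmetric differences. Once all cycles contribute trivially, every $\Ms$ produces the same unimodular prefactor $\mathrm{sgn}(\sigma_0)\prod_{\ws\bs\in \Ms_0}\phi_{\ws\bs}$ times $\nu(\Ms)$, and summing over $\Ms\in\M(\Az_n)$ yields $\det(\Ks) = c\cdot Z(\nu)$ with $|c|=1$, whence $Z(\nu)=|\det(\Ks)|$.
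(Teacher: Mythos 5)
The paper does not prove this statement: it is quoted as a classical result with references to Kasteleyn, Temperley--Fisher, Percus and Kuperberg, so there is no in-paper proof to compare against. Your sketch is precisely the standard Kasteleyn--Kuperberg argument from those references (determinant expansion over matchings, symmetric-difference cycles, sign of a $k_i$-cycle equal to $(-1)^{k_i+1}$, propagation of the face condition~\eqref{equ:Kast_orientation} to $(-1)^{k+1+V}$ for a cycle enclosing $V$ vertices, and evenness of $V$ from the induced perfect matching of the interior), and it is correct in outline; the only step left at sketch level, the face-by-face induction for the propagation lemma, is exactly the standard one and the statement you give for it is the right one.
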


\begin{thm}\cite{Kenyon1}\label{thm:measure_Kenyon}
The probability of all dimer configurations containing a fixed subset of edges $\{\es_1=\ws_1\bs_1,\dots,\es_k=\ws_k\bs_k\}$ of $\Es_n$ is explicitly given by
\begin{align*}
\PP(\es_1,\dots,\es_k)=\prod_{j=1}^k \Ks_{\ws_j,\bs_j}\det \bigl(\Ks^{-1}_{\bs_i,\ws_j}\bigr)_{1\leq i,j\leq k}.
\end{align*}
\end{thm}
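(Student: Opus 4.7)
My plan is to compute the probability as a ratio of partition functions and then reduce this ratio to a $k \times k$ determinant in $\Ks^{-1}$ via the Jacobi complementary-minor identity.

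First, I would write
\begin{align*}
\PP(\es_1,\dots,\es_k)=\frac{1}{Z(\nu)}\sum_{\Ms\in\M(\Az_n),\,\Ms\supseteq\{\es_1,\ldots,\es_k\}}\nu(\Ms)
=\frac{\prod_{j=1}^k \nu_{\ws_j\bs_j}}{Z(\nu)}\, Z',
\end{align*}
where $Z'$ denotes the partition function of the dimer model on the graph $\Az_n'$ obtained from $\Az_n$ by deleting the endpoints $\ws_1,\bs_1,\dots,\ws_k,\bs_k$ and all their incident edges (with inherited edge weights). The remaining graph is still bipartite, and the phases inherited from $\phi$ still satisfy the Kasteleyn condition~\eqref{equ:Kast_orientation} on every inner face of $\Az_n'$, since every such face is also an inner face of $\Az_n$. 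The associated Kasteleyn matrix $\Ks'$ is therefore nothing other than the submatrix of $\Ks$ obtained by deleting the rows $\ws_1,\dots,\ws_k$ and columns $\bs_1,\dots,\bs_k$.

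Next, applying Theorem~\ref{thm:partition_Kast} to both $\Az_n$ and $\Az_n'$, I get $Z(\nu)=|\det\Ks|$ and $Z'=|\det\Ks'|$, so that
\begin{align*}
\PP(\es_1,\dots,\es_k)=\prod_{j=1}^k \nu_{\ws_j\bs_j}\cdot\frac{|\det\Ks'|}{|\det\Ks|}.
\end{align*}
The heart of the proof is then the Jacobi identity for complementary minors: for an invertible matrix $A$ with row and column index sets of equal cardinality, and for subsets $I$ of rows and $J$ of columns of the same size,
\begin{align*}
\det A\bigl[\hat{I}\,|\,\hat{J}\bigr]=(-1)^{\sigma(I,J)}\det(A)\,\det A^{-1}\bigl[J\,|\,I\bigr],
\end{align*}
where $\sigma(I,J)$ is an explicit sign depending only on the positions of $I,J$. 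Applied to $A=\Ks$, $I=\{\ws_1,\dots,\ws_k\}$ and $J=\{\bs_1,\dots,\bs_k\}$, this yields
\begin{align*}
\det\Ks'=\pm\det(\Ks)\,\det\bigl(\Ks^{-1}_{\bs_i,\ws_j}\bigr)_{1\leq i,j\leq k}.
\end{align*}

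Finally I would combine the two displays: the moduli of $\det\Ks$ cancel, and since $|\nu_{\ws_j\bs_j}|=|\Ks_{\ws_j,\bs_j}|$, the complex phases $\phi_{\ws_j\bs_j}$ extracted from each $\Ks_{\ws_j,\bs_j}$ combine with the $k\times k$ determinant to give exactly $\prod_{j=1}^k\Ks_{\ws_j,\bs_j}\det(\Ks^{-1}_{\bs_i,\ws_j})$, up to a global sign that is absorbed by the fact that $\PP$ is a nonnegative real number. The main potential obstacle is sign bookkeeping: one must verify that the sign in Jacobi's identity, the sign arising from Kasteleyn's theorem applied to the reduced graph, and the product of the Kuperberg phases $\phi_{\ws_j\bs_j}$ all combine into a single positive real, which can be done by expanding $\det(\Ks_{\ws_j,\bs_j}\Ks^{-1}_{\bs_i,\ws_j})$ and matching it with the ratio of absolute values obtained above.
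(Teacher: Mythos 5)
The paper gives no proof of this theorem: it is imported verbatim from Kenyon's work \cite{Kenyon1}, so there is nothing internal to compare against. Your argument is in fact the standard proof of that result, and its skeleton is sound: reduce the probability to $\prod_j\nu_{\ws_j\bs_j}\,Z'/Z(\nu)$ where $Z'$ is the partition function of the graph with the matched vertices deleted, identify $Z'$ with $|\det\Ks'|$ for the complementary submatrix $\Ks'$, and finish with Jacobi's complementary-minor identity.

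Two steps need repair. First, your justification that the Kasteleyn condition survives the deletion --- ``every inner face of $\Az_n'$ is also an inner face of $\Az_n$'' --- is false: deleting the vertices $\ws_j,\bs_j$ together with their incident edges merges the surrounding faces into larger ones. The conclusion you want is still true, but it requires the short computation that when two faces of degrees $2k_1$ and $2k_2$ sharing an edge are merged, the two alternating products multiply, the shared edge cancels, and $(-1)^{k_1+1}(-1)^{k_2+1}=(-1)^{(k_1+k_2-1)+1}$ is exactly the value required for the merged face of degree $2(k_1+k_2-1)$; iterating this over all deleted edges handles the faces created around each deleted vertex. Second, the final sign step is the only genuinely incomplete point: from $\PP(\es_1,\dots,\es_k)=\prod_j\nu_{\ws_j\bs_j}\bigl|\det(\Ks^{-1}_{\bs_i,\ws_j})\bigr|$ and the nonnegativity of $\PP$ you cannot conclude that $\prod_j\Ks_{\ws_j,\bs_j}\det(\Ks^{-1}_{\bs_i,\ws_j})$ equals $\PP$ rather than a unimodular multiple of it; ``absorbed by the fact that $\PP$ is nonnegative'' is not a deduction. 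The clean way to close this, which you only hint at, is to use the Kasteleyn condition twice: all matchings contribute to $\det\Ks$ with a common phase $c$, so $\det\Ks=cZ(\nu)$, and likewise $\det\Ks'=c'Z'$. Comparing the sub-sum of $\det\Ks$ over permutations sending each $\ws_j$ to $\bs_j$ (which on one hand equals $c\prod_j\nu_{\ws_j\bs_j}Z'$, and on the other equals $(-1)^{\sigma(I,J)}\prod_j\Ks_{\ws_j,\bs_j}\det\Ks'$ by the cofactor expansion) shows that the Jacobi sign and the Kuperberg phases cancel identically, not merely in modulus, and the stated formula follows exactly. With these two repairs your proof is complete.
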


\subsection{Fock's dimer model on the Aztec diamond}\label{sec:Fock}

We now turn to the definition of \emph{Fock's weights}~\cite{Fock} underlying the dimer model of interest to this paper. We only highlight the main tools needed, more details can be found in the paper~\cite{BCdT:genusg} providing a thorough study of this model in the case of infinite minimal graphs. We first need some tools from Riemannian geometry. 

\paragraph{M-curves.} Let $\Sigma$ be an \emph{M-curve}, that is a compact Riemann surface endowed with an anti-holomorphic involution $\sigma$ whose set of fixed points is given by $g+1$ topological circles, where $g$ is the genus of $\Sigma$. Fix $x_0$ a real point of $\Sigma$ and denote by $A_0$ the corresponding real component and by $A_1,\dots,A_g$ the remaining ones. The real locus separates $\Sigma$ into two connected surfaces with boundary $\Sigma^-,\Sigma^+$, and we fix an orientation of the real locus so that the boundary of $\Sigma^+$ is equal to $A_0-(A_1+\dots+A_g)$. We use the same notation $A_j$ for the oriented cycle in $\Sigma$ and its homology class in $H_1(\Sigma;\ZZ)$.

There are homology classes $B_1,\dots,B_g\in H_1(\Sigma,\ZZ)$ with $\sigma_*(B_i)=-B_i$ such that 
$\{A_1,\dots,$ $A_g,B_1,\dots,B_g\}$ forms a basis of $H_1(\Sigma,\ZZ)$ and satisfies, for all $i,j\in\{1,\dots,g\}$,
\[
A_i\wedge A_j=0,\quad B_i\wedge B_j=0,\quad A_i\wedge B_j=\delta_{i,j}, 
\]
where $\wedge$ denotes the intersection form. 

The complex vector space of holomorphic differential forms has dimension $g$. Denote by $\vec{\omega}=(\omega_1,\dots,\omega_g)$ the basis of this space determined by
\[
\forall\ i,j\in\{1,\dots,g\},\quad \int_{A_i}\omega_j=\delta_{i,j}. 
\]
Let $\Omega$ be the matrix with entries $\Omega_{i,j}=\int_{B_i}\omega_j$. This
matrix is purely imaginary (in our case of M-curves), symmetric and
its imaginary part is positive definite. The \emph{period matrix} $\begin{pmatrix}
I_g& \Omega                                                                                                                                                  \end{pmatrix}
$
generates the full rank lattice $\Lambda=\ZZ^g\oplus \Omega\ZZ^g$ in $\CC^g$. The \emph{Jacobian variety} of $\Sigma$ is defined to be $\Jac(\Sigma)=\CC^g/\Lambda$.

\paragraph{Abel-Jacobi map.} A \emph{divisor} on $\Sigma$ is a formal linear combination of points on $\Sigma$ with integer coefficients. The set of divisors is endowed with a natural grading $\Div(\Sigma)=\bigoplus_{n\in\ZZ}\Div^n(\Sigma)$, where the \emph{degree} of a divisor is the sum of its integer coefficients. A divisor is said to be \emph{principal} if it represents the zeros and the poles of a non-zero meromorphic function $f$ on $\Sigma$; it thus has degree 0. Two divisors are linearly equivalent if their difference is a principal divisor; the set of linear equivalence classes of divisors forms a $\ZZ$-graded Abelian group, denoted by $\Pic(\Sigma)=\bigoplus_{n\in\ZZ}\Pic^n(\Sigma)$. By Abel's theorem, there is an injection, the \emph{Abel-Jacobi map}, from $\Pic^0(\Sigma)$ to $\Jac(\Sigma)$ defined by 
\[
D=\sum_{i} (y_i-x_i)\mapsto \sum_{i}\int_{x_i}^{y_i}\vec{\omega}.
\]
By Jacobi's inversion theorem, this map induces an isomorphism of Abelian groups $\Pic^0(\Sigma)\simeq \Jac(\Sigma)$; following standard practice, we use the same notation for the equivalence class of a degree 0 divisor and for its corresponding element in $\Jac(\Sigma)$. 

\paragraph{Riemann theta functions.} The \emph{Riemann theta function} $\theta(z|\Omega)$ associated to $\Sigma$ is defined by
\[
\forall\, z\in\CC^g,\quad \theta(z|\Omega)=\sum_{n\in\ZZ^g}e^{i\pi(n\cdot\Omega n+2n\cdot z)}. 
\]
For $\thcharp{\delta'}{\delta''}\in\bigl(\frac{1}{2}\ZZ\bigr)^{2g}$, the \emph{theta function with characteristic $\thcharp{\delta'}{\delta''}$}, is defined by 
\[
\theta\thchar{\delta'}{\delta''}(z|\Omega)=\sum_{n\in\ZZ^g}
e^{i\pi[(n+\delta')\cdot\Omega(n+\delta')+2(n+\delta')\cdot(z+\delta'')]},
\]
so that the theta function with $\thcharp{0}{0}$ characteristic is the Riemann theta function. A characteristic $\thcharp{\delta'}{\delta''}$ is said to be \emph{even} (\emph{odd}) if $2\delta\cdot 2\delta'$ is even (odd). A theta function with even (odd) characteristic is an even (odd) function.

\paragraph{Prime form.} The \emph{prime form} is the building block for meromorphic functions on $\Sigma$. We outline the definition and refer to~\cite{ThetaTata2} for more details. 
Consider a non-degenerate theta characteristic, that is a characteristic $\thcharp{\delta'}{\delta''}$ such that $\ud_z \theta\thchar{\delta'}{\delta''}(0)\neq 0$, this implies in particular that it must be odd. Consider also $\xi\thchar{\delta'}{\delta''}$ the square root of the holomorphic form $\ud_z \theta\thchar{\delta'}{\delta''}(0)\cdot\vec{\omega}$. Then, the \emph{prime form} is defined to be: 
\[
\forall\,x,y\in\Sigma, \quad 
E(x,y)=\frac{\theta\thchar{\delta'}{\delta''}(y-x)}{\xi\thchar{\delta'}{\delta''}(x)\xi\thchar{\delta'}{\delta''}(y)},
\]
where $\theta\thchar{\delta'}{\delta''}(y-x):=\theta\thchar{\delta'}{\delta''}(\int_{x}^y \vec{\omega})$. The prime form is independent of the choice of non-degenerate theta characteristic. Note that, as a function, it is only well defined on the universal cover $\widetilde{\Sigma}$ of $\Sigma$: it has well identified quasi-periods along lifts of the cycles $(A_j)$ and $(B_j)$. Its main properties are that it is equal to $0$ if and only if $x=y$, it is skew symmetric and has first order zeros, \cite[p. 3.210]{ThetaTata2}.

\paragraph{Angles and discrete Abel map.} In order to define Fock's weights, we need another type of data related to graph properties of the Aztec diamond. A bipartite graph is said to be \emph{minimal}~\cite{Thurston,GK} if oriented train-tracks of $\vec{\T}$ do not self intersect and do not form parallel bigons, where a \emph{parallel bigon} is a  pair of train-tracks intersecting more than once in the same direction; the Aztec diamond is of course a minimal graph.

To every train-track $T$ of $\vec{\T}_n$, we assign an element $\alpha_T$ of $A_0$, referred to as its \emph{angle}. Let us introduce some notation related to the fact that our set of train-tracks is naturally split into four: $\alphab=(\alpha_j)_{j=1}^n$, resp. $\betab=(\beta_j)_{j=1}^n$ are the angles of the left-to-right, resp. right-to-left, horizontal train-tracks starting from the bottom; and $\gammab=(\gamma_j)_{j=1}^n$, $\deltab=(\delta_j)_{j=1}^n$ are the angles of the bottom-to-top, resp.~top-to-bottom, vertical train-tracks starting from the left, see Figure~\ref{fig:fig_Aztec}. From now on, we suppose that the angles satisfy the following condition: for all $i,j,k,\ell\in\{1,\dots,n\}$, the cyclic order $\alpha_i<\gamma_j<\beta_k<\delta_\ell$ is satisfied on $A_0$. Note that there is no ordering condition on the angles within one of these subsets. By~\cite[Corollary 29]{BCdT:immersion}, this is a necessary and sufficient condition for 
the angles $\alphab,\betab,\gammab,\deltab$ to define a \emph{minimal immersion} of the natural periodic extension of the Aztec diamond. 
We introduce the short notation 
\begin{align}\label{equ:angle_condition}
\alphab <\gammab<\betab<\deltab, 
\end{align}
for angles $\alphab,\betab,\gammab,\deltab$ satisfying the above cyclic ordering on $A_0$.

Following Fock~\cite{Fock}, the~\emph{discrete Abel map}, denoted by $\mapd$, is
a map from the vertices of $\Azdiam_n$ (that is, the dual vertices of $\Fs_n$ and the vertices of $\Vs_n$)
to $\Pic(\Sigma)$, defined as follows: take as reference dual vertex the origin
$0=(0,0)$ and set $\mapd(0)=d$, for some $d\in\Pic^0(\Sigma)$. Then, the
discrete Abel map is defined inductively along edges and is such that along a
directed edge of $\Azdiam_n$ crossing a train-track $T$, the value of $\mapd$
formally increases, resp. decreases, by $\alpha_T$ if one arrives at a black
vertex or leaves a white vertex, resp. leaves a black vertex of arrives at a
white vertex. This map is well defined~\cite{Fock}, and for every vertex $\xs$
of $\Azdiam_n$, the degree $\mapd(\xs)\in \Pic(\Sigma)$ is equal to $1$, resp. 0,
resp. $-1$, at every black, resp. dual, resp. white vertex of $\Azdiam_n$. In particular, for every dual vertex $\fs$, the divisor $\mapd(\fs)$ belongs to $\Pic^0(\Sigma)$, and by~\cite[Lemma 15]{BCdT:genusg}, its image through the Abel-Jacobi map belongs to $(\RR/\ZZ)^g\subset\Jac(\Sigma)$. An example of computation of the discrete Abel map is given in Figure~\ref{fig:fig_genus1} (right).

\paragraph{Fock's Kasteleyn matrix.} Consider a maximal curve $\Sigma$, and angles $\alphab,\betab,\gammab,\deltab$ in $A_0$ assigned to train-tracks of $\Az_n$ satisfying the cyclic condition $\alphab<\gammab<\betab<\deltab$; consider an additional parameter $t\in(\RR/\ZZ)^g\subset \Jac(\Sigma)$. Then, \emph{Fock's Kasteleyn matrix}, denoted by $\Ks$, has rows indexed by white vertices, columns by black ones, and non-zero coefficients defined by, for every edge $\ws\bs$ of $\Az_n$ crossed by two train-tracks with angles $\alpha,\beta$,
\begin{equation}\label{eq:Focks_weight}
\Ks_{\ws,\bs}=\frac{E(\alpha,\beta)}{\theta(t+\mapd(\fs))\theta(t+\mapd(\fs'))},
\end{equation}
where $\fs,\fs'$ are the dual faces adjacent to $\ws\bs$, see Figure~\ref{fig:quad}.

\begin{figure}[ht]
\begin{center}
\begin{overpic}[width=3cm]{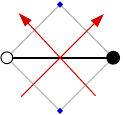}
  \put(-8,45){\scriptsize $\ws$} 
  \put(100,45){\scriptsize $\bs$} 
  \put(48,94){\scriptsize $\fs$} 
  \put(48,-9){\scriptsize $\fs'$} 
  \put(85,82){\scriptsize $\alpha$} 
  \put(8,82){\scriptsize $\beta$} 
\end{overpic}
\caption{Notation used in the definition of Fock's Kasteleyn matrix.}\label{fig:quad}
\end{center}
\end{figure}

Entries of this matrix are complex, but we prove in~\cite[Proposition 31]{BCdT:genusg} that it is a Kasteleyn matrix, where recall that this means that it corresponds to a positive weight function $\nu$ multiplied by a complex phase $\phi$ satisfying the Kasteleyn condition~\eqref{equ:Kast_orientation}. 

\begin{rem}
We have noted that, seen as a function, the prime form $E$ is only defined on the universal cover $\widetilde{\Sigma}$ of $\Sigma$. Nevertheless, in~\cite[Remark 30]{BCdT:genusg}, we prove that for any choice of lifts of the angles $\alpha,\beta$ in the universal cover $\widetilde{A_0}$, the corresponding Kasteleyn operators are gauge equivalent. As a consequence, in the sequel, whenever results are true up to gauge equivalence, the choice of lifts does not matter; and whenever results are not of this type, one should keep in mind that a specific choice of lift has to be made, and that the result is true for any choice of lift. Since these subtle questions have been treated in great detail in~\cite{BCdT:genusg}, we choose not to re-address them here and to consider the Kasteleyn operator as directly defined with parameters on the base surface~$\Sigma$. 
\end{rem}

\subsection{Examples: genus 0 and genus 1 cases}\label{sec:example_0_1}

By way of example, we make the genus 0 and 1 cases explicit and prove that we
recover as specific cases well known models, namely the Aztec diamond with
Stanley's weights (genus 0)~\cite{Propp_talk,BYYangPhD}, and Borodin-Duits' biased $2\times 2$ periodic weights (genus 1)~\cite{BorodinDuits}, see also Figures~\ref{fig:fig_Stanley} and~\ref{fig:fig_genus1} below. 

\begin{figure}[h]
\begin{minipage}{0.48\textwidth}
  \centering
  \begin{overpic}[width=5cm]{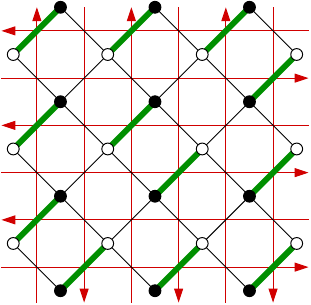}
  \put(-6,11){\scriptsize $\alpha_1$} 
  \put(-6,26){\scriptsize $\beta_1$} 
  \put(8,-4){\scriptsize $\gamma$} 
  \put(24,-5){\scriptsize $\delta$} 
  \put(5,9){\scriptsize $x_1$} 
  \put(27,9){\scriptsize $y_1$} 
  \put(5,28){\scriptsize $z_1$} 
  \put(27,28){\scriptsize $w_1$} 
  \put(36,9){\scriptsize $x_1$} 
  \put(58,9){\scriptsize $y_1$} 
  \put(36,28){\scriptsize $z_1$} 
  \put(58,28){\scriptsize $w_1$} 
  \put(66,9){\scriptsize $x_1$} 
  \put(89,9){\scriptsize $y_1$} 
  \put(66,28){\scriptsize $z_1$} 
  \put(89,28){\scriptsize $w_1$} 
  \put(5,39){\scriptsize $x_2$} 
  \put(27,39){\scriptsize $y_2$} 
  \put(5,58){\scriptsize $z_2$} 
  \put(27,58){\scriptsize $w_2$} 
  \put(36,39){\scriptsize $x_2$} 
  \put(58,39){\scriptsize $y_2$} 
  \put(36,58){\scriptsize $z_2$} 
  \put(58,58){\scriptsize $w_2$} 
  \put(66,39){\scriptsize $x_2$} 
  \put(89,39){\scriptsize $y_2$} 
  \put(66,58){\scriptsize $z_2$} 
  \put(89,58){\scriptsize $w_2$} 
  \put(5,70){\scriptsize $x_3$} 
  \put(27,70){\scriptsize $y_3$} 
  \put(5,89){\scriptsize $z_3$} 
  \put(27,89){\scriptsize $w_3$} 
  \put(36,70){\scriptsize $x_3$} 
  \put(58,70){\scriptsize $y_3$} 
  \put(36,89){\scriptsize $z_3$} 
  \put(58,89){\scriptsize $w_3$} 
  \put(66,70){\scriptsize $x_3$} 
  \put(89,70){\scriptsize $y_3$} 
  \put(66,89){\scriptsize $z_3$} 
  \put(89,89){\scriptsize $w_3$} 
  \end{overpic}
\caption{Aztec diamond with Stanley's weights, and choice of reference dimer configuration $\Ms_0$.}\label{fig:fig_Stanley}  
\end{minipage}
\begin{minipage}{0.04\textwidth}
\end{minipage}
\begin{minipage}{0.48\textwidth}
  \centering
  \begin{overpic}[width=5cm]{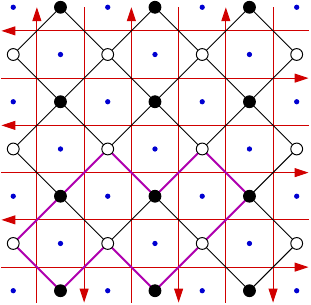}
  \put(-6,11){\scriptsize $\alpha$} 
  \put(-6,26){\scriptsize $\beta$} 
  \put(8,-4){\scriptsize $\gamma$} 
  \put(24,-5){\scriptsize $\delta$} 
  \put(0,0){\textcolor{blue}{\tiny $t$}} 
  \put(30,0){\textcolor{blue}{\tiny $t+\frac{\pi}{2}$}}
  \put(63,0){\textcolor{blue}{\tiny $t$} }
  \put(90,0){\textcolor{blue}{\tiny $t+\frac{\pi}{2}$}} 
  \put(8,8){\tiny $\frac{1}{b}$} 
  \put(25,8){\tiny $a$}
  \put(39,8){\tiny $b$} 
  \put(58,8){\tiny $a$}
  \put(69,8){\tiny $\frac{1}{b}$} 
  \put(88,8){\tiny $a$}
  \put(16,15){\textcolor{blue}{\tiny $t+\rho$} }
  \put(42,15){\textcolor{blue}{\tiny $t+\frac{\pi}{2}+\rho$} }
  \put(75,15){\textcolor{blue}{\tiny $t+\rho$} }
  \put(10,23){\tiny $\frac{a}{b}$} 
  \put(25,23){\tiny $1$}
  \put(40,23){\tiny $ab$} 
  \put(56,23){\tiny $1$}
  \put(71,23){\tiny $\frac{a}{b}$} 
  \put(92,23){\tiny $1$}
  \put(0,30){\textcolor{blue}{\tiny $t+\frac{\pi}{2}$} }
  \put(32,30){\textcolor{blue}{\tiny $t$} }
  \put(60,30){\textcolor{blue}{\tiny $t+\frac{\pi}{2}$} }
  \put(92,30){\textcolor{blue}{\tiny $t$} }
  \put(8,38){\tiny $b$} 
  \put(25,38){\tiny $a$}
  \put(39,38){\tiny $\frac{1}{b}$} 
  \put(58,38){\tiny $a$}
  \put(69,38){\tiny $b$} 
  \put(88,38){\tiny $a$}
  \put(12,45){\textcolor{blue}{\tiny $t+\frac{\pi}{2}+\rho$} }
  \put(44,45){\textcolor{blue}{\tiny $t+\rho$} }
  \put(72,45){\textcolor{blue}{\tiny $t+\frac{\pi}{2}+\rho$} }
  \put(10,54){\tiny $ab$} 
  \put(25,54){\tiny $1$}
  \put(40,54){\tiny $\frac{a}{b}$} 
  \put(56,54){\tiny $1$}
  \put(71,54){\tiny $ab$} 
  \put(92,54){\tiny $1$}
  \put(0,61){\textcolor{blue}{\tiny $t$} }
  \put(30,61){\textcolor{blue}{\tiny $t+\frac{\pi}{2}$} }
  \put(63,61){\textcolor{blue}{\tiny $t$} }
  \put(90,61){\textcolor{blue}{\tiny $t+\frac{\pi}{2}$} }
  \put(8,70){\tiny $\frac{1}{b}$} 
  \put(25,70){\tiny $a$}
  \put(39,70){\tiny $b$} 
  \put(58,70){\tiny $a$}
  \put(69,70){\tiny $\frac{1}{b}$} 
  \put(88,70){\tiny $a$}
  \put(16,76){\textcolor{blue}{\tiny $t+\rho$} }
  \put(42,76){\textcolor{blue}{\tiny $t+\frac{\pi}{2}+\rho$} }
  \put(75,76){\textcolor{blue}{\tiny $t+\rho$} }
  \put(10,84){\tiny $\frac{a}{b}$} 
  \put(25,84){\tiny $1$}
  \put(40,84){\tiny $ab$} 
  \put(56,84){\tiny $1$}
  \put(71,84){\tiny $\frac{a}{b}$} 
  \put(92,84){\tiny $1$}
  \put(0,91){\textcolor{blue}{\tiny $t+\frac{\pi}{2}$} }
  \put(32,91){\textcolor{blue}{\tiny $t$} }
  \put(60,91){\textcolor{blue}{\tiny $t+\frac{\pi}{2}$} }
  \put(92,91){\textcolor{blue}{\tiny $t$} }
  \end{overpic}
\caption{Aztec diamond with biased $2\times 2$ periodic weights~\cite{BorodinDuits}, and the discrete Abel map.}\label{fig:fig_genus1}
\end{minipage}
\end{figure}

\paragraph{Genus 0.}
The underlying M-curve $\Sigma$ is the Riemann sphere $\hat{\CC}$, with
involution $z\mapsto\frac{1}{\bar z}$ and $A_0=S^1=\{z\in\hat{\CC}:\, |z|=1\}$. The prime form $E(u,v)$ is equal to $v-u$, and the Riemann theta function is the constant function $1$, so that the discrete Abel map is not needed. To a point $\alpha\in A_0=S^1$ is assigned in a bijective way an angle $\bar{\alpha}\in\bar{A}_0:=\RR/\pi\ZZ$ defined by $\alpha=e^{2i\bar{\alpha}}$; observe that this bijection preserves the cyclic orders on $A_0$ and $\bar{A}_0$. For $\alpha,\beta\in A_0$, we have $E(\alpha,\beta)=e^{2i\bar{\beta}}-e^{2i\bar{\alpha}}$, and
Fock's Kasteleyn weights are equal to:
 \begin{equation}\label{equ:critical_weights}
\Ks_{\ws,\bs}=\beta-\alpha=e^{2i\bar{\beta}}-e^{2i\bar{\alpha}}=2ie^{i(\bar{\alpha}+\bar{\beta})}\sin(\bar{\beta}-\bar{\alpha}).
 \end{equation}
Up to a factor $i$,
these are Kenyon's critical weights on isoradial graphs introduced in~\cite{Kenyon:crit}.

Let us now prove that, as a specific case, one recovers the dimer model with
\emph{Stanley's weights}~\cite{Propp_talk,BYYangPhD} defined as follows: edges are assigned positive weights $\xb=(x_j)_{j=1}^n,\yb=(y_j)_{j=1}^n, \wb=(w_j)_{j=1}^n,\zb=(z_j)_{j=1}^n$ as in Figure~\ref{fig:fig_Stanley} (left). On Fock's weights side, impose the following condition: choose $\alpha_1,\gamma,\delta\in A_0=S^1$ satisfying the cyclic order 
$\alpha_1<\gamma<\delta$, and for all $j\in\{1,\dots,n\}$, set $\gamma_j=\gamma,\delta_j=\delta$.

\begin{prop}\label{prop:gauge_Stanley}
Suppose that we are given $\xb,\yb,\zb,\wb$ defining Stanley's weights, and consider $\alpha_1,\gamma,\delta\in A_0$ as above. Then, there exists $(\alpha_j)_{j=2}^n,(\beta_j)_{j=1}^n$ such that $\alphab<\gamma<\betab<\delta$ and such that the dimer model with Fock's weights $\alphab,\betab,\gammab\equiv\gamma,\deltab\equiv\delta$ in genus 0 is gauge equivalent to the dimer model with Stanley's weights. 
\end{prop}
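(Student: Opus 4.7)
The strategy is to apply the characterization of gauge equivalence recalled in Section~\ref{sec:Aztec_dimer}: two positive weight functions on $\Az_n$ are gauge equivalent if and only if they induce the same face weight $\Wscr_\fs$ at every inner face $\fs$. Both Stanley's weights and Fock's weights with $\gamma_j\equiv\gamma,\delta_j\equiv\delta$ are invariant under horizontal translation, so the resulting face weights depend only on the row. Enumerating the inner faces of $\Az_n$ yields, for each $j\in\{1,\ldots,n\}$, one ``interior-row $j$'' face-weight equation (for faces bounded below by $\alpha_j$ and above by $\beta_j$) and, for each $j\in\{2,\ldots,n\}$, one ``between rows $j-1,j$'' equation (for faces bounded below by $\beta_{j-1}$ and above by $\alpha_j$). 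Denoting the corresponding Stanley face weights by $R_j$ and $S_j$, this produces a system of $2n-1$ equations in the $2n-1$ unknowns $\bar\beta_1,\bar\alpha_2,\bar\beta_2,\ldots,\bar\alpha_n,\bar\beta_n$, with the convention $\alpha=e^{2i\bar\alpha}$.

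In genus 0, $|E(\alpha,\beta)|=2|\sin(\bar\beta-\bar\alpha)|$, so the Fock face weight of a degree-4 face whose four surrounding train-tracks have angles $T_1,T_2,T_3,T_4$ in cyclic order around the face equals the trigonometric cross-ratio
\[
\frac{|\sin(\bar T_2-\bar T_1)|\,|\sin(\bar T_4-\bar T_3)|}{|\sin(\bar T_3-\bar T_2)|\,|\sin(\bar T_1-\bar T_4)|}.
\]
Applied to the two face types above, this gives equations of the form $F_j(\bar\alpha_j,\bar\beta_j)=R_j$ for each $j\geq 1$ and $G_j(\bar\beta_{j-1},\bar\alpha_j)=S_j$ for each $j\geq 2$, where $F_j,G_j$ are explicit trigonometric cross-ratios.

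The system is triangular and is solved inductively: $F_1(\bar\alpha_1,\cdot)=R_1$ determines $\bar\beta_1$, then $G_2(\bar\beta_1,\cdot)=S_2$ determines $\bar\alpha_2$, then $F_2(\bar\alpha_2,\cdot)=R_2$ determines $\bar\beta_2$, and so on. Each equation has exactly one solution in the open arc of $A_0$ prescribed by $\alphab<\gamma<\betab<\delta$: a direct differentiation yields, for the relevant one-variable cross-ratio $b\mapsto\sin(b-a)/\sin(c-b)$, the derivative $\sin(c-a)/\sin^2(c-b)$ of constant sign on the arc, and the function sweeps all of $(0,+\infty)$ as the unknown traverses the open arc, with limits $0$ and $+\infty$ at the endpoints. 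The main step requiring care is precisely this monotonicity-and-surjectivity argument, which secures both the existence and the uniqueness of each angle in its prescribed arc; once it is established, the cyclic ordering $\alphab<\gamma<\betab<\delta$ is preserved at every step by construction.
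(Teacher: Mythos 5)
Your proposal is correct and follows essentially the same route as the paper: reduce gauge equivalence to matching the two row-dependent types of face weights, observe that this yields a triangular system of $2n-1$ trigonometric cross-ratio equations in $\bar\beta_1,\bar\alpha_2,\dots,\bar\beta_n$, and solve it inductively using the fact that the relevant one-variable ratio of sines sweeps all of $[0,\infty)$ on the prescribed arc. Your explicit monotonicity computation is a small bonus (it gives uniqueness of each angle, which the paper does not claim), but the argument is otherwise the paper's.
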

\begin{proof}
For the purpose of this proof, let us denote by $\nu$ Fock's weight function and by $\tilde{\nu}$ Stanley's one. We are given $\tilde{\nu}$, that is $\xb,\yb,\zb,\wb$, and on Fock's side we are given $\alpha_1,\gamma,\delta\in A_0$ such that $\alpha_1<\gamma<\delta$. We need to prove that there exist $(\alpha_j)_{j=2}^n,(\beta_j)_{j=1}^n$ such that $\alphab<\gamma<\betab<\delta$, and such that $\nu$ and $\tilde{\nu}$ are gauge equivalent. Returning to Section~\ref{sec:Aztec_dimer}, this means that all face weights have to be equal. 

For the weight function $\tilde{\nu}$, there are two kinds of face weights corresponding to inner faces with both odd coordinates, resp. both even coordinates
\begin{align*}
\forall\, j\in\{1,\dots,n\}, \quad \frac{x_j w_j}{y_j z_j}, \text{ resp.}\quad \forall\,j\in\{1,\dots,n-1\},\quad & \frac{y_{j+1}z_j}{x_{j+1}w_j}.
\end{align*}
Note that all face weights along a given row are equal. 

Returning to the definition of Fock's weights with the above specification in the genus $0$ case, see Equation~\eqref{equ:critical_weights}, we want to have the following equalities
\begin{equation}\label{equ:gauge_Stanley}
\begin{aligned}
\forall\, j\in\{1,\dots,n\}, \quad &\frac{x_j w_j}{y_j z_j}=\frac{|\sin(\bar{\gamma}-\bar{\alpha}_j)|}{|\sin(\bar{\delta}-\bar{\alpha}_j)|}\frac{|\sin(\bar{\delta}-\bar{\beta}_j)|}{|\sin(\bar{\beta}_j-\bar{\gamma})|}\\
\forall\,j\in\{1,\dots,n-1\},\quad & \frac{y_{j+1}z_j}{x_{j+1}w_j}=\frac{|\sin(\bar{\delta}-\bar{\alpha}_{j+1})|}{|\sin(\bar{\gamma}-\bar{\alpha}_{j+1})|}\frac{|\sin(\bar{\beta}_j-\bar{\gamma})|}{|\sin(\bar{\delta}-\bar{\beta}_j)|}.
\end{aligned}
\end{equation}
The main tool we use is that, for all $\bar{u},\bar{v}\in \bar{A}_0$, 
the function $\bar{x}\in \bar{A}_0\mapsto \frac{|\sin(\bar{x}-\bar{u})|}{|\sin(\bar{x}-\bar{v})|}$ is non-negative, has a zero at $\bar{x}=\bar{u}$, a pole at $\bar{x}=\bar{v}$, is continuous except at $\bar{x}=\bar{v}$, and takes all values in $[0,\infty)$ on the intervals $[\bar{u},\bar{v}),(\bar{v},\bar{u}]$ of $\bar{A}_0$.

Fix any $\alpha_1,\gamma,\delta$ satisfying $\alpha_1<\gamma<\delta$. Then, by the above there exists $\beta_1\in(\gamma,\delta)$ such that $\frac{x_1 w_1}{y_1 z_1}=\frac{|\sin(\bar{\gamma}-\bar{\alpha}_1)|}{|\sin(\bar{\delta}-\bar{\alpha}_1)|}\frac{|\sin(\bar{\delta}-\bar{\beta}_1)|}{|\sin(\bar{\beta}_1-\bar{\gamma})|}$. Using the same argument, there exists $\alpha_2\in(\delta,\gamma)$ such that 
$\frac{y_{2}z_1}{x_{2}w_1}=\frac{|\sin(\bar{\delta}-\bar{\alpha}_{2})|}{|\sin(\bar{\gamma}-\bar{\alpha}_{2})|}\frac{|\sin(\bar{\beta}_1-\bar{\gamma})|}{|\sin(\bar{\delta}-\bar{\beta}_1)|}$.
Again, using this argument, there exists $\beta_2\in(\gamma,\delta)$ such that $\frac{x_2 w_2}{y_2 z_2}=\frac{|\sin(\bar{\gamma}-\bar{\alpha}_2)|}{|\sin(\bar{\delta}-\bar{\alpha}_2)|}\frac{|\sin(\bar{\delta}-\bar{\beta}_2)|}{|\sin(\bar{\beta}_2-\bar{\gamma})|}$, and we continue determining $\alpha_3$ using the second equation, etc. To solve these equations we need $2n-1$ parameters $(\alpha_j)_{j=2}^n,(\beta_j)_{j=1}^n$, and we can choose them so that $\alphab<\gamma<\betab<\delta$.   
\end{proof}

\begin{rem}
  The fact that there are three parameters (we chose here $\alpha_1$, $\gamma$ and
  $\delta$) that we can fix to arbitrary values is a consequence of the fact
  that the probability measure is invariant if we apply to all parameters a
  Möbius transform preserving the unit circle, and such Möbius transformations
  are transitives on triple of points. See e.g.\ \cite[Section~5]{KO:Harnack},
  and Section~\ref{sec:arctic_ellipse} of the present article.
\end{rem}

\paragraph{Genus 1.} The M-curve $\Sigma$ is the complex torus
$\TT(\tau)=\CC/(\ZZ+\tau\ZZ)$, for some modular parameter $\tau \in i\RR^+$,
where the involution is given by the complex conjugation and $A_0=\RR/\ZZ$. The theta function $\theta(u|\Omega)$ is the Jacobi theta function $\theta_3(\pi u|\tau)$, see~\cite[Equation (1.2.13)]{Lawden}, and the prime form $E(u,v)$ is equal to $\frac{\theta_1(\pi(v-u)|\tau)}{\pi\theta_1'(0)}$, where $\theta_1$ is the rescaled version of the theta function with characteristic $\thcharp{1/2}{1/2}$. Whenever no confusion occurs, the reference to $\tau$ is omitted in the notation of the theta functions.
As a consequence, Fock's Kasteleyn weights are equal to
\begin{equation}\label{equ:genus1_weights}
\Ks_{\ws,\bs}=\frac{\theta_1(\pi(\beta-\alpha))}{\pi\theta_1'(0)\theta_3(\pi(t+\mapd(\fs)))\theta_3(\pi(t+\mapd(\fs')))}.
\end{equation}

We now prove that as a specific case one recovers the \emph{biased $2\times 2$ periodic} dimer model studied by Borodin and Duits~\cite{BorodinDuits}, defined as follows. Consider two parameters $a>0, b\in(0,1]$; the Aztec diamond is naturally made of $2n$ rows of edges. Row weights repeat each four rows; a pattern of four rows is given by: $(\frac{1}{b},a,b,a \text{ + horiz. repetitions})$, $(\frac{a}{b},1,ab,1\text{ + horiz. repetitions})$, $(b,a,\frac{1}{b},a$\text{ + horiz. repetitions})$, (ab,1,\frac{a}{b},1\text{ + horiz. repetitions})$, see Figure~\ref{fig:fig_genus1} (right). Note that if the Aztec diamond has odd size, the last two columns and rows contain only half of the pattern; note also that the weights come in periods of $2\times 2$, circled in magenta in Figure~\ref{fig:fig_genus1} (right), hence the name. For Fock's weights, impose the following preliminary conditions: for all $j\in\{1,\dots,n\}$, set $\alpha_j=\alpha,\beta_j=\beta,\gamma_j=\gamma,\delta_j=\delta$, where $\alpha,\beta,\gamma,\delta\in A_0=\RR/\ZZ$ are such that $\alpha<\gamma<\beta<\delta$; moreover suppose that $\beta-\alpha=\delta-\gamma=\frac{1}{2}$, implying that we have $\gamma-\alpha=\delta-\beta:=\rho\in(0,\frac{1}{2})$, and $\beta-\gamma=\alpha-\delta=\frac{1}{2}-\rho\in(0,\frac{1}{2})$, so that we have one free angle parameter $\rho\in(0,\frac{1}{2})$.

\begin{prop}\label{prop:gauge_Borodin_Duits}
Suppose that we are given $a>0, b\in(0,1)$ defining biased $2\times 2$ periodic
weights, and consider $\alpha,\beta,\gamma,\delta\in A_0$ and
$\rho\in(0,\frac{1}{2})$ as specified above.
Then, there exists $\rho\in(0,\frac{1}{2})$ and $\tau\in i \RR^*_+$ such that the
dimer model with Fock's weights in genus 1 for $t=\frac{1}{4}$
is gauge equivalent to the dimer
model with biased $2\times 2$ periodic weights. 
\end{prop}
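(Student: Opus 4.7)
The proof follows the strategy of Proposition~\ref{prop:gauge_Stanley}: by the face-weight criterion recalled in Section~\ref{sec:Aztec_dimer}, it suffices to show that, for a suitable choice of $\rho\in(0,\tfrac{1}{2})$ and $\tau\in i\RR^*_+$, Fock's weights $\nu$ and the biased $2\times 2$ periodic weights $\tilde{\nu}$ yield the same face weight $\Wscr_\fs(\nu)=\Wscr_\fs(\tilde{\nu})$ at every inner face $\fs$ of $\Az_n$.

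The plan is first to enumerate the distinct face weights of $\tilde{\nu}$. By the $2\times 2$ periodicity of the pattern, only finitely many distinct values occur, and a direct computation from Figure~\ref{fig:fig_genus1} (right) expresses each of them as a monomial in $a$ and $b$; taking into account the symmetry between the two checkerboard classes of faces, the effective number of independent face weights to match is two.

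Next, on Fock's side, one computes the face weights under the symmetric angle assignment $\alpha_j\equiv\alpha$, $\beta_j\equiv\beta$, $\gamma_j\equiv\gamma$, $\delta_j\equiv\delta$ with $\beta-\alpha=\delta-\gamma=\tfrac{1}{2}$, together with $t=\tfrac{1}{4}$. Since all train-track angles are constant across the graph, the prime-form factors $E(\alpha,\beta)$ and $E(\gamma,\delta)$ from~\eqref{eq:Focks_weight} cancel pairwise in every face-weight ratio, leaving only ratios of values $\theta_3(\pi(t+\mapd(\fs)))$ of the Jacobi theta function at translates of $t$ by the discrete Abel map. A careful tracking of $\mapd$, as illustrated in Figure~\ref{fig:fig_genus1} (right), shows that modulo the relevant periodicity $\mapd(\fs)$ takes values in $\{0,\tfrac{1}{2},\rho,\tfrac{1}{2}+\rho\}$, so that $t+\mapd(\fs)\in\{\tfrac{1}{4},\tfrac{3}{4},\tfrac{1}{4}+\rho,\tfrac{3}{4}+\rho\}$. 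The quasi-periodicity and half-period shift identities for Jacobi theta functions then collapse each face-weight ratio to an elementary elliptic expression in $\rho$ and $\tau$. The equality of face weights thus reduces to a system of two equations
\begin{equation*}
R_1(\rho,\tau)=f_1(a,b),\qquad R_2(\rho,\tau)=f_2(a,b),
\end{equation*}
where $R_1,R_2$ are explicit theta-function quotients and $f_1,f_2$ are monomials in $a$ and $b$. It remains to verify that the map $(\rho,\tau)\in(0,\tfrac{1}{2})\times i\RR^*_+\to(R_1,R_2)$ is onto the admissible range corresponding to $a>0$ and $b\in(0,1)$; standard monotonicity properties of the relevant theta-function quotients in $\rho$ and in $\mathrm{Im}(\tau)$, combined with a degree-of-freedom count analogous to the one used in the proof of Proposition~\ref{prop:gauge_Stanley}, should provide this surjectivity.

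The principal obstacle is the theta-function bookkeeping: determining precisely which half-period shifts reduce the $R_i$ to a form in which the dependence on $a$ and $b$ is transparent, and then establishing bijectivity of the resulting change of variables onto $\RR^*_+\times(0,1)$. This is essentially an elliptic-function computation of the type already encountered in~\cite{BorodinDuits} and related works, so the obstacle is computational rather than conceptual.
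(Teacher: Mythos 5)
Your overall strategy — match face weights, reduce to a small system of equations in $\rho$ and $\tau$, and solve it by monotonicity — is the same as the paper's. But there is a concrete error at the heart of your computation: the prime-form factors do \emph{not} cancel in the face weights. Every edge of the Aztec diamond is crossed by one horizontal and one vertical train-track, so the prime forms occurring in~\eqref{eq:Focks_weight} are $E(\alpha,\gamma)$, $E(\gamma,\beta)$, $E(\beta,\delta)$, $E(\delta,\alpha)$ (never $E(\alpha,\beta)$ or $E(\gamma,\delta)$), and their alternating product around a face equals $\theta_1(\pi\rho)^2/\theta_1(\pi(\tfrac12-\rho))^2$, which is not $1$. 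In the paper's system of four face-weight equations, once the choice $t=\tfrac14$ makes the $\theta_3$ ratios in the first pair of equations equal to $1$, the parameter $a$ is determined \emph{entirely} by this prime-form ratio, $a=\theta_1(\pi(\tfrac12-\rho))/\theta_1(\pi\rho)=\theta_2(\pi\rho)/\theta_1(\pi\rho)$; see~\eqref{equ:a}. If the prime forms cancelled as you claim, the face weights would depend only on the Abel-map translates $t+\mapd(\fs)$ and the system could not produce $a$ at all, so your reduction to ``ratios of $\theta_3$ values'' cannot be repaired without redoing the computation.

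The second issue is that the existence-and-uniqueness step, which you defer to ``standard monotonicity properties,'' is where most of the work in the paper actually lies, and it is not a routine appeal to monotonicity of each equation separately: the two equations $a=R_1(\rho,\tau)$, $b=R_2(\rho,\tau)$ are coupled, with $\rho$ and $\tau$ entering both. The paper's resolution is to pass to Jacobi elliptic functions ($a=(k')^{-1/2}\cs(\tilde\rho)$, $b=(k')^{-1/2}\dn(\tilde\rho+\tfrac{K}{2})$), apply the addition formula for $\dn$ and the special values at $\tfrac{K}{2}$ to \emph{eliminate} $\rho$ and obtain the closed-form relation~\eqref{equ:b3} expressing $b$ as an explicit function of $a$ and $k'$ alone; only then does one get a genuine one-variable monotonicity statement (in $k'$ for fixed $a$, with range exactly $(0,1)$), after which $\rho$ is recovered from~\eqref{equ:a}. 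Without an elimination step of this kind, asserting surjectivity of $(\rho,\tau)\mapsto(R_1,R_2)$ onto $\RR^*_+\times(0,1)$ is a claim, not a proof.
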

\begin{proof}
We are given $a>0$ and $b\in(0,1)$ and need to prove that there exists
$\rho\in(0,\frac{1}{2})$, $\tau\in i \RR^*_+$, such that
for a good choice of $t$, 
the face weight of each face is the same in both settings.

The computation of the discrete Abel map is illustrated in Figure~\ref{fig:fig_genus1} (right, blue), taking into account that the theta function $\theta_3$ is $\pi$-periodic. 
Computing the face weights similarly to the genus 0 case, for each of the weight functions there are four distinct face weights, which can for instance be computed from the four faces surrounded by the magenta line of Figure~\ref{fig:fig_genus1}. We are looking for $\rho,\tau,t$ satisfying the following four equalities:
\begin{align*}
\frac{1}{a^2}& =
\frac{\theta_3(\pi (t+\frac{1}{2}))^2}{\theta_3(\pi t)^2}
\frac{\theta_1(\pi \rho)^2}{\theta_1(\pi(\frac{1}{2}- \rho))^2},&
\frac{1}{a^2}& =
\frac{\theta_3(\pi t)^2}{\theta_3(\pi(t+\frac{1}{2}))^2}
\frac{\theta_1(\pi \rho)^2}{\theta_1(\pi(\frac{1}{2}-\rho))^2},\\
(ab)^2 &=
\frac{\theta_3(\pi (t+\rho))^2}{\theta_3(\pi(t+\frac{1}{2}+ \rho))^2}
\frac{\theta_1(\pi(\frac{1}{2}-\rho))^2}{\theta_1(\pi \rho)^2},&
\frac{a^2}{b^2}&=
\frac{\theta_3(\pi(t+\frac{1}{2}+\rho))^2}{\theta_3(\pi( t+ \rho))^2}
\frac{\theta_1(\pi(\frac{1}{2}-\rho))^2}{\theta_1(\pi \rho)^2}.
\end{align*}
Since all the quantities involved are positive, we can remove the squares.

  The two equalities on the first line imply that
$\theta_3(\pi(t+\frac{1}{2}))=\theta_3(\pi t)$, so $t=\pm\frac{1}{4}$.
Taking the product of the first two equalities of both lines yields
  $b=\theta_3(\pi(t+\rho))/\theta_3(\pi(t+\frac{1}{2}+\rho))$, which is strictly less than 1
  for $\rho\in(0,\frac{1}{2})$ only if we choose the plus sign, and
  set $t=+\frac{1}{4}$.
Using that the function $\theta_3$ is even, we are looking for $\rho,\tau$ satisfying
\begin{equation*}
a=\frac{\theta_1(\frac{\pi}{2}-\pi\rho)}{\theta_1(\pi\rho)}=\frac{\theta_2(\pi\rho)}{\theta_1(\pi\rho)},\qquad 
b=\frac{\theta_3(\pi(\rho+\frac{1}{4}))}{\theta_3(\pi(\rho-\frac{1}{4}))}=\frac{\theta_3(\pi(\rho+\frac{1}{4}))}{\theta_4(\pi(\rho-\frac{1}{4}))},
\end{equation*}
where in both second equalities, we used~\cite[Eq. (1.3.2,1.3.4)]{Lawden}.
Let us now express these functions using Jacobi trigonometric functions
$\sc,\dn$ and their inverses $\cs,\nd$, see~\cite[Chap. 2]{Lawden} for details.
Recall the following relations between the parameters of Jacobi's trigonometric
and theta functions: $k=\frac{\theta_2(0)^2}{\theta_3(0)^2},\
k'=\sqrt{1-k^2}=\frac{\theta_4(0)^2}{\theta_3(0)^2},\
K=\frac{\pi}{2}\theta_3(0)^2,\ iK'=\tau K$, and let us write $\tilde{\rho}=2K
\rho$. We have, see~\cite[2.1.1-2.1.3]{Lawden},
\begin{equation*}
\sc(\tilde{\rho}|k)=
\frac{\theta_3(0|\tau)}{\theta_4(0|\tau)}
\frac{\theta_1(\pi\rho|\tau)}{\theta_2(\pi\rho|\tau)},\qquad 
\dn(\tilde{\rho}|k)=
\frac{\theta_4(0|\tau)}{\theta_3(0|\tau)}
\frac{\theta_3(\pi\rho|\tau)}{\theta_4(\pi\rho|\tau)}.
\end{equation*}
When working with Jacobi's trigonometric functions, the elliptic modulus $k$ (or
$k'$) is considered as given; then $q=e^{i\pi\tau}$ can be derived from $k$ and
$k'$, see~\cite[2.1.12,\ 2.1.23]{Lawden}. As $k$ decreases from $1$ to $0$ (or
$k'$ increases from $0$ to $1$), 
$\tau$ goes from $i0$ to $i\infty$. Again, whenever no confusion occurs, we
remove $k$, resp.\ $\tau$, from the argument of the Jacobi trigonometric
functions, resp.\ theta functions. 
As a consequence of the above discussion, we are looking for $\rho$ and $k$, or equivalently $k'$, satisfying 
\begin{align}
a&=(k')^{-\frac{1}{2}}\cs(\tilde{\rho})
\label{equ:a}\\
b&=(k')^{-\frac{1}{2}}\dn\Bigl(\tilde{\rho}+\frac{K}{2}\Bigr)
\label{equ:b}.
\end{align}
We first use the addition formula~\cite[22.8.17]{NIST:DLMF}, in
Equation~\eqref{equ:b}, divide numerator and denominator by
$\sn(\tilde{\rho})\sn(\frac{K}{2})$, giving 
\begin{equation}
    b=(k')^{\frac{-1}{2}}
    \frac{
      \cs(\tilde{\rho})\dn(\frac{K}{2})-\dn(\tilde{\rho})\cs(\frac{K}{2})
    }{
      \cs(\tilde{\rho})\dn(\tilde{\rho})-\dn(\frac{K}{2})\cs(\frac{K}{2})
    }.
\end{equation}
We then, use the
identities~\cite[22.6.1-22.6.2]{NIST:DLMF}
to express $\dn$ using the function $\cs$
\begin{equation*}
    \dn(\tilde{\rho})=\sqrt{\frac{k'^2+\cs^2(\tilde{\rho})}{1+\cs^2(\tilde{\rho})}},
\end{equation*}
and the special values
$\dn(\frac{K}{2})=\cs(\frac{K}{2})=(k')^{\frac{1}{2}}$
found for example in~\cite[Table 22.5.2]{NIST:DLMF}
to obtain
\begin{equation}
  b
  =\frac{\cs(\tilde{\rho})-\dn(\tilde{\rho})}{\cs(\tilde{\rho})\dn(\tilde{\rho})-k'}
  =\frac{%
    \sqrt{k'^{-1}+a^2}-\sqrt{k'^{-1}+a^{-2}}
  }{%
    \sqrt{k'+a^2}-\sqrt{k'+ a^{-2}}.
  }
  \label{equ:b3}
\end{equation}
This formula, valid for $a\neq 1$ tends to $b=\sqrt{k'}$ for $a\to 1$, and can thus be extended in the case $a=1$. 
For a fixed value of $a$, the formula for $b$ in Equation~\ref{equ:b3} is a
  continuous, increasing function of $k'$, ranging from 0 to 1 as $k'$ varies in
  $(0,1)$, see the plots on the right of
  Figure~\ref{fig:rho_b_function_of_kprime_for_fixed_a}.
Thus, for every $a>0$, there exists a unique $k'$ solving this equation. Let us
fix this $k'$,
then we are looking for $\tilde{\rho}$ such that 
$a=(k')^{-\frac{1}{2}}\cs(\tilde{\rho})=(k')^{\frac{1}{2}}\cs(2K\rho)$. The
right hand side, seen as a function of $\rho$, is monotone and takes all
values from $0$ to
$\infty$ so that, for this fixed $k'$, such a $\rho$ exists and is unique,
see the plots on the left of
  Figure~\ref{fig:rho_b_function_of_kprime_for_fixed_a} to see how $\rho$ varies
  with $k'$ for different values of $a$.
Hence, we have
proved that $\rho$ and $k'$ (thus $\tau$) are determined uniquely from $a$ and $b$
if we want the two models to give the same face weights.
\end{proof}

\begin{figure}
  \centering
  \includegraphics[width=6cm]{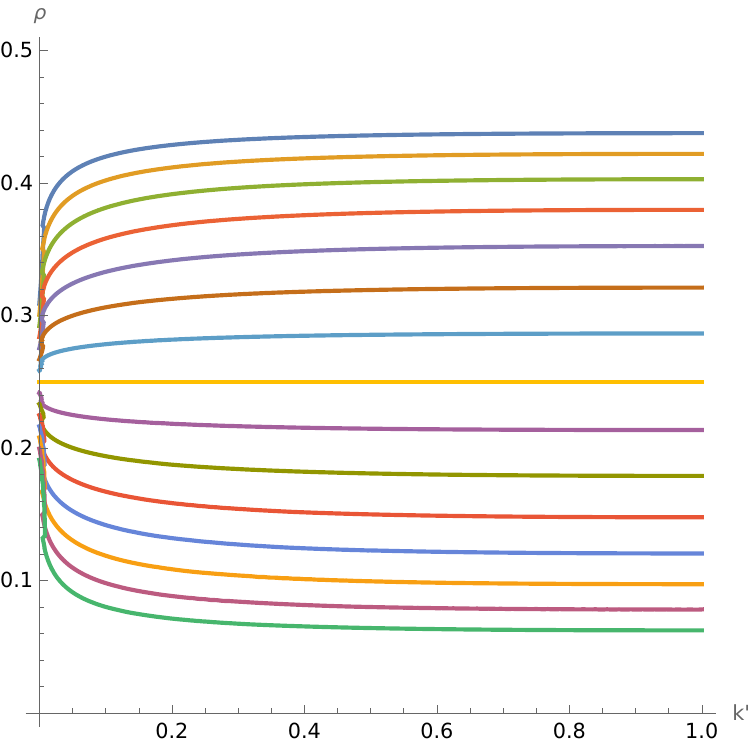}
  \hfill
  \includegraphics[width=6cm]{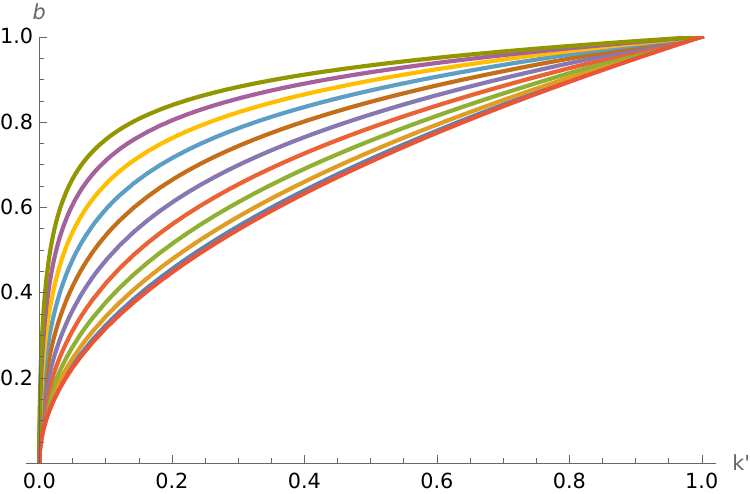}
  \caption{Left: the plot of $\rho$ as a function of $k'=\sqrt{1-k^2}$ satisfying
    Equation~\eqref{equ:a}, for various values of $a$ (of the form $2^{j/3}$,
    with $-7\leq j \leq 7$). Smaller values of $a$ correspond to higher curves.
    Right: the plot of $b$ as a function of $k'$ for several values of $a$ (of
    the form $2^{j/3}$, $0\leq j \leq 10$). The plot for $a<1$ is the same as for
    $1/a$.
  }
  \label{fig:rho_b_function_of_kprime_for_fixed_a}
\end{figure}

\begin{rem}\leavevmode
\begin{itemize}
 \item Assuming that the train-track angles are given by
   $\alpha,\beta,\gamma,\delta$ (independently of $j$) and supposing that
   $\beta-\alpha=\delta-\gamma=\frac{1}{2}$ imply that Fock's weights have a
   period of size $2\times 2$. As a consequence, in the genus 1 case, for
   general $2\times 2$ periodic weights, on top of the angle parameter $\rho$
   and the modular parameter $\tau$, we have one additional parameter
   $t\in\RR/\ZZ$. This parameter is fixed to the value $+\frac{1}{4}$ in the
   case of the biased $2\times 2$ periodic weights of~\cite{BorodinDuits}. 
 \item Using the notation of the proof of Proposition~\ref{prop:gauge_Borodin_Duits}, for every $\rho\in(0,\frac{1}{2})$, setting $k=0$ (or equivalently $k'=1$), amounting to considering the limit $\tau\rightarrow i\RR_+^*$, we obtain $b=1, a=\cot(\pi\rho)\in(0,\infty)$. The genus 1 case degenerates to a genus 0 case, and we recover a parameterization of the 1-periodic weights of~\cite{ChhitaJohanssonYoung}. 
 \item Returning to the proof of Proposition~\ref{prop:gauge_Borodin_Duits}, for
   every $k'\in(0,1)$, choosing
   $a=1$ corresponds to taking $\rho=\frac{1}{4}$, and according to
   Equation~\eqref{equ:b3}, $b=\sqrt{k'}$.
   We recover a parameterization of the 2-periodic weights of~\cite{ChhitaYoung}.
\end{itemize}
\end{rem}

\subsection{Space of parameters}\label{sec:space_parameters}

The special form of Fock's weights, see Equation~\eqref{eq:Focks_weight}, may seem to be restrictive; we explain here that this is not the case. More precisely, we prove that given a dimer model on the Aztec diamond with any positive weight function, it is gauge equivalent to a dimer model with Fock's weights. Recall from Section~\ref{sec:Aztec_dimer} that the two dimer models then yield the same dimer Boltzmann measure. 

\begin{prop}\label{prop:reconstruction}
For any $n\in\mathbb{N}^*$, and any choice of positive edge weights $\nu$ on
$\Az_n$, there exists an M-curve $\Sigma$ of genus $g$, a parameter
$t\in(\mathbb{R}/\mathbb{Z})^g$, and angles $\alphab,\betab,\gammab,\deltab\in A_0$ assigned to oriented train-tracks of $\Az_n$ satisfying $\alphab<\gammab<\betab<\deltab$ on $A_0$, such that 
the dimer models with Fock's weights and weight function $\nu$ are gauge equivalent. 
\end{prop}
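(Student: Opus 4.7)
The plan is to reduce the statement to matching face weights, via the criterion recalled in Section~\ref{sec:Aztec_dimer}: two positive weight functions on $\Az_n$ are gauge equivalent if and only if they induce the same face weight $\Wscr_\fs$ at every inner face $\fs$. Given $\nu$, I therefore aim to produce an M-curve $\Sigma$ of some genus $g$, a parameter $t\in(\RR/\ZZ)^g$, a base value $d$ for the discrete Abel map, and train-track angles $\alphab,\betab,\gammab,\deltab\in A_0$ with $\alphab<\gammab<\betab<\deltab$, such that Fock's weights reproduce the prescribed face weights of $\nu$.

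The natural strategy is an induction on $n$ in which the genus $g$ is allowed to grow. The base case $n=1$ is handled in genus~$0$: a single face weight is realised by four angles on $S^1$ with the prescribed cyclic cross-ratio, as in the first step of the proof of Proposition~\ref{prop:gauge_Stanley}. For the inductive step, one passes from $\Az_{n-1}$ (realised on some $\Sigma'$ of genus $g'$) to $\Az_n$ by appending a diagonal layer of faces, which introduces four new train-tracks (hence four new angles) and $4n-4$ new inner faces. Since four new angles alone do not suffice to match $4n-4$ new face weights, one must enlarge $\Sigma'$ by attaching handles, obtaining $\Sigma$ of higher genus $g$; each handle supplies moduli, a new component of $t$ and a new component of $d$, so $g$ can be chosen large enough that the number of free real parameters strictly exceeds the number of new face-weight constraints.

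The matching itself would rely on a monotonicity-and-surjectivity argument in the spirit of Propositions~\ref{prop:gauge_Stanley} and~\ref{prop:gauge_Borodin_Duits}. Each Fock face weight depends only on the data of the two train-tracks crossing at the corresponding face and on the discrete Abel map at the four adjacent dual vertices. Ordering the new inner faces appropriately, one argues face-by-face that the face weight, viewed as a continuous function of a single newly available parameter on an open interval, sweeps all of $(0,\infty)$ while the previously matched face weights are left fixed; the prescribed positive value is then attained by an intermediate-value step, exactly as in the sine-ratio argument used to solve system~\eqref{equ:gauge_Stanley}.

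The main obstacle is to coordinate the handle attachment with the choice of angles and of $t$ so that simultaneously (i) the cyclic order $\alphab<\gammab<\betab<\deltab$ is preserved throughout, (ii) $t$ remains in the real subtorus $(\RR/\ZZ)^g$, ensuring positivity of Fock's weights up to a Kasteleyn phase, and (iii) each added parameter acts locally, in the sense that a new handle only affects the face weights it is meant to adjust, leaving the previously matched ones intact. This localisation, which would be controlled by the behaviour of theta functions and prime forms under handle attachment, is the delicate analytic step; once it is in place the face-by-face monotonicity closes the induction. An alternative route is to embed $\Az_n$ as a subgraph of a doubly-periodic minimal graph, extend $\nu$ periodically, invoke the analogous reconstruction statement for infinite minimal graphs in~\cite{BCdT:genusg}, and restrict the resulting Fock data back to $\Az_n$.
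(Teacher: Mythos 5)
Your main route has a genuine gap at the step you yourself flag as delicate. The induction on $n$ with handle attachment rests on the claim that each added handle (and the corresponding new components of $t$ and $d$) acts \emph{locally}, adjusting only the new face weights while leaving the previously matched ones fixed. This is not a technicality one can defer: the theta function and the prime form depend globally on the period matrix $\Omega$, so attaching a handle perturbs \emph{every} Fock face weight on the surface, including those already matched on $\Az_{n-1}$. The intermediate-value argument of Proposition~\ref{prop:gauge_Stanley} works because in genus $0$ each face weight is an explicit ratio of sines in which exactly one new angle appears; there is no analogous one-parameter-per-face structure once the moduli of the curve enter all face weights simultaneously. A parameter count (genus growing like $n^2$ so that the number of real parameters exceeds the number of face-weight constraints) does not by itself give surjectivity of the resulting map, and no degree or transversality argument is supplied. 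As written, the induction does not close.

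The one-sentence ``alternative route'' at the end of your proposal is in fact the paper's actual proof, but it needs more than the sentence you give it. The paper extends $\nu$ periodically to $\ZZ^2$, forms the spectral curve of~\cite{KOS} (a Harnack curve) with its standard divisor, and invokes the Kenyon--Okounkov dimer spectral theorem: the spectral data determine the periodic weights up to gauge. The reconstruction statement of~\cite{BCdT:genusg} (together with~\cite[Theorem~7.3]{GK}) then produces an M-curve, a parameter $t\in(\RR/\ZZ)^g$ and cyclically ordered train-track angles whose periodic Fock weights realise that same spectral data, hence are gauge equivalent to $\bar{\nu}$; restricting to $\Az_n$ finishes the proof. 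The essential ingredient you are missing in that aside is the passage through the spectral curve and the spectral theorem --- the statement in~\cite{BCdT:genusg} is not a direct face-weight-matching result, and without the spectral-data characterisation of gauge classes there is nothing to ``invoke.'' If you develop that route, you get the proposition without any induction or handle surgery; the price is that the construction is non-explicit, which is exactly the caveat the paper records in the remark following the proposition.
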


\begin{proof}
Consider for a moment $\As_n$ as a subgraph of the
infinite square lattice $\mathbb{Z}^2$, and extend the edge weights $\nu$ in a
periodic system of weights $\bar{\nu}$ for the whole square lattice.
Following~\cite{KOS}, we can construct the spectral curve $\C$ associated to the periodic weights $\bar{\nu}$ as the set of zeros of the characteristic polynomial, see~\cite[Sections 3.1.3, 3.2.3]{KOS} for definitions. This algebraic curve is a Harnack curve. Moreover, if a vertex is distinguished, there is a natural associated standard divisor, where a \emph{standard divisor} corresponds to a collection of $g$ points on each oval of $\C$ if $\C$ has genus $g$~\cite{KO:Harnack}; this defines the spectral data of the model. The dimer spectral theorem by Kenyon and Okounkov implies that that $\C$ together with its standard
divisor (the spectral data) characterize the periodic weights up to gauge
transformation. By Remark~\cite[50.2]{BCdT:genusg}, which relies on~\cite[Theorem 49]{BCdT:genusg} and~\cite[Theorem 7.3]{GK}, it follows that there is a $t\in(\mathbb{R}/\mathbb{Z})^g$ and a periodic assignment of angle parameters to the train-tracks, satisfying the cyclic order condition, such that the corresponding dimer model with periodic Fock's weights is gauge equivalent to the original one. When restricting back to the subgraph $\As_n$, we get weights of the form~\eqref{eq:Focks_weight} which are gauge-equivalent to the initial weights $\nu$.
\end{proof}

\begin{rem}\leavevmode
\begin{itemize}
\item In Section~\ref{sec:example_0_1}, we prove two explicit realizations of
  Proposition~\ref{prop:reconstruction} in the case of Stanley's weights and of
  the biased $2\times 2$ periodic weights. In general, making the content of
  Proposition~\ref{prop:reconstruction} explicit is not easy because the argument underlying
  the proof is a general parameterization theorem. The idea to proceed would be
  to compute the associated characteristic polynomial using the weight function
  $\nu$, and the associated spectral curve, Newton polygon and amoeba. The genus
  $g$ is given by the number of holes in the amoeba, and from the tentacles one
  can recover the angles of the train-tracks. However the parameter $t$ is
  encoded by a standard divisor on the curve, which is some additional
  information that needs to be given. 
\item Note that, generically, holes in the amoeba are in correspondence with
  integer points in the interior of the Newton polygon; a lower number of holes
  means that there are isolated singularities on the curve. As a consequence,
  generically, the genus $g$ of Proposition~\ref{prop:reconstruction} increases
  with $n$, the size of the Aztec diamond. 
\end{itemize}
\end{rem}

\section{Explicit formula for the inverse Kasteleyn matrix}\label{sec:inverse_Kasteleyn}

The setting is that of Section~\ref{sec:Fock}: we consider an M-curve $\Sigma$ of genus $g$, the associated Riemann theta function $\theta$ and prime form $E$, and fix a parameter $t\in(\RR/\ZZ)^g$; we suppose that oriented train-tracks are assigned angles in $A_0$ satisfying condition~\eqref{equ:angle_condition}, and consider the associated discrete Abel map $\mapd$. In this section we state and prove one of the main results of this paper, Theorem~\ref{thm:Kinv}, consisting of an explicit expression for the inverse of the Kasteleyn matrix with Fock's weights. Recalling Theorem~\ref{thm:measure_Kenyon}, an immediate consequence of Theorem~\ref{thm:Kinv} is an explicit formula for the dimer Boltzmann measure in the very general framework of Fock's weights. As explained in the introduction, Theorem~\ref{thm:Kinv} aims at, in some sense, closing a long history of explicit expressions for the inverse Kasteleyn matrix of the Aztec diamond; refer to Section~\ref{sec:intro} for historical background. 

This section is organised as follows: in Section~\ref{sec:kernel}, we give some prerequisites, that is, the definition of the forms $g$ in the kernel of the Kasteleyn matrix $\Ks$~\cite{BCdT:genusg}, and Fay's identity~\cite{Fay}; then in Section~\ref{sec:K_inv} we state and prove Theorem~\ref{thm:Kinv}. 

\subsection{Prerequisites}\label{sec:kernel}

\paragraph{Forms in the kernel of $\Ks$.} We need the following ingredient from~\cite{BCdT:genusg}, namely forms in the kernel of the Kasteleyn matrix $\Ks$, defined as follows. Every edge of $\Azdiam_n$ consists of a dual vertex $\fs$, and a white or black vertex of $\Az_n$. For every edge of $\Azdiam_n$, and every $u\in\Sigma$, define:
\begin{align}\label{eq:form_g}
g_{\fs,\ws}(u)&=g_{\ws,\fs}(u)^{-1}=\frac{\theta(t+u+\mapd(\ws))}{E(\beta,u)}\\
g_{\bs,\fs}(u)&=g_{\fs,\bs}(u)^{-1}=\frac{\theta(-t+u-\mapd(\bs))}{E(\alpha,u)},
\end{align}
where $\alpha$, resp. $\beta$, is the angle of the oriented train-track crossing the edge $\ws\fs$, resp. $\bs\fs$, see Figure~\ref{fig:quad}. 
When $\xs,\ys$ are two vertices of $\Azdiam_n$, consider a path $\xs=\xs_1,\dots,\xs_n=\ys$ of $\Azdiam_n$, and set 
\begin{equation*}
g_{\xs,\ys}(u)=\prod_{j=1}^{n-1} g_{\xs_j,\xs_{j+1}}(u).
\end{equation*}
This quantity is well defined, \emph{i.e.}, independent of the choice of path in $\Azdiam_n$ from $\xs$ to $\ys$.

By~\cite[Lemma 33]{BCdT:genusg}, see also~\cite{Fock} we know that the forms $g$ are in the kernel of $\Ks$ for vertices that are not on the boundary;
written explicitly we have that, for every $u\in\Sigma$, for every vertex $\xs$ of~$\Azdiam_n$, and every white vertex in the bulk of $\As_n$,
\begin{equation}\label{equ:kernel_g}
\sum_{\bs\sim\ws}\Ks_{\ws,\bs}g_{\bs,\xs}(u)=0.  
\end{equation}

\paragraph{Fays' identity.} In the course of the proof of Theorem~\ref{thm:Kinv}, we will need
the following variant of Fay's identity~\cite[Proposition 2.10]{Fay} to expand the product $\Ks_{\ws,\bs}g_{\bs,\ws}(u)$, see also~\cite[Equation (9)]{BCdT:genusg},
\begin{align}
\Ks_{\ws,\bs}g_{\bs,\ws}(u)&=
\frac{E(\alpha,\beta)}{\theta(t+\mapd(\fs))\theta(t+\mapd(\fs'))}
\frac{\theta(t+u+\mapd(\ws))\theta(-t+u-\mapd(\bs))}{E(\alpha,u)E(\beta,u)}\nonumber\\
&=\omega_{\beta-\alpha}+\sum_{\ell=1}^g 
\Bigl(\frac{\partial \theta}{\partial z_\ell}(t+\mapd(\fs))-\frac{\partial
\theta}{\partial z_\ell}(t+\mapd(\fs'))\Bigr)\omega_j,\label{equ:Fay}
\end{align}
where $\omega_{\beta-\alpha}=\ud_u \log\frac{E(u,\alpha)}{E(u,\beta)}$ is the unique meromorphic 1-form with 0 integral along $A$-cycles, and two simple poles at $\beta$, resp. $\alpha$, with residue $1$, resp. $-1$.

\subsection{Explicit formula for the inverse Kasteleyn operator}\label{sec:K_inv}

We consider the Kasteleyn matrix $\Ks$ with Fock's weights, see Equation~\eqref{eq:Focks_weight}. 
In order to state our explicit formula for the inverse matrix, we first define contours of integration. By assumption, the angles assigned to oriented train-tracks satisfy the cyclic condition~\eqref{equ:angle_condition}:
$
\alphab<\gammab<\betab<\deltab.
$
This implies that the real component $A_0$ of $\Sigma$ can naturally be split into four disjoint connected subsets containing all of the angles of one type and none of the other types. We let $\C_1$ be a trivial contour on $\Sigma$, oriented counterclockwise, containing in its interior all of
the angles $\gammab=(\gamma_j)_{j=1}^n$ and none of the angles $\alphab,\betab,\deltab$.

Similarly $\C_2$ is a trivial contour on $\Sigma$, oriented counterclockwise, containing in its interior all of the angles $\alphab=(\alpha_j)_{j=1}^n$ and none of the others.

\begin{thm}\label{thm:Kinv}
For every pair $(\bs,\ws)$ of black and white vertices of $\Az_n$, the coefficient $(\bs,\ws)$ of the inverse Kasteleyn matrix is explicitly given by 
\begin{multline}\label{eq:Kinv}
\Ks^{-1}_{\bs,\ws}=\frac{1}{(2\pi i)^2}\frac{1}{\theta(p)}\int_{\C_2}\int_{\C_1}\frac{\theta(p+(v-u))}{E(u,v)}g_{\bs,0}(u)g_{0,\ws}(v)\prod_{j=1}^n \frac{E(\beta_j,u)}{E(\delta_j,u)}\frac{E(\delta_j,v)}{E(\beta_j,v)}+\\
-\II_{\{\bs \text{ right of }\ws\}}\frac{1}{2\pi i}\int_{\C_2}g_{\bs,\ws}(v),
\end{multline}
where $\C_1$, resp. $\C_2$, is the closed contour defined above used to integrate over $u$, resp.~$v$, and 
$
p=\sum\limits_{j=1}^n(\delta_j-\beta_j)-t-\mapd(0)
$.
\end{thm}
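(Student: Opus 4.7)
The plan is to verify $\Ks\,\Ks^{-1}=\mathrm{Id}$ by direct computation, where $\Ks^{-1}$ denotes the matrix defined by the right-hand side of~\eqref{eq:Kinv}. Since $\Ks$ is a finite square matrix that is invertible (its determinant has modulus equal to the non-vanishing partition function, by Theorem~\ref{thm:partition_Kast}), it suffices to check one side of this identity. Fix white vertices $\ws,\ws'$ and split $\sum_{\bs\sim\ws}\Ks_{\ws,\bs}\Ks^{-1}_{\bs,\ws'}$ according to the two pieces of~\eqref{eq:Kinv}, which I will call the \emph{double-integral part} and the \emph{indicator part}.

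For the double-integral part, I push the sum inside the two contour integrals; the only $\bs$-dependent factor in the integrand is $g_{\bs,0}(u)$. By the kernel identity~\eqref{equ:kernel_g}, the sum $\sum_{\bs\sim\ws}\Ks_{\ws,\bs}\,g_{\bs,0}(u)$ vanishes whenever $\ws$ lies in the bulk of $\Az_n$. Consequently the double-integral part contributes nothing for interior $\ws$, and only a boundary residual---coming from the missing would-be black neighbours of $\ws$ in a larger graph---remains when $\ws$ touches the boundary.

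For the indicator part, I apply Fay's identity~\eqref{equ:Fay} to rewrite each product $\Ks_{\ws,\bs}\,g_{\bs,\ws'}(v)$ as the meromorphic form $\omega_{\beta-\alpha}(v)$ plus a linear combination of the holomorphic differentials $\omega_\ell$. Integration over $\C_2$ annihilates the holomorphic pieces (since $\C_2$ is a trivial contour, its $A$-periods vanish) and picks up residues of the $\omega_{\beta-\alpha}$-pieces at the $\alpha_j$'s enclosed by $\C_2$. The indicator $\II_{\{\bs\text{ right of }\ws'\}}$ breaks the symmetry of the kernel equation: when $\ws=\ws'$, the restricted sum is the negative of its complementary sum and the residue computation produces the desired $\delta_{\ws,\ws'}$. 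When $\ws\neq\ws'$, the same computation yields an explicit, non-trivial expression that must be cancelled by the boundary residual of the double-integral part.

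The main technical obstacle lies precisely in verifying this cancellation. My plan is to shrink $\C_1$ onto its only enclosed singularities (the $\gamma_j$-type poles of $g_{\bs,0}(u)$, once the factor $\prod_j E(\beta_j,u)/E(\delta_j,u)$ has cleared any stray $\beta_j$-poles inside $\C_1$) and to perform an analogous reduction for $\C_2$. The special choice $p=\sum_j(\delta_j-\beta_j)-t-\mapd(0)$ is then tailored so that the ratio $\theta(p+(v-u))/\theta(p)$, evaluated at the collected residues, reproduces the exact combinatorial weights attached to the missing boundary edges of $\Ks$. The key algebraic ingredients should be the three-term Fay identity underlying~\eqref{equ:Fay} and the quasi-periodicity of $\theta$ and $E$ along the $A$- and $B$-cycles, which together control how the integrand transforms under contour deformation on $\Sigma$.
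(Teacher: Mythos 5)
Your top-level strategy --- verifying $\Ks\Ks^{-1}=\mathrm{Id}$ row by row, using the kernel identity~\eqref{equ:kernel_g} to kill the bulk contributions of the double integral and Fay's identity~\eqref{equ:Fay} plus residue analysis on $\C_2$ for the indicator part --- is exactly the paper's. But there is a structural error in how you assign the diagonal term, and the one genuinely hard step is left unproved with a mechanism that does not match what actually happens. First, you claim the indicator part always produces $\delta_{\ws,\ws'}$. That is true only for $\ws_x\neq 2n$: when $\ws$ sits in the rightmost column it has \emph{no} black neighbours to its right, so the indicator part of $(\Ks\Ks^{-1})_{\ws,\ws'}$ vanishes identically, and the Kronecker delta must be produced entirely by the double integral. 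Relatedly, your ``boundary residual'' is not symmetric between the two boundaries: for $\ws_x=0$ the double integral still vanishes (the phantom left neighbours contribute integrands with no poles inside $\C_1$), whereas for $\ws_x=2n$ it does not; your proposal does not distinguish these cases.

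Second, the cancellation you defer is the heart of the proof, and the plan you sketch for it is not the right one. The paper handles $\ws_x=2n$ by showing that, after multiplying $\sum_{i}\Ks_{\ws,\bs_i}g_{\bs_i,\ws}(u)\,g_{\ws,0}(u)$ by $\frac{\theta(p+(v-u))}{E(u,v)}\prod_j\frac{E(\beta_j,u)}{E(\delta_j,u)}$, every pole of the $u$-integrand except those at $\gamma_1,\dots,\gamma_n$ and at $u=v$ is cancelled; since $\C_1$ encloses all the $\gamma_j$ and the total residue of a meromorphic $1$-form on the compact surface $\Sigma$ is zero, the $u$-integral collapses to the single residue at $u=v$, where $\theta(p+(v-u))/\theta(p)$ and the prime-form products cancel, reducing $(\Ks f^1)_{\ws,\ws'}$ to a single $\C_2$-integral that either equals $(\Ks f^2)_{\ws,\ws'}$ or gives $\II_{\{\ws=\ws'\}}$ by a rerun of the Fay argument. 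Your plan to ``shrink $\C_1$ onto the $\gamma_j$-poles'' and have the special value of $p$ ``reproduce the combinatorial weights'' misidentifies the mechanism: $p$ and the products $\prod_j E(\beta_j,\cdot)/E(\delta_j,\cdot)$ are there to make the integrand a single-valued meromorphic form with the right zero/pole bookkeeping so that the $u=v$ residue is clean, not to encode boundary edge weights. A smaller but real gap: your claim that $\int_{\C_2}$ annihilates the holomorphic pieces ``since the $A$-periods vanish'' ignores the factor $g_{\ws,\ws'}(v)$ multiplying $\omega_\ell$; one needs the observation that $\C_2$ encloses either all of its poles or none of them, so the integral vanishes by the global residue theorem rather than by a period computation.
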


Before turning to the proof, let us make a few remarks.
\begin{rem}\leavevmode
\begin{itemize}
\item The integrand, seen as function of $u$, resp. $v$, is a meromorphic $1$-form. Indeed, looking at terms involving prime forms and recalling that the prime form is
a $(-\frac{1}{2},-\frac{1}{2})$ form, we have that the integrand is a $1$-form. Moreover, an explicit computation shows that this $1$-form has no period when $u$, resp. $v$, is translated by a horizontal/vertical period of the lattice $\Lambda$, implying that it is meromorphic.

\item In the double integral of~\eqref{eq:Kinv} the factor
  $g_{\bs,0}(u)\prod_j\frac{E(\beta_j,u)}{E(\delta_j,u)}$ has
  poles at every $\gamma_j$ on the left of $\bs$ and every $\delta_j$ on the
  right of $\bs$, and zeros at every $\alpha_j$ below $\bs$ and every $\beta_j$
  above $\bs$; similarly for
    $g_{0,\ws}(v)\prod_j\frac{E(\delta_j,v)}{E(\beta_j,v)}$, where the role of
  poles and zeros are exchanged. As a consequence, we can deform the contours of
  integration without changing the value of the integral if we do not cross poles.
  For example, one could move $\C_1$ into another contour $\C_1'$ to also include the $\beta_j$'s in its interior,
  and replace $\C_2$ by $\C_2'$
  depicted on Figure~\ref{fig:contours_deform},
  which is now oriented clockwise, and contains on its right the points from
  $\gammab$ and $\betab$ (but not those from $\alphab$). The
  relative position of $\C_1$ and $\C_2$ is important because of the presence of
  $E(u,v)$ in the denominator of the integrand. The contribution of the residue at
  $u=v$ is exactly given by the single contour integral in front of the indicator
  function. We can therefore absorb this second term inside the double contour
  integral by indicating that when $\bs$ is on the right of $\ws$, we require that
  $\C_1'$ is \emph{inside} $\C_2'$ (instead of outside).
  
  \begin{figure}[htpb]
  \centering
  \hfill
  \includegraphics[width=0.3\linewidth]{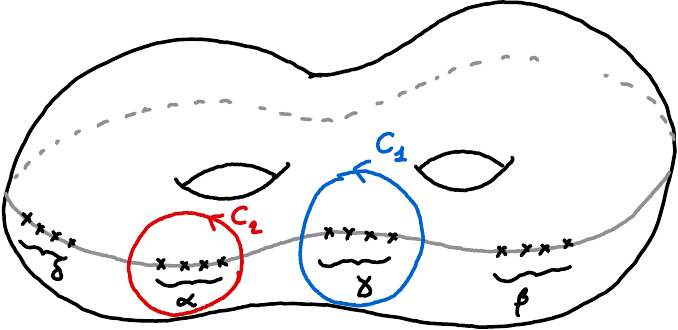}\hfill
  \includegraphics[width=0.3\linewidth]{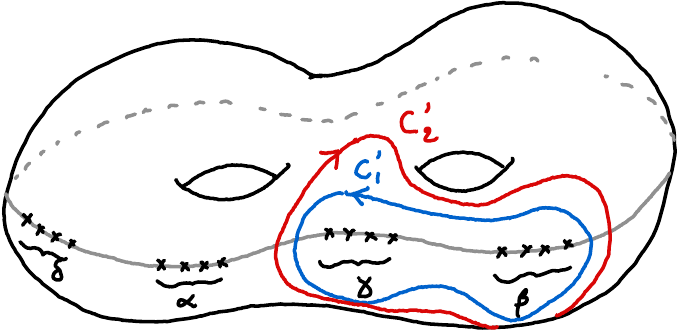}\hfill\mbox{}
  \caption{A possible deformation of integration contours for
  Formula~\eqref{eq:Kinv}.}
  \label{fig:contours_deform}
\end{figure}
  
\item The point $0=(0,0)$ seems to play a particular role in the formula: in the
  definition of $p$ and in the arguments of the functions $g$. This is actually
  not the case, one
  could express $p$ and the product $g$ times the terms involving the prim forms
  using another reference point and the geometry of the Aztec diamond.
\item Examples of Theorem~\ref{thm:Kinv} in specific cases are given after the proof. 
\end{itemize}
\end{rem}

\begin{proof}
Although our setting is much more general than the paper~\cite{ChhitaJohanssonYoung}, our inspiration for this proof and choice of notation is inspired by the latter. Consider two white vertices $\ws,\ws'$; the proof consists in showing that 
\[
(\Ks\Ks^{-1})_{\ws,\ws'}=\sum_{\bs\sim\ws} \Ks_{\ws,\bs}\Ks^{-1}_{\bs,\ws'}=\II_{\{\ws=\ws'\}}.
\]
Let us denote by $f^1$, resp. $f^2$, the first, resp. second term of Equation~\eqref{eq:Kinv}:
\begin{align}
f^1_{\bs,\ws}&=\frac{1}{(2\pi i)^2}\frac{1}{\theta(p)}\int_{\C_2}\int_{\C_1}\frac{\theta(p+(v-u))}{E(u,v)}g_{\bs,0}(u)g_{0,\ws}(v)\prod_{j=1}^n \frac{E(\beta_j,u)}{E(\delta_j,u)}\frac{E(\delta_j,v)}{E(\beta_j,v)}\label{equ:f_1}\\
f^2_{\bs,\ws}&=\II_{\{\bs \text{ right of }\ws\}}\frac{1}{2\pi i}\int_{\C_2}g_{\bs,\ws}(v).\label{equ:f_2}
\end{align}
The proof requires the following 5 steps. If $\ws,\ws'$ are such that 
\begin{enumerate}
 \item $\ws_x\neq 2n$ and $\ws_x\neq \ws_x'$, or $\ws_x=\ws_x'=2n$, then $(\Ks f^2)_{\ws,\ws'}=0$. 
 \item $\ws_x\neq 2n$ and $\ws_x=\ws_x'$, then $(\Ks f^2)_{\ws,\ws'}=-\II_{\{\ws=\ws'\}}.$ 
 \item $\ws_x\neq 2n$, then $(\Ks f^1)_{\ws,\ws'}=0$. 
 \item $\ws_x=2n$ and $\ws_x\neq \ws_x'$, then $(\Ks f^1)_{\ws,\ws'}=(\Ks f^2)_{\ws,\ws'}$.
 \item $\ws_x=2n$ and $\ws_x=\ws_x'$, then $(\Ks f^1)_{\ws,\ws'}=\II_{\{\ws=\ws'\}}.$
\end{enumerate}
Before proving each of the steps, let us show that they indeed allow to end the proof. 
If $\ws$ is such that:
\begin{itemize}
\item $\ws_x\in\{0,2,\dots,2n-2\}$, and $\ws_x\neq \ws_x'$, then by Point 1. we have $(\Ks f^2)_{\ws,\ws'}=0$, and by Point 3. $(\Ks f^1)_{\ws,\ws'}=0$, implying that $(\Ks\Ks^{-1})_{\ws,\ws'}=0$. 
\item $\ws_x\in\{0,2,\dots,2n-2\}$, and $\ws_x=\ws'_x$, then by Point 2. we have $(\Ks f^2)_{\ws,\ws'}=-\II_{\{\ws=\ws'\}}$, and by Point 3. $(\Ks f^1)_{\ws,\ws'}=0$, implying that $(\Ks\Ks^{-1})_{\ws,\ws'}=\II_{\{\ws=\ws'\}}$.
\item $\ws_x=2n$ and $\ws_x\neq \ws_x'$, then by Point 4, $(\Ks f^1)_{\ws,\ws'}=(\Ks f^2)_{\ws,\ws'}$, implying that $(\Ks\Ks^{-1})_{\ws,\ws'}=0$. 
\item $\ws_x=2n$ and $\ws_x=\ws_x'$ then by Point 1. we have $(\Ks f^2)_{\ws,\ws'}=0$, and by Point 5. $(\Ks f^1)_{\ws,\ws'}=\II_{\{\ws=\ws'\}}$, implying that $(\Ks\Ks^{-1})_{\ws,\ws'}=\II_{\{\ws=\ws'\}}$. 
\end{itemize}
We now turn to the proof of Points 1. to 5. 

1. When $\ws_x'>\ws_x$, all $\bs$-neighbors  of $\ws$ are on the left of $\ws'$, so that all terms of type $f_2$ are equal to 0 and $(\Ks f^2)_{\ws,\ws'}$ is trivially equal to 0 (note that there are four neighbors if $\ws_x\neq 0$ and two neighbors if $\ws_x=0$). If $\ws_x=\ws_x'=2n$, the same holds since then the two $\bs$-neighbors of $\ws$ are on the left of $\ws'$ and there are no right neighbors. 
When $\ws_x'<\ws_x$ and $\ws_x\neq 2n$, the four $\bs$-neighbors of $\ws$ are now on the right, and there are four contributions of type $f_2$. We have
\begin{align*}
(\Ks f^2)_{\ws,\ws'}&=\frac{1}{2\pi i}\sum_{\bs\sim\ws} \int_{\C_2}\Ks_{\ws,\bs}\, g_{\bs,\ws'}.
\end{align*}
Since the contour $\C_2$ in independent of $\bs,\ws'$, the sum can be moved inside the integral, and we have that $(\Ks f^2)_{\ws,\ws'}$ is equal to $0$ by Equation~\eqref{equ:kernel_g}.

2. Suppose that $\ws_x=\ws_x'$ and $\ws_x\neq 2n$. Let $\bs_1,\bs_2$ be the two neighbors of $\ws$ on the right, from bottom to top, and $\fs_1,\fs_2,\fs_3$ be the dual vertices as in Figure~\ref{fig:fig_proof_Kinv_1}.  

\begin{figure}[H]
  \centering
  \begin{overpic}[width=6cm]{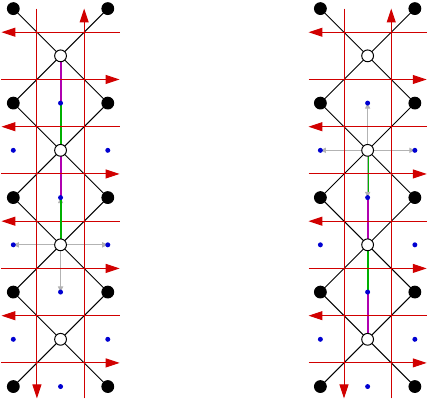}
  \put(15,28){\scriptsize $\alpha$} 
  \put(19,33){\scriptsize $\gamma$} 
  \put(15,42){\scriptsize $\beta$} 
  \put(6,32){\scriptsize $\delta$} 
  \put(9,36){\scriptsize $\ws$}
  \put(8,80){\scriptsize $\ws'$}
  \put(27,25){\scriptsize $\bs_1$}
  \put(27,48){\scriptsize $\bs_2$}
  \put(12,21){\scriptsize $\fs_1$} 
  \put(27,36){\scriptsize $\fs_2$} 
  \put(10,49){\scriptsize $\fs_3$}
  \put(87,50){\scriptsize $\alpha$} 
  \put(91,57){\scriptsize $\gamma$} 
  \put(87,64){\scriptsize $\beta$} 
  \put(77,56){\scriptsize $\delta$} 
  \put(80,60){\scriptsize $\ws$}
  \put(80,15){\scriptsize $\ws'$}
  \end{overpic}
\caption{Zeros and poles of $g_{\ws,\ws'}(v)$ when $\ws_x=\ws_x'$, $\ws_x\neq 2n$. Left: $\ws'$ is above $\ws$. Right: $\ws'$ is below $\ws$. Angles corresponding to zeros, resp. poles, are pictured in green, resp. magenta.}\label{fig:fig_proof_Kinv_1}  
\end{figure}

Using that the contour of integration $\C_2$ is independent of $\bs_1,\bs_2,\ws'$, and using the product structure of the meromorphic form $g$, we obtain
\begin{align*}
(\Ks f^2)_{\ws,\ws'}=\frac{1}{2\pi i}\int_{\C_2}\Bigl[\sum_{i=1}^2 \Ks_{\ws,\bs_i}g_{\bs_i,\ws}(v)\Bigr] g_{\ws,\ws'}(v).
\end{align*}
To simplify notation, we write the four angles around $\ws$ as $\alpha,\gamma,\beta,\delta$ going cclw starting from the bottom (omitting indices). From Fay's identity~\eqref{equ:Fay} we have
\begin{align}\label{equ:use_Fay_1}
\sum_{i=1}^2 \Ks_{\ws,\bs_i}g_{\bs_i,\ws}(v)&=\omega_{\gamma-\alpha}+\sum_{\ell=1}^g \Bigl(\frac{\partial \log\theta}{\partial z_\ell}(t+\mapd(\fs_3))-\frac{\partial \log\theta}{\partial z_\ell}(t+\mapd(\fs_2))\Bigr)\omega_\ell \nonumber \\
&\quad \quad \quad \quad+\omega_{\beta-\gamma}+\sum_{\ell=1}^g \Bigl(
\frac{\partial \log\theta}{\partial z_\ell}(t+\mapd(\fs_2))-\frac{\partial \log\theta}{\partial z_\ell}(t+\mapd(\fs_1))\Bigr)\omega_\ell\nonumber\\
&=\omega_{\beta-\alpha} +\sum_{\ell=1}^g \Bigl(\frac{\partial \log\theta}{\partial z_\ell}(t+\mapd(\fs_3))-\frac{\partial \log\theta}{\partial z_\ell}(t+\mapd(\fs_1))\Bigr)\omega_\ell.
\end{align}
As a consequence,
{\small 
\begin{align*}
(\Ks f^2)_{\ws,\ws'} &=\frac{1}{2\pi i}\Bigl(\int_{\C_2} g_{\ws,\ws'}(v)\,\omega_{\beta-\alpha}+
\sum_{\ell=1}^g \Bigl(\frac{\partial \log\theta}{\partial z_\ell}(t+\mapd(\fs_3))-\frac{\partial \log\theta}{\partial z_\ell}(t+\mapd(\fs_1))\Bigr) \int_{\C_2}g_{\ws,\ws'}(v)\,\omega_\ell\Bigr).
\end{align*}
}

Returning to the definition of the form $g$, we know that 
the zeros, resp. poles, of $g_{\ws,\ws'}(v)$ in $\C_2$ are the angles between $\ws$ and $\ws'$ of type
\begin{itemize}
 \item $(\beta_j)$ (including $\beta$) , resp. $(\alpha_j)$ (excluding $\alpha$), when $\ws'$ is above $\ws$, see Figure~\ref{fig:fig_proof_Kinv_1} (left),
 \item $(\alpha_j)$ (including $\alpha)$, resp. $(\beta_j)$ (excluding $\beta$), when $\ws'$ is below $\ws$, see Figure~\ref{fig:fig_proof_Kinv_1} (right).
\end{itemize}
Let us first consider the term $\frac{1}{2\pi i}\int_{\C_2} g_{\ws,\ws'}(v)\, \omega_{\beta-\alpha}\,$.
\begin{itemize}
 \item when $\ws'$ is above $\ws$, the pole at $\beta$ of $\omega_{\beta-\alpha}$ is cancelled by the zero at $\beta$ of $g_{\ws,\ws'}(v)$, and the integrand $\omega_{\beta-\alpha}\,g_{\ws,\ws'}(v)$ has as poles a subset of $\{\alpha_1,\dots,\alpha_n\}$. Since the contour $\C_2$ contains all the angles $\{\alpha_1,\dots,\alpha_n\}$, we obtain 0. 
 \item when $\ws'$ is below $\ws$, the pole at $\alpha$ of $\omega_{\beta-\alpha}$ is cancelled by the zero at $\alpha$ of $g_{\ws,\ws'}(v)$, and the integrand has as  poles a subset of $\{\beta_1,\dots,\beta_n\}$. Since the contour $\C_2$ contains none of these poles, we also obtain 0. 
 \item when $\ws=\ws'$, then $g_{\ws,\ws}(v)=1$, and the contour $\C_2$ contains the pole $\alpha$ of $\omega_{\beta-\alpha}$, which yields that $\frac{1}{2\pi i}\int_{\C_2} \omega_{\beta-\alpha}=-1$. 
\end{itemize}
Let us now consider the term $\int_{\C_2}g_{\ws,\ws'}(v)\,\omega_\ell$. The contour
$\C_2$ either contains all poles of $g_{\ws,\ws'}$ (when $\ws'$ is above $\ws$)
or none of them (if $\ws'$ is below or equal to $\ws$), and the integral
of the form $\omega_\ell$ around any trivial closed contour is 0, so that the second term is always equal to 0. Summarizing, we have proved Point 2.
\begin{equation}\label{equ:f2_summary}
(\Ks f^2)_{\ws,\ws'}=\frac{1}{2\pi i}\int_{\C_2}\Bigl[\sum_{i=1}^2 \Ks_{\ws,\bs_i}g_{\bs_i,\ws'}(v)\Bigr]=-\II_{\{\ws=\ws'\}}.
\end{equation}

3. Using that the contours of integration $\C_1,\C_2$ are independent of $\bs,\ws'$, we have, for every pair of white vertices $(\ws,\ws')$, 
\begin{align}
&(\Ks f^1)_{\ws,\ws'}=\nonumber \\
&=\frac{1}{(2\pi i)^2}\frac{1}{\theta(p)}\sum_{\bs\sim\ws} \Ks_{\ws,\bs}\int_{\C_2}\int_{\C_1}\frac{\theta(p+(v-u))}{E(u,v)}g_{\bs,0}(u)g_{0,\ws'}(v)\prod_{j=1}^n \frac{E(\beta_j,u)}{E(\delta_j,u)}\frac{E(\delta_j,v)}{E(\beta_j,v)}\nonumber \\
&=\frac{1}{(2\pi i)^2}\frac{1}{\theta(p)}\int_{\C_2}g_{0,\ws'}(v)\prod_{j=1}^n \frac{E(\delta_j,v)}{E(\beta_j,v)}\cdot \nonumber\\
&\hspace{2.5cm} \cdot \int_{\C_1}\frac{\theta(p+(v-u))}{E(u,v)}\prod_{j=1}^n \frac{E(\beta_j,u)}{E(\delta_j,u)}\Bigl[\sum_{\bs\sim\ws} \Ks_{\ws,\bs} g_{\bs,0}(u)\Bigr].\label{equ:split_integral}
\end{align}

As a consequence, when $\ws_x\notin \{0,2n\}$, this is equal to 0 by Equation~\eqref{equ:kernel_g}. When $\ws_x=0$, one can insert a column of black vertices on the left, and an angle $\delta_0$ (in the sector containing $\{\delta_1,\dots,\delta_n\}$). In this way, the white vertex $\ws$ has its four $\bs$-neighbors: $\bs_1,\bs_2$ present in the original Aztec diamond, and $\bs_3,\bs_4$ the two additional ones. Note that 
defining $f^1_{\bs_i,\ws}$ ($i\in\{3,4\}$) using Formula~\eqref{equ:f_1} gives 0 in this case. Indeed, 
$g_{\bs_i,0}(u)$ has as poles $\delta_0$ and a subset of the angles $(\beta_j)$, implying that the contour $\C_1$ (which contains all of the angles $\{\gamma_1,\dots,\gamma_n\}$) contains no pole of the integrand $g_{\bs_{i},0}(u)\frac{\theta(p+(v-u))}{E(u,v)}\prod_{j=1}^n \frac{E(u,\beta_j)}{E(u,\delta_j)}$, yielding 0. The proof can then be concluded using Equation~\eqref{equ:kernel_g} again. 

4. Suppose $\ws_x=2n$, and let $\bs_3, \bs_4$ be the two neighbors of $\ws$ on the left, from bottom to top, and $\fs_1,\fs_3,\fs_4$ be the dual vertices as in Figure~\ref{fig:fig_proof_Kinv_2}. 

\begin{figure}[H]
  \centering
  \begin{overpic}[width=6cm]{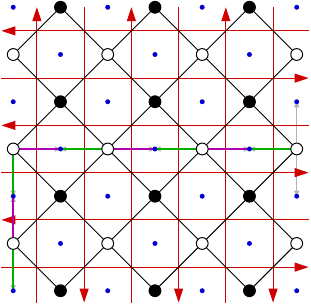}
  \put(98,43){\scriptsize $\alpha$} 
  \put(98,60){\scriptsize $\beta$} 
  \put(82,51){\scriptsize $\delta_n$} 
  \put(100,50){\scriptsize $\ws$}
 
  \put(73,35){\scriptsize $\bs_3$}
  \put(73,63){\scriptsize $\bs_4$}
  \put(95,29){\scriptsize $\fs_1$} 
  \put(74,50){\scriptsize $\fs_4$} 
  \put(95,67){\scriptsize $\fs_3$}

  \put(-5,12){\scriptsize $\alpha_1$} 
  \put(8,-3){\scriptsize $\gamma_1$} 
  \put(-5,28){\scriptsize $\beta_1$} 
  \put(26,-4){\scriptsize $\delta_1$} 
  \put(0,0){\scriptsize $0$}
  \end{overpic}
\caption{Zeros and poles of $g_{\ws,\ws'}(v)$ when $\ws_x=\ws_x'$, $\ws_x\neq 2n$. Left: $\ws'$ is above $\ws$. Right: $\ws'$ is below $\ws$. Angles corresponding to zeros, resp. poles, are pictured in green, resp. magenta.}\label{fig:fig_proof_Kinv_2}  
\end{figure}

To simplify notation, let us denote by 
$\alpha,\delta_n,\beta$ the angles around $\ws$ in clockwise order (omitting the indices for $\alpha,\beta$). Writing $(\Ks f^1)_{\ws,\ws'}$ as in Equation~\eqref{equ:split_integral}, we first consider the part containing the integral over $\C_1$. 
\begin{align}\label{equ:int_C1}
\int_{\C_1}\frac{\theta(p+(v-u))}{E(u,v)}\Bigl[ \sum_{i=3}^4 \Ks_{\ws,\bs_i}g_{\bs_i,\ws}(u)\Bigr]g_{\ws,0}(u)\prod_{j=1}^n \frac{E(\beta_j,u)}{E(\delta_j,u)}. 
\end{align}
Similarly to Equation~\eqref{equ:use_Fay_1}, using Fay's identity~\eqref{equ:Fay} gives
\begin{align*}
\sum_{i=3}^4 \Ks_{\ws,\bs_i}g_{\bs_i,\ws}(v)&=\omega_{\alpha-\beta} +\sum_{\ell=1}^g \Bigl(\frac{\partial \log\theta}{\partial z_\ell}(t+\mapd(\fs_1))-\frac{\partial \log\theta}{\partial z_\ell}(t+\mapd(\fs_3))\Bigr)\omega_\ell.
\end{align*}
Note that by definition of $g$, all poles of $g_{\bs_i,0}(u)$ are on $A_0$. Now, the term $g_{\ws,0}(u)$ contains
\begin{itemize}
\item as poles on $A_0$: all the angles $\gamma_1,\dots,\gamma_n$ and the angles of type $(\beta_j)$ from $\ws$ to the bottom boundary (and in particular not the angle $\beta$),
\item as zeros on $A_0$: the angles $\delta_1,\dots,\delta_{n}$ and the angles of type $(\alpha_j)$ from $\ws$ to the bottom boundary (and in particular the angle $\alpha$),
\end{itemize}
so the product $g_{\ws,0}(u)\,\omega_{\alpha-\beta}$ contains as poles on $A_0$: a subset of the angles of type $(\beta_j)$, no angle of type $(\alpha_j)$ (since the pole $\alpha$ of $\omega_{\alpha-\beta}$ is cancelled by the zero $\alpha$ of $g_{\ws,0}(u)$), all the angles $\gamma_1,\dots,\gamma_n$; note that the set of poles of $g_{\ws,0}(u)\omega_\ell$ has the same property. As a consequence when each of theses terms gets multiplied by $\frac{\theta(p+(v-u))}{E(u,v)}\prod_{j=1}^n \frac{E(\beta_j,u)}{E(\delta_j,u)}$; all the poles involving angles $(\beta_j)$ are cancelled, and moreover, the new poles involving $\delta_1,\dots,\delta_n$ are cancelled by the same zeros in $g_{\ws,0}(u)$. Summarizing, in the integrand
\[
\frac{\theta(p+(v-u))}{E(u,v)}\Bigl[ \sum_{i=3}^4 \Ks_{\ws,\bs_i}g_{\bs_i,\ws}(u)\Bigr]g_{\ws,0}(u)\prod_{j=1}^n \frac{E(\beta_j,u)}{E(\delta_j,u)},
\]
there remains as poles on $\Sigma$: all the angles $\gamma_1,\dots,\gamma_n$ and the pole at $u=v$. By definition, the contour of integration $\C_1$ contains all of the angles $\{\gamma_1,\dots,\gamma_n\}$, and the pole $v$ (which lives on $\C_2$) is outside of $\C_1$.
Since we are integrating a meromorphic form on a compact surface (implying that the sum of the residues is equal to 0), the integral~\eqref{equ:int_C1} is equal to $-2\pi i$ times the residue at $u=v$.
Noting that the residue at $u=v$ of $\frac{1}{E(u,v)}$ is equal to $-1$, we obtain:
\begin{multline*}
  \frac{1}{2\pi i }\frac{1}{\theta(p)}\int_{\C_1} \frac{\theta(p+(v-u))}{E(u,v)}\Bigl[ \sum_{i=3}^4 \Ks_{\ws,\bs_i}g_{\bs_i,\ws}(u)\Bigr]g_{\ws,0}(u)\prod_{j=1}^n \frac{E(\beta_j,u)}{E(\delta_j,u)}\\
  =\Bigl[ \sum_{i=3}^4 \Ks_{\ws,\bs_i}g_{\bs_i,\ws}(v)\Bigr]g_{\ws,0}(v)\prod_{j=1}^n \frac{E(\beta_j,v)}{E(\delta_j,v)}
  =\sum_{i=3}^4 \Ks_{\ws,\bs_i}g_{\bs_i,0}(v)
  \prod_{j=1}^n \frac{E(\beta_j,v)}{E(\delta_j,v)}.
\end{multline*}
Plugging this back into~\eqref{equ:split_integral} yields,
\begin{align}
  (\Ks f^1)_{\ws,\ws'}&=\frac{1}{2\pi i}\int_{\C_2}g_{0,\ws'}(v)\prod_{j=1}^n \frac{E(\delta_j,v)}{E(\beta_j,v)} \sum_{i=3}^4 \Ks_{\ws,\bs_i}g_{\bs_i,0}(v)\prod_{j=1}^n \frac{E(\beta_j,v)}{E(\delta_j,v)}\nonumber\\
                      &=\frac{1}{2\pi i}\int_{\C_2}\sum_{i=3}^4 \Ks_{\ws,\bs_i}g_{\bs_i,\ws'}(v).\label{equ:Case4to5}
\end{align}
Now, suppose moreover that $\ws_x\neq \ws_x'$, then the two $\bs$-neighbors of $\ws$ are on the right of~$\ws$, and we have that 
\begin{align*}
  (\Ks f^1)_{\ws,\ws'}=(\Ks f^2)_{\ws,\ws'}.
\end{align*}

5. Suppose that $\ws_x=2n$ and $\ws_x=\ws_x'$. Then, by Equation~\eqref{equ:Case4to5}, we have
\begin{align*}
(\Ks f^1)_{\ws,\ws'}=\frac{1}{2\pi i}\int_{\C_2}\sum_{i=3}^4 \Ks_{\ws,\bs_i}g_{\bs_i,\ws'}(v).
\end{align*}
Using the argument of Point 2. with black vertices on the left rather than on the right allows us to conclude that $(\Ks f^1)_{\ws,\ws'}=\II_{\{\ws=\ws'\}}$. \qedhere
\end{proof}

\begin{exm}
As an example of application, let us make explicit the case where angles in each family are constant, \emph{i.e.}, suppose that $\alphab\equiv \alpha,\betab=\beta,\gammab=\gamma,\deltab=\delta$, for some angles $\alpha,\beta,\gamma,\delta\in A_0$ satisfying the cyclic order $\alpha<\gamma<\beta<\delta$. Recalling the coordinate notation $\ws=(\ws_x,\ws_y)$, $\bs=(\bs_x,\bs_y)$, we obtain
\begin{multline} 
\Ks^{-1}_{\bs,\ws}=
\frac{1}{(2\pi i)^2}\frac{1}{\theta(p)}\int_{\C_2}\int_{\C_1}\frac{\theta(p+(v-u))}{E(u,v)}\cdot \\
\quad \cdot\ \theta(-t+u-\mapd(\bs))
\frac{%
  E(\alpha,u)^{\frac{\bs_y}{2}}
  E(\beta,u)^{n-\frac{\bs_y}{2}}
}{%
  E(\gamma,u)^{\frac{\bs_x+1}{2}}
  E(\delta,u)^{n-{\frac{\bs_x-1}{2}}}
}
\theta(t+v+\mapd(\ws))
\frac{E(\gamma,v)^{\frac{\ws_x}{2}}E(\delta,v)^{n-\frac{\ws_x}{2}}}
{E(\beta,v)^{n-\frac{\ws_y-1}{2}}E(\alpha,v)^{\frac{\ws_y+1}{2}}} 
\\
\quad -
\II_{\{\bs \text{ right of }\ws\}}\frac{1}{2\pi i}\int_{\C_2}
\theta(-t+v-\mapd(\bs))\theta(t+v+\mapd(\ws))\frac{E(\alpha,v)^{\frac{\bs_y-\ws_y-1}{2}}}{E(\beta,v)^{\frac{\bs_y-\ws_y+1}{2}}}\frac{E(\delta,v)^{\frac{\bs_x-\ws_x-1}{2}}}{E(\gamma,v)^{\frac{\bs_x-\ws_x+1}{2}}},
\label{eq:Kinv_homog}
\end{multline}
where $p= n(\delta-\beta)-t-\mapd(0)$, and 
\begin{equation*}
  \mapd(\bs)= \frac{\bs_y}{2}(\beta-\alpha) + \frac{\bs_x+1}{2}\gamma -
\frac{\bs_x-1}{2}\delta, \quad
  \mapd(\ws) = \frac{\ws_x}{2}(\gamma-\delta) - \frac{\ws_y+1}{2}\alpha +
  \frac{\ws_y-1}{2}\beta.
\end{equation*}

In particular, if $\bs$ is the south-west neighbor of $\ws$, meaning that
$(\ws_x,\ws_y)=(2i,2j+1)$ and $(\bs_x,\bs_y)=(2i-1,2j)$ for some $1\leq i\leq n$
and $0\leq j\leq n-1$, Formula~\eqref{eq:Kinv_homog} reduces further to
\begin{multline}
\Ks^{-1}_{\bs,\ws}=
\frac{1}{(2\pi
i)^2}\frac{1}{\theta(p)}\int_{\C_2}\int_{\C_1}\frac{\theta(p+(v-u))}{E(u,v)}\cdot
\theta(-t+u-\mapd(\bs))
\theta(t+v+\mapd(\ws))
\\
\cdot
\frac{%
  E(\alpha,u)^{j}
  E(\beta,u)^{n-j}
}{%
  E(\gamma,u)^{i}
  E(\delta,u)^{n-i+1}
}
\frac{%
  E(\gamma,v)^{i}
  E(\delta,v)^{n-i}
}{%
  E(\beta,v)^{n-j}
  E(\alpha,v)^{j+1}
} 
\label{eq:Kinv_homog_sw}
\end{multline}
where $\mapd(\bs)=j(\beta-\alpha)+i(\gamma-\delta)+\delta$ and
$\mapd(\ws)=j(\beta-\alpha)+i(\gamma-\delta)-\alpha$.
\end{exm}

\section{Partition function}\label{sec:partition}

The main result of this section is Theorem~\ref{thm:partition_function}
consisting of an induction formula for the partition function of the Aztec
diamond with Fock's weights, thus proving that the partition function can always
be expressed in product form;
this is quite a surprising fact since, a priori, it is defined as the
determinant of a matrix. As specific cases of the genus 0 case, we recover
Stanley's celebrated formula~\cite{Propp_talk, BYYangPhD} as well as the fact
that the number of dimer configurations of an Aztec diamond of size $n$ is equal
to $2^{\frac{n(n+1)}{2}}$~\cite{EKLP}. Our proof is inspired from one of the
inductive arguments used to establish the formula
$2^{\frac{n(n+1)}{2}}$~\cite{Propp_talk}; it goes through in this very general
setting because Fock's weights are invariant under spider moves, and
contraction/expansion of a degree 2
vertex~\cite{Fock,BCdT:elliptic,BCdT:genusg}. 

This section is organised as follows. In Section~\ref{sec:partition_function},
we state Theorem~\ref{thm:partition_function} giving the product formula for the
partition function, then we state and prove Corollary~\ref{cor:genus_0}
specifying Theorem~\ref{thm:partition_function} to the case of genus 0; finally,
in Corollary~\ref{cor:Stanley}, we explain how it allows to recover Stanley's
formula~\cite{Propp_talk,BYYangPhD}. Section~\ref{sec:proof_partition} contains the proof Theorem~\ref{thm:partition_function}. 

\subsection{Partition function formula}\label{sec:partition_function}

Consider the Aztec diamond $\Az_n$ with angles $\alphab,\betab,\gammab,\deltab\in A_0$ assigned to oriented train-tracks, satisfying the cyclic order $\alphab<\gammab<\betab<\deltab$. Consider the dimer model on $\Az_n$ with Fock's weight function defined in Equation~\eqref{eq:Focks_weight}. Suppose that the value of the discrete Abel map at the origin $0=(0,0)$ is $\mapd(0)=d$, for some $d\in \Pic^{0}(\Sigma)$.
Let us denote the corresponding partition function by 
\begin{align}\label{equ:def_partition}
Z_n(\alphab,\betab,\gammab,\deltab;d)=Z_n((\alpha_j)_{j=1}^n,(\beta_j)_{j=1}^n,(\gamma_j)_{j=1}^n,(\delta_j)_{j=1}^n;d)=\sum_{\Ms\in\M(\Az_n)} \prod_{\es\in \Ms}|\Ks_{\ws,\bs}|. 
\end{align}

Recall from Section~\eqref{sec:Aztec_dimer} that coordinates of dual vertices of $\Fs_n$ are either both odd or both even. This implies that the set $\Fs_n$ can naturally
split into:
\[
\odd_n,\quad\text{and }\ \even_n=\inteven_n\sqcup(\bry_n\setminus\corner_n)\sqcup\corner_n,
\]
where even vertices are furthermore split according to whether they are interior, boundary or corner vertices. 

Note that an odd face $\fs$ of $\Fs_n$ is surrounded by angles $\alpha_i,\beta_i,\gamma_j,\delta_j$ for some $i,j\in\{1,\dots,n\}$. In order to simplify notation, in Theorem~\ref{thm:partition_function} below we denote them generically by $\alpha,\beta,\gamma,\delta$, see Figure~\ref{fig:notation_partition}. We are now ready to state the main result of this section. 

\begin{figure}[h!]
  \centering
  \begin{overpic}[width=2.5cm]{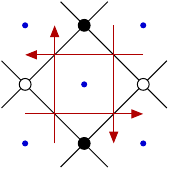}
  \put(6,28){\scriptsize $\alpha$} 
  \put(6,68){\scriptsize $\beta$} 
  \put(26,5){\scriptsize $\gamma$} 
  \put(68,5){\scriptsize $\delta$} 
  \put(47,38){\scriptsize $\fs$} 
  \end{overpic}
\caption{Generic notation $\alpha,\beta,\gamma,\delta$ for angles around a face $\fs$ corresponding to a dual vertex with both odd coordinates.}\label{fig:notation_partition}  
\end{figure}

\begin{thm}\label{thm:partition_function} 
For every $n\geq 1$, the partition function of the Aztec diamond $\Az_n$ with Fock's weights satisfies the following recurrence:
{\small 
\begin{align*}
Z_n(\alphab,\betab,\gammab,\deltab;d)&\cdot 
\prod_{f\in\odd_n}\frac{\theta(t+\mapd(\fs))}{\theta(t+\mapd(\fs)+\alpha+\beta-\gamma-\delta)}\prod_{\fs\in\bry_n}\theta(t+\mapd(\fs))=\\
=Z_{n-1}&((\alpha_j)_{j=1}^{n-1},(\beta_j)_{j=2}^n,(\gamma_j)_{j=1}^{n-1},(\delta_j)_{j=2}^n;d+\beta_1-\delta_1)\cdot
\Bigl[\prod_{j=1}^{n}\ |E(\alpha_j,\beta_j)E(\gamma_j,\delta_j)|\Bigr],
\end{align*}}
with the convention that $Z_0=1$. 
\end{thm}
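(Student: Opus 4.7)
The plan is to follow a domino-shuffling reduction in the spirit of Propp: a sequence of local moves transforms $\Az_n$ into $\Az_{n-1}$, and each move alters the partition function by an explicit factor. The key input is that Fock's weights form a coherent structure on the class of minimal bipartite graphs, transforming in a prescribed way under spider moves and under contractions of degree-2 vertices (as established in~\cite{BCdT:genusg, Fock, BCdT:elliptic}). My idea is to apply simultaneously a spider move at every interior face $\fs \in \odd_n$ --- these faces form an independent set, so the moves do not interfere --- and then to perform successive contractions of the resulting degree-2 vertices to reach $\Az_{n-1}$.

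First I would pin down the transformation rule of Fock's Kasteleyn matrix under a spider move at a face with surrounding angles $(\alpha,\beta,\gamma,\delta)$: the Abel map value at the central face transforms from $\mapd(\fs)$ to $\mapd(\fs)+\alpha+\beta-\gamma-\delta$, and by a short computation using the explicit formula~\eqref{eq:Focks_weight}, the Kasteleyn determinant gets multiplied by $\theta(t+\mapd(\fs)+\alpha+\beta-\gamma-\delta)/\theta(t+\mapd(\fs))$. Taking the product over $\odd_n$ yields the first product of theta functions on the left-hand side of the recurrence.

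Next I would track the contractions. After all the spider moves have been performed, vertices adjacent to the boundary faces of $\bry_n$ become degree 2, and contracting them produces a factor $\theta(t+\mapd(\fs))$ per boundary face --- this is the second product on the left. The edges incident to the outermost rows and columns, carrying the train-track angles $\beta_1$ and $\delta_1$ that disappear in the reduction, become forced in every dimer configuration and contribute the product of $|E(\alpha_j,\beta_j) E(\gamma_j,\delta_j)|$ on the right-hand side. The reduced graph is identified as $\Az_{n-1}$ with relabeled angles $(\alpha_j)_{j=1}^{n-1}, (\beta_j)_{j=2}^n, (\gamma_j)_{j=1}^{n-1}, (\delta_j)_{j=2}^n$; the reference dual vertex for the Abel map has been displaced by crossing one $\beta_1$-track (positively) and one $\delta_1$-track (negatively), so its new base value is $d + \beta_1 - \delta_1$. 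The base case $Z_0 = 1$ is trivial.

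The main obstacle is the careful combinatorial and gauge bookkeeping: verifying that the simultaneous spider moves at all $\odd_n$-faces are well-defined, that the ensuing contractions do produce exactly the Aztec diamond $\Az_{n-1}$ with the stated relabeling, and that the various multiplicative factors combine exactly as in the theorem. In particular, checking that the prime-form factors on the right correspond to the weights of the edges that become forced, and that the shift $d \mapsto d + \beta_1 - \delta_1$ correctly compensates for the change of reference point, requires a detailed --- but essentially mechanical --- case analysis.
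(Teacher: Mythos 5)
Your overall strategy --- reducing $\Az_n$ to $\Az_{n-1}$ by spider moves at the odd faces followed by contractions of degree-two vertices, tracking the multiplicative effect of each local move on the partition function --- is exactly the route taken in the paper (Section~\ref{sec:proof_partition}, via Lemma~\ref{lem:elem_moves}). However, as written your accounting of the factors would not close. The transformation rule you state for the spider move is incomplete: the correct factor is
\[
\frac{\prod_{j=1}^4\theta(t+\mapd(\fs_j))}{|E(\alpha,\beta)E(\gamma,\delta)|}\cdot\frac{\theta(t+\mapd(\fs)+\alpha+\beta-\gamma-\delta)}{\theta(t+\mapd(\fs))},
\]
i.e.\ besides the ratio of theta functions at the central face it also involves the theta functions at the four \emph{adjacent} faces and the reciprocal of the prime forms. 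Omitting the first fraction loses a factor of $\theta^4$ at each interior even face, $\theta^2$ (resp.\ $\theta$) at each non-corner boundary (resp.\ corner) face, and a factor $\prod_j|E(\alpha_j,\beta_j)E(\gamma_j,\delta_j)|^{-n}$ overall. You also skip the intermediate step in which, after the spider moves, all boundary edges become forced and must be factored out before any contraction can be performed.

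Consequently, the attribution of the remaining products to single sources is wrong. The prime-form product on the right-hand side is \emph{not} simply the weight of the forced edges: per train-track pair, the spider moves contribute $|E|^{-n}$, the forced boundary edges $|E|^{2}$, and the contractions $|E|^{n-1}$ (each contraction carries a factor $|E(\alpha,\beta)|/(\theta(t+\mapd(\fs))\theta(t+\mapd(\fs')))$, not a bare theta), and only the net power $|E|^{1}$ survives. Likewise $\prod_{\fs\in\bry_n}\theta(t+\mapd(\fs))$ arises from a three-way cancellation: $+1$ (corner) or $+2$ (non-corner boundary) from the spider moves at adjacent faces, $-2$ from factoring out the forced boundary edges, and $-1$ per non-corner boundary face from the contractions, while the $\theta^4$ produced at interior even faces is cancelled entirely by the contractions. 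None of this invalidates the approach --- it is the paper's --- but the ``short computation'' you defer is precisely where the theorem lives, and with the spider-move factor you wrote down the procedure would yield a different recurrence.
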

\begin{rem}\leavevmode
\begin{itemize}
\item On the right-hand-side we have not used the compact notation $Z_{n-1}(\alphab,\betab,\gammab,\deltab;d+\beta_1-\delta_1)$ because not all angles are indexed by $1,\dots,n-1$: it alternates between $1,\dots,n-1$ and $2,\dots,n$. 
\item A consequence of Theorem~\ref{thm:partition_function} is that the partition function admits a product form; we do not make this explicit since it would involve heavy indices notation. Another remark is that having a product form, and assuming the weights to be periodic for example, allows to have an explicit formula for the \emph{free energy} of the model, defined as minus the exponential growth rate of the partition function. 
\end{itemize}
\end{rem}
The proof of Theorem~\ref{thm:partition_function} relies on the evolution of the partition function under local moves, it is postponed until Section~\ref{sec:proof_partition}. We first turn to interesting corollaries; the first one is the specification of Theorem~\ref{thm:partition_function} to the genus 0 case. 

\begin{cor}[Genus 0 case]\label{cor:genus_0}
Assume that the underlying M-curve $\Sigma$ is the Riemann sphere $\hat{\CC}$, implying that the weights are given by Kenyon's critical weights~\eqref{equ:critical_weights}. Then, the partition function of the Aztec diamond is equal to
{\small \begin{align}
Z_n&(\alphab,\betab,\gammab,\deltab)=2^{n(n+1)}
\prod_{\ell=0}^{n-1} \prod_{j=1}^{n-\ell}|\sin(\betabar_{j+\ell}-\alphabar_{j})\sin(\deltabar_{j+\ell}-\gammabar_j)|\label{equ:genus_0_gen_0}\\
=&2^{n(n+1)}\prod_{\ell=0}^{n-1} \prod_{j=1}^{n-\ell}|\sin(\deltabar_{j+\ell}-\betabar_{j+\ell})\sin(\gammabar_j-\alphabar_j)+\sin(\betabar_{j+\ell}-\gammabar_j)\sin(\deltabar_{j+\ell}-\alphabar_j)|.\label{equ:genus_0_gen}
\end{align}}
\begin{itemize}
\item If furthermore, for all $\alphab\equiv\alpha,\betab\equiv\beta,\gammab\equiv\gamma,\deltab\equiv\delta$ for some $\alpha<\gamma<\beta<\delta\in A_0=S^1$, then
\begin{align*}
Z_n(\alphab,\betab,\gammab,\deltab)
=2^{n(n+1)}|\sin(\betabar-\alphabar)\sin(\deltabar-\gammabar)|^{\frac{n(n+1)}{2}}.
\end{align*}
\item If furthermore,
  $\bar{\beta}-\bar{\alpha}=\bar{\gamma}-\bar{\delta}=\frac{1}{2}$, then 
  $\bar{\gamma}-\bar{\alpha}=\bar{\delta}-\bar{\beta}:=\rho\in(0,\frac{1}{2}),
  \ \bar{\beta}-\bar{\gamma}=\bar{\alpha}+1-\bar{\delta}=\frac{1}{2}-\rho\in(0,\frac{\pi}{2})$, and
\begin{align}\label{equ:Propp}
Z_n(\alphab,\betab,\gammab,\deltab)=2^{n(n+1)}.
\end{align}
\end{itemize}
\end{cor}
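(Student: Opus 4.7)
The plan is to apply Theorem~\ref{thm:partition_function} iteratively and then specialize. In the genus 0 setting, $\Sigma=\hat{\CC}$, $A_0=S^1$, the Riemann theta function is the constant 1, and the prime form reduces to $E(u,v)=v-u$. Consequently every $\theta$-factor in the recursion of Theorem~\ref{thm:partition_function} collapses to 1, and with the parameterization $\alpha=e^{2i\bar\alpha}$ one has $|E(\alpha,\beta)|=|e^{2i\bar\beta}-e^{2i\bar\alpha}|=2|\sin(\bar\beta-\bar\alpha)|$. The recursion thus reduces to
\[
Z_n(\alphab,\betab,\gammab,\deltab)=4^n\prod_{j=1}^n|\sin(\bar\beta_j-\bar\alpha_j)\sin(\bar\delta_j-\bar\gamma_j)|\cdot Z_{n-1}\bigl((\alpha_j)_{j=1}^{n-1},(\beta_j)_{j=2}^n,(\gamma_j)_{j=1}^{n-1},(\delta_j)_{j=2}^n\bigr).
\]

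I would then iterate this $n$ times down to $Z_0=1$, tracking the fact that after $\ell$ iterations the surviving $\alpha$- and $\gamma$-indices are $1,\dots,n-\ell$, while the $\beta$- and $\delta$-indices have been shifted by $\ell$, so that the surviving pairs are $(\bar\alpha_j,\bar\beta_{j+\ell})$ and $(\bar\gamma_j,\bar\delta_{j+\ell})$. At step $\ell+1$ the multiplicative contribution is $4^{n-\ell}\prod_{j=1}^{n-\ell}|\sin(\bar\beta_{j+\ell}-\bar\alpha_j)\sin(\bar\delta_{j+\ell}-\bar\gamma_j)|$. Summing the exponents of 4 gives $\sum_{\ell=0}^{n-1}(n-\ell)=\tfrac{n(n+1)}{2}$, whence $4^{n(n+1)/2}=2^{n(n+1)}$ factors out and Formula~\eqref{equ:genus_0_gen_0} follows.

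The alternative form~\eqref{equ:genus_0_gen} is obtained from~\eqref{equ:genus_0_gen_0} via the elementary four-angle identity
\[
\sin(b-a)\sin(d-c)=\sin(d-b)\sin(c-a)+\sin(b-c)\sin(d-a),
\]
applied with $(a,b,c,d)=(\bar\alpha_j,\bar\beta_{j+\ell},\bar\gamma_j,\bar\delta_{j+\ell})$. This identity follows by a one-line product-to-sum computation: both sides equal $\tfrac12[\cos(b-a+c-d)-\cos(b+d-a-c)]$. The constant-angle bullet is then immediate, since each of the $\tfrac{n(n+1)}{2}$ factors in~\eqref{equ:genus_0_gen_0} equals $|\sin(\bar\beta-\bar\alpha)\sin(\bar\delta-\bar\gamma)|$; and the last bullet follows because the symmetric conditions on the angles force each $|\sin(\bar\beta-\bar\alpha)|=|\sin(\bar\delta-\bar\gamma)|=1$, collapsing the product to $1$.

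No serious obstacle arises here: Theorem~\ref{thm:partition_function} does the substantive work, and what remains is a clean iteration together with a one-line trigonometric identity. The only point requiring a bit of care is correctly tracking the index shift during the iteration, which reflects the asymmetry of the recursion (it drops $\alpha_n,\gamma_n$ on one side and $\beta_1,\delta_1$ on the other), and verifying the product telescopes as claimed.
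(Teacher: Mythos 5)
Your proposal is correct and follows essentially the same route as the paper: specialize the recurrence of Theorem~\ref{thm:partition_function} to genus 0 (all $\theta$-factors equal 1, $|E(\alpha,\beta)|=2|\sin(\bar\beta-\bar\alpha)|$, giving the factor $2^{2n}=4^n$ per step), iterate down to $Z_0=1$ while tracking the index shift on $\betab,\deltab$, and pass to~\eqref{equ:genus_0_gen} via the same product-to-sum trigonometric identity. The index bookkeeping and the count $\sum_{\ell=0}^{n-1}(n-\ell)=\tfrac{n(n+1)}{2}$ match the paper exactly.
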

\begin{rem}\label{rem:partition_function_genus0}\leavevmode
\begin{itemize}
\item Recall that the Riemann theta function is equal to 1 in the genus $0$ case, so that there is no discrete Abel map.
\item Formula~\eqref{equ:genus_0_gen} can be seen as a generalization of
  Stanley's formula~\cite{BYYangPhD,Propp_talk} when more parameters are allowed, see also the proof of Corollary~\ref{cor:Stanley} below. 
\item In the last case, if we suppose moreover that $\rho=\frac{1}{4}$, all edge weights are equal to $2\sin(\frac{\pi}{4})=\sqrt{2}$. Since the number of edges in a dimer configuration is equal to the number of white vertices, that is $n(n+1)$; dividing~\eqref{equ:Propp} by $(\sqrt{2})^{n(n+1)}$, one recovers the celebrated result of~\cite{EKLP}, which states that the number of domino tilings of the Aztec diamond is equal to $2^{\frac{n(n+1)}{2}}$. It is interesting to note that this same formula also holds in some cases of non-uniform weights. 
\end{itemize}
\end{rem}

\begin{proof}
Returning to the definition of the weights in the genus 0 case, see Equation~\eqref{equ:critical_weights}, we have that all terms involving the Riemann theta function $\theta$ are equal to 1 and that the modulus $|E(\alpha,\beta)|$ of the prime form is equal to $|2\sin(\betabar-\alphabar)|$. As a consequence, the recurrence formula of Theorem~\ref{thm:partition_function} gives 
\begin{align*}
Z_n&(\alphab,\betab,\gammab,\deltab)=\\
&=2^{2n}\cdot\Bigl[\prod_{j=1}^{n}\ |\sin(\betabar_j-\alphabar_j)\sin(\deltabar_j-\gammabar_j)|\Bigr]\cdot Z_{n-1}((\alpha_j)_{j=1}^{n-1},(\beta_j)_{j=2}^n,(\gamma_j)_{j=1}^{n-1},(\delta_j)_{j=2}^n).
\end{align*}
Iterating this induction and using that the initial condition is $Z_0=1$, we obtain Formula~\eqref{equ:genus_0_gen_0}. Formula~\eqref{equ:genus_0_gen} is obtained using classical trigonometric identities: for all $\alpha,\beta,\gamma,\delta$,
{\small \begin{align*}
\sin\,&(\betabar-\alphabar)\sin(\deltabar-\gammabar)=\\
&=\frac{1}{2}[\cos(\betabar-\alphabar-\deltabar+\gammabar)-\cos(\betabar-\alphabar+\deltabar-\gammabar)]\\
&=\frac{1}{2}[\cos(\betabar-\alphabar-\deltabar+\gammabar)-\cos(\betabar-\gammabar+\alphabar-\deltabar)+\cos(\betabar-\gammabar+\alphabar-\deltabar)
-\cos(\betabar-\alphabar+\deltabar-\gammabar)]\\
&=\sin(\deltabar-\betabar)\sin(\gammabar-\alphabar)+\sin(\betabar-\gammabar)\sin(\deltabar-\alphabar),
\end{align*}}
where in the third line, we have subtracted and added $\cos(\betabar-\gammabar+\alphabar-\deltabar)$. 
\qedhere
\end{proof}

As a consequence of Corollary~\ref{cor:genus_0}, we also recover Stanley's
celebrated formula~\cite{Propp_talk,BYYangPhD}, see also~\cite[{Section~6}]{ryg}.
Suppose that edges are assigned weights $\xb=(x_j)_{j=1}^n,\yb=(y_j)_{j=1}^n, \wb=(w_j)_{j=1}^n,\zb=(z_j)_{j=1}^n$ as in Figure~\ref{fig:fig_Stanley}. Let us denote by $Z(\xb,\yb,\zb,\wb)$ the corresponding partition function, keeping in mind that the notation $\xb,\yb,\zb,\wb$ is used for Stanley's weights and that the notation $\alphab,\betab,\gammab,\deltab$ is used for Fock's weights.

\begin{cor}[Stanley's formula]\label{cor:Stanley}
The partition function of the Aztec diamond with weights $\xb,\yb,\zb,\wb$ as in Figure~\ref{fig:fig_Stanley} is equal to
\[
Z(\xb,\yb,\zb,\wb)=\prod_{\ell=0}^{n-1} \prod_{j=1}^{n-\ell}\bigl(x_j w_{j+\ell}+y_j z_{j+\ell}\bigr).
\]
\end{cor}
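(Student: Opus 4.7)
The strategy is to combine the gauge equivalence of Proposition~\ref{prop:gauge_Stanley} with the explicit formula of Corollary~\ref{cor:genus_0}, and reduce Stanley's formula to an algebraic identity. Fix $\gamma,\delta\in A_0$ and choose $\alpha_1\in A_0$ such that $\sin(\bar\gamma-\bar\alpha_1)/\sin(\bar\delta-\bar\alpha_1)=x_1/y_1$; this is possible since this ratio varies continuously over $(0,\infty)$ as $\bar\alpha_1$ sweeps the relevant cyclic interval. By Proposition~\ref{prop:gauge_Stanley}, there exist angles $(\alpha_j)_{j=2}^n,(\beta_j)_{j=1}^n$ with $\alphab<\gamma<\betab<\delta$ such that Fock's weight function $\nu$ in genus 0, with $\gammab\equiv\gamma,\deltab\equiv\delta$, is gauge equivalent to Stanley's weight function $\tilde\nu$. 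By Equation~\eqref{equ:partition_gauge}, for any reference dimer configuration $\Ms_0$,
\[
Z(\tilde\nu)=\frac{\tilde\nu(\Ms_0)}{\nu(\Ms_0)}\,Z(\nu).
\]

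A short induction on $j$, alternating between the odd and even face-weight identities~\eqref{equ:gauge_Stanley}, shows that this clever choice of $\alpha_1$ forces, for every $j$,
\[
\frac{x_j}{y_j}=\frac{\sin(\bar\gamma-\bar\alpha_j)}{\sin(\bar\delta-\bar\alpha_j)},\qquad
\frac{w_j}{z_j}=\frac{\sin(\bar\delta-\bar\beta_j)}{\sin(\bar\beta_j-\bar\gamma)}.
\]
These relations are precisely the condition for the gauge function $\psi$ realising $\tilde\nu_{\ws\bs}=\psi(\ws)\nu_{\ws\bs}\psi(\bs)$ to be \emph{row-constant}: writing $p_j$ for $\psi$ on the $j$-th row of whites and $q_k$ for $\psi$ on the $k$-th row of blacks, we obtain
\begin{align*}
x_j &= 2 p_j q_{j-1}\sin(\bar\gamma-\bar\alpha_j), & y_j &= 2 p_j q_{j-1}\sin(\bar\delta-\bar\alpha_j),\\
z_j &= 2 p_j q_j\sin(\bar\beta_j-\bar\gamma), & w_j &= 2 p_j q_j\sin(\bar\delta-\bar\beta_j).
\end{align*}

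Now apply Equation~\eqref{equ:genus_0_gen} of Corollary~\ref{cor:genus_0} with lifts chosen so that every sine inside each summand is positive, allowing the absolute-value bars to be dropped. Substituting the row-constant identifications, each bracket $S_{j,\ell}=\sin(\bar\delta-\bar\beta_{j+\ell})\sin(\bar\gamma-\bar\alpha_j)+\sin(\bar\beta_{j+\ell}-\bar\gamma)\sin(\bar\delta-\bar\alpha_j)$ simplifies to $4S_{j,\ell}=(x_j w_{j+\ell}+y_j z_{j+\ell})/(p_j q_{j-1}p_{j+\ell}q_{j+\ell})$; using $2^{n(n+1)}=4^{n(n+1)/2}$ to absorb the $4$'s gives
\[
Z(\nu)=\prod_{\ell=0}^{n-1}\prod_{j=1}^{n-\ell}\frac{x_j w_{j+\ell}+y_j z_{j+\ell}}{p_j q_{j-1}p_{j+\ell}q_{j+\ell}}.
\]
Finally, $\tilde\nu(\Ms_0)/\nu(\Ms_0)=\prod_\ws\psi(\ws)\prod_\bs\psi(\bs)=\prod_k p_k^{n+1}\prod_k q_k^n$ by the row-vertex counts in $\Az_n$; a direct exponent count in the denominator $\prod_{\ell,j}p_j q_{j-1}p_{j+\ell}q_{j+\ell}$ shows that $p_k$ appears $n+1$ times and $q_k$ appears $n$ times, so the gauge factor cancels the denominator exactly, yielding Stanley's formula.

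The main obstacle is the inductive verification of row-constancy of $\psi$ — that is, checking that the initial tuning of $\alpha_1$ propagates through the face-weight equations to force the $x/y$ and $w/z$ ratios at every subsequent row — and the exponent-matching combinatorial check at the end; both are short but must be executed carefully.
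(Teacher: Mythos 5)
Your proof is correct, and its skeleton is the same as the paper's: invoke Proposition~\ref{prop:gauge_Stanley} to pass to gauge-equivalent Fock weights, evaluate the Fock partition function via Equation~\eqref{equ:genus_0_gen} of Corollary~\ref{cor:genus_0}, and transport back through Equation~\eqref{equ:partition_gauge}. Where you diverge is in the bookkeeping of the gauge factor. The paper never constructs the gauge function: it factors each bracket as $|\sin(\betabar_{j+\ell}-\gammabar)\sin(\deltabar-\alphabar_j)|\bigl(\tfrac{x_j w_{j+\ell}}{y_j z_{j+\ell}}+1\bigr)$ using the (telescoped) face-weight identities~\eqref{equ:gauge_Stanley}, and then cancels the leftover product against $\tilde{\nu}(\Ms_0)/\nu(\Ms_0)$ evaluated on the explicit reference matching $\Ms_0$ of Figure~\ref{fig:fig_Stanley}, whose Fock weight is exactly $2^{n(n+1)}\prod_{\ell,j}|\sin(\betabar_{j+\ell}-\gammabar)\sin(\deltabar-\alphabar_j)|$. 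You instead spend an extra normalization (tuning $\alpha_1$ so that $x_1/y_1$ matches the corresponding sine ratio, which correctly propagates by induction through~\eqref{equ:gauge_Stanley} to all rows), build an explicit row-constant gauge $\psi$, and cancel it by an exponent count; both the propagation argument and the counts ($p_k$ appearing $n+1$ times, $q_k$ appearing $n$ times on each side) check out. The paper's route is shorter because choosing a concrete $\Ms_0$ sidesteps the construction of $\psi$ entirely; yours is more explicit about \emph{why} the cancellation happens and makes the row-constant structure of the gauge visible, at the cost of the extra tuning step and the combinatorial verification.
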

\begin{proof}
For the purpose of this proof, let us denote by $\nu$ Fock's weight function and by $\tilde{\nu}$ Stanley's one. Recall from Proposition~\ref{prop:gauge_Stanley} that, for every $\alpha_1<\gamma<\delta\in A_0=S^1$, there exists $(\alpha_j)_{j=2}^n$, $(\beta_j)_{j=1}^n$ such that $\alphab<\gamma<\betab<\gamma$ and such that the weight functions $\nu$ and $\tilde{\nu}$ are gauge equivalent. Let us take such angles and compute $Z(\alphab,\betab,\gammab,\deltab)$ using Corollary~\ref{cor:genus_0}, Equation~\eqref{equ:genus_0_gen}. We obtain 
{\small 
\begin{align*}
Z(&\alphab,\betab,\gammab,\deltab)=
2^{n(n+1)}\prod_{\ell=0}^{n-1} \prod_{j=1}^{n-\ell}
|\sin(\deltabar-\betabar_{j+\ell})\sin(\gammabar-\alphabar_j)+\sin(\betabar_{j+\ell}-\gammabar)\sin(\deltabar-\alphabar_j)|\\
&=2^{n(n+1)}\prod_{\ell=0}^{n-1} \prod_{j=1}^{n-\ell}|\sin(\betabar_{j+\ell}-\gammabar)\sin(\deltabar-\alphabar_j)|\prod_{\ell=0}^{n-1} \prod_{j=1}^{n-\ell}\Bigl(\frac{|\sin(\deltabar-\betabar_{j+\ell})\sin(\gammabar-\alphabar_j)|}{|\sin(\betabar_{j+\ell}-\gammabar)\sin(\deltabar-\alphabar_j)|}+1\Bigr)\\
&=2^{n(n+1)}\prod_{\ell=0}^{n-1} \prod_{j=1}^{n-\ell}[|\sin(\betabar_{j+\ell}-\gammabar)\sin(\deltabar-\alphabar_j)|]\prod_{\ell=0}^{n-1} \prod_{j=1}^{n-\ell}
\Bigl(\frac{x_j w_{j+\ell}}{y_j z_{j+\ell}}+1\Bigr)\\
&=2^{n(n+1)}\prod_{\ell=0}^{n-1} \prod_{j=1}^{n-\ell}\frac{|\sin(\betabar_{j+\ell}-\gammabar)\sin(\deltabar-\alphabar_j)|}{y_jz_{j+\ell}}
\prod_{\ell=0}^{n-1} \prod_{j=1}^{n-\ell}
\bigl(x_j w_{j+\ell}+y_j z_{j+\ell}\bigr),
\end{align*}}
where in the third equality we used gauge equivalence of the weight functions $\nu$ and $\tilde{\nu}$, see Equation~\eqref{equ:gauge_Stanley}. Using gauge equivalence again, we know from Equation~\eqref{equ:partition_gauge} that for any reference dimer configuration $\Ms_0$,
\begin{align*}
Z(\xb,\yb,\wb,\zb)&=\frac{\tilde{\nu}(\Ms_0)}{\nu(\Ms_0)}\,Z(\alphab,\betab,\gammab,\deltab).
\end{align*}
The proof is concluded by taking as reference dimer configuration $\Ms_0$ the one of Figure~\ref{fig:fig_Stanley}, and observing that 
\begin{equation*}
\nu(\Ms_0)= 2^{n(n+1)}\prod_{\ell=0}^{n-1} \prod_{j=1}^{n-\ell}|\sin(\betabar_{j+\ell}-\gammabar)\sin(\deltabar-\alphabar_j)|,\quad 
\tilde{\nu}(\Ms_0)=\prod_{\ell=0}^{n-1} \prod_{j=1}^{n-\ell} y_j z_{j+\ell}. \qedhere
\end{equation*}
\end{proof}

\subsection{Proof of Theorem~\ref{thm:partition_function}}\label{sec:proof_partition}

\paragraph{Local moves.}

Two local moves play a crucial rule in the study of the dimer model on minimal graphs: the spider move~\cite{Kuperberg,Propp} and contraction/expansion of a degree two vertex; see~\cite{Thurston,GK} for more on the subject. In the setting of the Aztec diamond, these moves allow to reduce a size $n$ Aztec diamond into a size $n-1$ Aztec diamond, see for example~\cite{Propp_talk}. Our argument consists in using this induction in the case of the dimer model with Fock's weights while controlling the effect on the partition function. The fact that this works in this very general framework relies on Fay's trisecant identity~\cite{Fay,Fock,BCdT:genusg}.

The setting for the next lemma is the dimer model with Fock's weights, see Equation~\eqref{eq:Focks_weight}, on any finite minimal graph. Let us denote by $Z$ the dimer partition function before the move is performed, and by $Z'$, resp. $Z''$, the partition function after having performed a spider move, resp. a contraction of a degree two vertex, see Figure~\ref{fig:elem_moves}. This figure also illustrates the evolution of the oriented train-tracks, of their angle parameters, and introduces the notation of Lemma~\ref{lem:elem_moves}. 

\begin{figure}[h!]
  \centering
  \begin{overpic}[width=\linewidth]{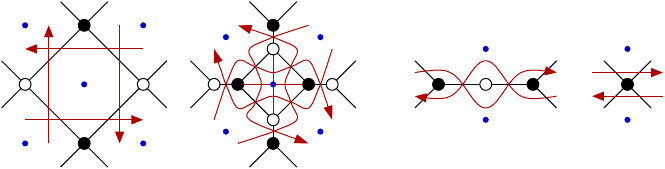}
  \put(2,7){\scriptsize $\alpha$} 
  \put(2,17){\scriptsize $\beta$} 
  \put(6,1){\scriptsize $\gamma$} 
  \put(18,1){\scriptsize $\delta$} 
  \put(1,2){\scriptsize $\fs_1$} 
  \put(23,2){\scriptsize $\fs_2$} 
  \put(23,22){\scriptsize $\fs_3$}
  \put(1,22){\scriptsize $\fs_4$}
  \put(10,12){\scriptsize $\fs$}
  \put(59,10){\scriptsize $\alpha$} 
  \put(59,14){\scriptsize $\beta$}
  \put(72,5){\scriptsize $\fs$} 
  \put(72,19){\scriptsize $\fs'$} 
  \end{overpic}
\caption{Spider move and contraction of a degree two vertex. }\label{fig:elem_moves}  
\end{figure}

\begin{lem}\label{lem:elem_moves}
The following equations describe the evolution of the partition function under 
\begin{enumerate}
\item a spider move
\begin{equation*}
Z=\frac{\prod_{j=1}^4 \theta(t+\mapd(\fs_j))}{|E(\alpha,\beta)E(\gamma,\delta)|}\frac{\theta(t+\mapd(\fs)+\alpha+\beta-\gamma-\delta)}{\theta(t+\mapd(\fs))} Z',
\end{equation*}
where $\fs$ is the vertex at the center of the square, and $\fs_1,\dots,\fs_4$ are the vertices corresponding to faces bounding the square, see Figure~\ref{fig:elem_moves} (left). 
\item a contraction of a degree two vertex
\begin{equation*}
Z=\frac{|E(\alpha,\beta)|}{\theta(t+\mapd(\fs))\theta(t+\mapd(\fs'))}Z'',
\end{equation*}
where $\fs,\fs'$ are the two vertices corresponding to the faces above and below the degree two vertex, see Figure~\ref{fig:elem_moves} (right).
\end{enumerate}
\end{lem}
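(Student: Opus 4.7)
The plan is to use locality: both moves affect only a bounded region of the graph, so the corresponding partition functions differ only through a local factor coming from the edges of the affected region, while the sum over configurations outside is the same. The lemma is thus a purely local identity between sums of products of Fock weights, and this identity is in both cases a known consequence of Fay's trisecant identity (as already exploited in \cite{Fay,Fock,BCdT:genusg}).

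For the spider move (1), I would enumerate the local dimer configurations compatible with the global matching near the central square of Figure~\ref{fig:elem_moves} (left), and compute their total contribution to $Z$ using the explicit form of Fock's weights~\eqref{eq:Focks_weight}. The resulting local factor is a combination of two products of four theta functions, with common denominator $\theta(t+\mapd(\fs))^2\prod_{j=1}^4\theta(t+\mapd(\fs_j))$ and numerator built from the angles $\alpha,\beta,\gamma,\delta$. This combination is exactly the left-hand side of Fay's trisecant identity, which rewrites it as a single product containing the shifted theta factor $\theta(t+\mapd(\fs)+\alpha+\beta-\gamma-\delta)$ together with the prime forms $|E(\alpha,\beta)E(\gamma,\delta)|$; the shift appears naturally because the discrete Abel map at the central face changes by $\alpha+\beta-\gamma-\delta$ under the move, an increment dictated by the train-track geometry (the two horizontal and two vertical train-tracks swap their relative positions). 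The analogous computation on the transformed side gives $Z'$, and the ratio is precisely the pre-factor claimed.

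For the contraction of a degree 2 vertex (2), the argument is more elementary. A standard bijection between dimer configurations before and after the move (see~\cite{GK,Thurston}) identifies each matching of the original graph with one of the contracted graph, by absorbing the choice of edge incident to the degree 2 vertex into the single new edge. With Fock's weights, the two edges at the degree 2 vertex share the same pair of adjacent faces $\fs,\fs'$ and carry train-track angles $\alpha,\beta$, so writing out their combined contribution and comparing it to the Fock weight of the replacement edge yields exactly the stated ratio $|E(\alpha,\beta)|/[\theta(t+\mapd(\fs))\theta(t+\mapd(\fs'))]$. The verification amounts to a substitution in~\eqref{eq:Focks_weight} and the antisymmetry of the prime form.

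The main obstacle lies in (1): applying Fay's trisecant identity in the exact form needed requires identifying the angles $\alpha,\beta,\gamma,\delta$ with the correct arguments on either side of the identity, fixing consistent lifts in the universal cover $\widetilde{\Sigma}$ so that the shift $\alpha+\beta-\gamma-\delta$ in the theta argument is unambiguous, and verifying that the change of $\mapd(\fs)$ under the move indeed equals this shift. These bookkeeping issues have all been treated in~\cite{BCdT:genusg}, where the gauge invariance of Fock's weights under spider and contraction moves is established in the infinite setting; what the present lemma adds is the precise multiplicative factor that this invariance entails, which is read off directly from the right-hand side of Fay's identity.
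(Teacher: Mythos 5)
Your proposal is correct and rests on the same two pillars as the paper's proof: locality (only a bounded region changes, so $Z/Z'$ is a purely local ratio) and the invariance of Fock's weights under the two moves established in \cite{Fock,BCdT:genusg}, which guarantees that the ratio of partial partition functions $Z_j/Z_j'$ is one and the same constant $C$ for every boundary condition around the modified region. Where you diverge is in how you extract $C$ for the spider move: you propose to compute it from the generic boundary condition, where the inner square admits two local completions, so that the sum of two products of theta functions must be recombined via Fay's trisecant identity into the single product carrying $\theta(t+\mapd(\fs)+\alpha+\beta-\gamma-\delta)$ and $|E(\alpha,\beta)E(\gamma,\delta)|$. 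The paper instead deliberately sidesteps Fay at this stage: since $C$ is the same for all six boundary configurations, it reads $C$ off from the one configuration in which both sides have a \emph{unique} local completion, so that $C$ is a bare ratio of individual edge weights, $C=|\Ks^{\fs,\fs_3}_{\beta,\delta}|/|\Ks^{\fs_3,\fs_4}_{\gamma,\delta}\Ks^{\fs_3,\fs_2}_{\beta,\alpha}\Ks^{\fs_1,\fs'}_{\beta,\delta}|$, and the shift $\mapd'(\fs)=\mapd(\fs)+\alpha+\beta-\gamma-\delta$ enters only through the definition of the discrete Abel map after the move (the paper explicitly remarks that recovering $C$ from the other configurations would require Fay's identity). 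Your route works but front-loads exactly the bookkeeping you flag as the main obstacle (lifts, argument matching in Fay's identity); the paper's choice of boundary condition makes the computation a one-line substitution. For the contraction of a degree two vertex your argument coincides with the paper's: both edges at the degree two vertex carry the same Fock weight $\Ks^{\fs,\fs'}_{\alpha,\beta}$, so the constant is immediate.
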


\begin{proof}
For the purpose of this proof it is convenient to write Fock's weight $\Ks_{\ws,\bs}$ of Equation~\eqref{eq:Focks_weight} as 
\[
\Ks_{\ws,\bs}=\Ks^{\fs,\fs'}_{\alpha,\beta}. 
\]

\begin{figure}[h!]
  \centering
  \begin{overpic}[width=\linewidth]{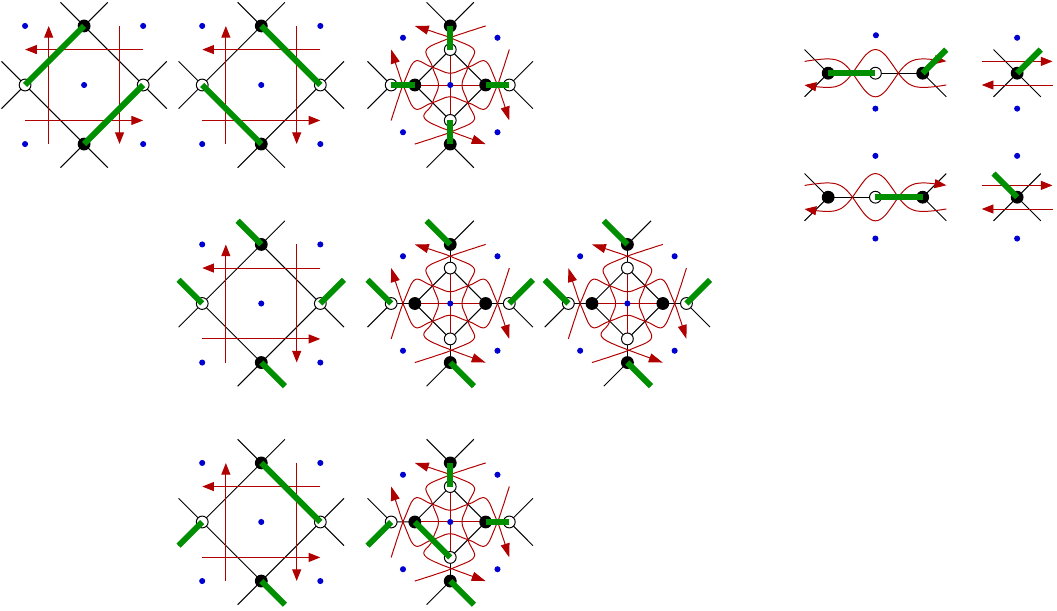}
  \put(15,40){\scriptsize $Z_1$} 
  \put(42,40){\scriptsize $Z_1'$} 
  \put(24,18){\scriptsize $Z_2$}
  \put(50,18){\scriptsize $Z_2'$}
  \put(24,-2){\scriptsize $Z_3$}
  \put(42,-2){\scriptsize $Z_3'$}
  \put(54,8){\scriptsize \text{+ symmetries $Z_4,Z_4',\dots,Z_6,Z_6'$}}
  \put(83,45){\scriptsize $Z_1$}
  \put(93,45){\scriptsize $Z_1''$}
  \put(83,33){\scriptsize $Z_2$}
  \put(93,33){\scriptsize $Z_2''$}
  \end{overpic}
\caption{Partial partition function $Z_1,\dots,Z_6$, resp. $Z_1',\dots,Z_6'$ before, resp. after, the spider moves, and $Z_1,Z_2$, resp. $Z_1'',Z_2''$, before, resp. after, the contraction of a degree two vertex.}\label{fig:fig_move_proof}  
\end{figure}

By~\cite{Fock,BCdT:genusg} we know that the dimer model with Fock's weights, when considered as a model with face weights, is invariant under these two moves. In particular, for the spider move, this implies that when considered as a model with edge weights, there exists a constant $C$ such that for all six partial partition function of Figure~\ref{fig:fig_move_proof}, obtained by fixing the dimer configuration on edges bounding the square, we have $Z_j=C Z_j'$, $j\in\{1,\dots,6\}$. 
In order to compute this constant, we can thus choose any of these six cases, and we choose the fourth one since it leads to simple computations (recovering $C$ from the others requires using Fay's trisecant identity). Using Figure~\ref{fig:elem_moves} to identify the angle parameters, we obtain
\begin{align*}
C&=\frac{Z}{Z'}=\frac{|\Ks^{\fs,\fs_3}_{\beta,\delta}|}{|\Ks_{\gamma,\delta}^{\fs_3,\fs_4}\Ks_{\beta,\alpha}^{\fs_3,\fs_2}\Ks_{\beta,\delta}^{\fs_1,\fs'}|}=\frac{\prod_{j=1}^4 \theta(t+\mapd(\fs_j))}{|E(\alpha,\beta)E(\gamma,\delta)|}\frac{\theta(t+\mapd'(\fs))}{\theta(t+\mapd(\fs))},
\end{align*}
where $\mapd'(\fs)$ is the value of the discrete Abel map after the move is performed. Returning to its definition, see Section~\ref{sec:Fock}, we have
\[
\mapd'(\fs)=\mapd(\fs)+\alpha+\beta-\gamma-\delta.
\]
Note that the value of the discrete Abel map at the other dual vertices remains unchanged. 

For the contraction of a degree two vertex, in a similar way, we can choose any of the two cases, and we obtain
\begin{align*}
C&=\frac{Z}{Z''}=|\Ks_{\alpha,\beta}^{\fs,\fs'}|=\frac{|E(\alpha,\beta)|}{\theta(t+\mapd(\fs))\theta(t+\mapd(\fs'))}.\qedhere
\end{align*}
\end{proof}

\paragraph{Proof of Theorem~\ref{thm:partition_function}.}

The proof consists in performing a sequence of spider moves and contraction of degree two vertices to transform an Aztec diamond of size $n$ into an Aztec diamond of size $n-1$~\cite{Propp_talk}, while keeping track of the evolution of the partition function using Lemma~\ref{lem:elem_moves}, see Figure~\ref{fig:induction}. 

\begin{figure}[h!]
  \centering
  \begin{overpic}[width=\linewidth]{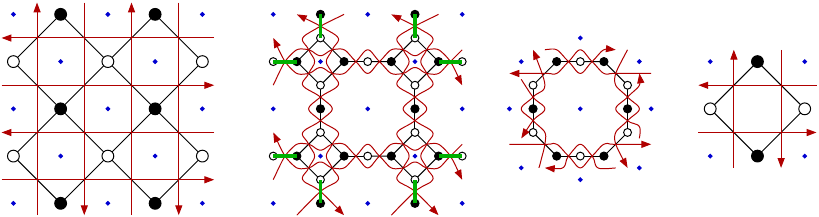}
  \put(4,-2){\scriptsize $\gamma_1$}
  \put(10,-2){\scriptsize $\delta_1$}
  \put(15,-2){\scriptsize $\gamma_2$}
  \put(21,-2){\scriptsize $\delta_2$}

  \put(-3.5,4.5){\scriptsize $\alpha_1$}
  \put(-3.5,10){\scriptsize $\beta_1$}
  \put(-3.5,16){\scriptsize $\alpha_2$}
  \put(-3.5,22){\scriptsize $\beta_2$}

  \put(35,-2){\scriptsize $\gamma_1$}
  \put(41,-2){\scriptsize $\delta_1$}
  \put(47,-2){\scriptsize $\gamma_2$}
  \put(53,-2){\scriptsize $\delta_2$}

  \put(30,4.5){\scriptsize $\alpha_1$}
  \put(30,10){\scriptsize $\beta_1$}
  \put(30,16){\scriptsize $\alpha_2$}
  \put(30,22){\scriptsize $\beta_2$}

  \put(63.5,6){\scriptsize $\gamma_1$}
  \put(59.5,8){\scriptsize $\alpha_1$}
  \put(59.5,17){\scriptsize $\beta_2$}
  \put(62,15.5){\scriptsize $\delta_1$}
  \put(66,21.5){\scriptsize $\alpha_2$}
  \put(73,4){\scriptsize $\beta_1$}
  \put(77,6.5){\scriptsize $\delta_2$}
  \put(78.5,9.5){\scriptsize $\gamma_2$}
  
  \put(83,10){\scriptsize $\alpha_1$}
  \put(83,16){\scriptsize $\beta_2$}
  \put(88,4){\scriptsize $\gamma_1$}
  \put(95,4){\scriptsize $\delta_2$}
  \end{overpic}
\caption{Induction step for computing the partition function of the Aztec diamond.}
\label{fig:induction}
\end{figure}  

Let us simply denote by $Z:=Z_n(\alphab,\betab,\gammab,\deltab;d)$ the partition function defined in~\eqref{equ:def_partition}. 

\paragraph{Step 1.} Suppose $n\geq 1$, then Step 1 consists in performing a spider move at each of the square whose dual vertex has odd coordinates. Let us denote by $Z^1$ the partition function of the dimer model on the resulting graph, see Figure~\ref{fig:induction} (first two figures). Using Point 1. of Lemma~\ref{lem:elem_moves}, we obtain
\begin{equation*}
\begin{split}
Z=Z^1\cdot \frac{[\prod_{\fs\in\{\corner_n\}}\theta(t+\mapd(\fs))] 
[\prod_{\fs\in\{\bry_n\setminus\corner_n\}}\theta(t+\mapd(\fs))^2]
[\prod_{\fs\in\{\inteven_n\}}\theta(t+\mapd(\fs))^4]}{\prod_{j=1}^n |E(\alpha_j,\beta_j)^n E(\gamma_j,\delta_j)^n|}\cdot \\
\cdot \prod_{\fs\in\{\odd_n\}}\frac{\theta(t+\mapd(\fs)+\alpha+\beta-\gamma-\delta)}{\theta(t+\mapd(\fs))},
\end{split}
\end{equation*}
where we generically denote by $\alpha,\beta,\gamma,\delta$ the angles around a dual vertex $\fs$ with odd coordinates, see Figure~\ref{fig:elem_moves} (left). 

\paragraph{Step 2.} Suppose $n\geq 1$. Looking at the graph obtained after Step 1, one notes that all the boundary edges have to belong to a dimer configuration, see Figure~\ref{fig:induction} (second graph, green edges). We can thus factor out their contribution and then remove all their incident edges, which yields the third graph of Figure~\ref{fig:induction}; let us denote by $Z^2$ its partition function. We thus have
\begin{align*}
Z^1&=Z^2 \cdot \frac{\prod_{j=1}^n |E(\alpha_j,\beta_j)^2 E(\gamma_j,\delta_j)^2|}{
\prod_{\fs\in\{\bry_n\}}\theta(t+\mapd(\fs))^2}.
\end{align*}
Note that if $n=1$, then this identity holds by setting $Z^2=1$. 

\paragraph{Step 3.} Suppose $n\geq 2$ (otherwise this step does not happen); one then performs the contraction move on all degree two (inner) vertices. Let us denote by $Z^3$ the partition function of the graph thus obtained, see Figure~\ref{fig:induction} (fourth graph). Using Point 2. of Lemma~\ref{lem:elem_moves}, we obtain
\begin{align*}
Z^2=Z^3 \cdot \frac{\prod_{j=1}^{n} |E(\alpha_j,\beta_j)^{n-1} E(\gamma_j,\delta_j)^{n-1}|}{[\prod_{\fs\in\{\bry_n\setminus\corner_n\}}\theta(t+\mapd(\fs))][\prod_{\fs\in\{\inteven_n\}}\theta(t+\mapd(\fs))^4]}.
\end{align*}

\paragraph{Conclusion.} When $n\geq 2$, combining all three steps gives
{\small 
\begin{align*}
Z=Z^3\cdot \Bigl[\prod_{j=1}^{n} |E(\alpha_j,\beta_j)E(\gamma_j,\delta_j)|\Bigr]\prod_{\fs\in\{\odd_n\}}
\frac{\theta(t+\mapd(\fs)+\alpha+\beta-\gamma-\delta)}{\theta(t+\mapd(\fs))}
\frac{1}{\prod_{\fs\in\{\bry_n\}}\theta(t+\mapd(\fs))}.
\end{align*}
}
The proof is concluded by observing that $Z^3$ is the partition function of an Aztec diamond of size $n-1$, with angles $(\alpha_j)_{j=1}^{n-1},(\beta_j)_{j=2}^{n-1},(\gamma_j)_{j=1}^{n-1},(\delta_j)_{j=2}^{n}$. Note moreover that the value of the discrete Abel map at the vertex $0=(0,0)$ in the Aztec diamond of size $n-1$, is the value of the discrete Abel map at the vertex $(1,1)$ in the original Aztec diamond of size $n$ after the spider move is performed, that is:
\[
(\gamma_1-\alpha_1)+(\alpha_1+\beta_1-\gamma_1-\delta_1)=\beta_1-\delta_1.
\]
When $n=1$, only the first two steps are performed, and we obtain
\begin{align*}
Z&=\frac{[\prod_{\fs\in\{\corner_n\}}\theta(t+\mapd(\fs))]}{|E(\alpha_1,\beta_1) E(\gamma_1,\delta_1)|}\cdot \frac{\theta(t+\mapd(\fs)+\alpha_1+\beta_1-\gamma_1-\delta_1)}{\theta(t+\mapd(\fs))}\frac{|E(\alpha_1,\beta_1)^2 E(\gamma_1,\delta_1)^2|}{[\prod_{\fs\in\{\bry_n\}}\theta(t+\mapd(\fs))^2]}\\
&=|E(\alpha_1,\beta_1) E(\gamma_1,\delta_1)|\frac{\theta(t+\mapd(\fs)+\alpha_1+\beta_1-\gamma_1-\delta_1)}{\theta(t+\mapd(\fs))} \frac{1}{\prod_{\fs\in\{\bry_n\}}\theta(t+\mapd(\fs))},
\end{align*}
where in the last line we used that the boundary vertices are exactly the corner
vertices in this case. We deduce that the
formula of Theorem~\ref{thm:partition_function} also holds in this case.\hfill$\square$

\section{Aztec diamond and minimal graphs}\label{sec:aztec_minimal}

The Aztec diamond of size $n$ can be seen as a finite subgraph of an infinite
minimal bipartite graph, obtained by gluing four ``quadrants'' made of hexagons
to the sides of the Aztec diamond, where minimal graphs are defined in Section~\ref{sec:Fock}. Let us denote this infinite graph by $\Gs_n$,
see Figure~\ref{fig:aztec_minimal} for a representation of $\Gs_3$. 
Similarly to the case of the Aztec diamond, the train-tracks of $\Gs_n$ are made
of four families, see Figure~\ref{fig:aztec_minimal}.
They are assigned angle parameters
  $\alphab=(\alpha_j)_{j=1}^{\infty}$, 
$\betab=(\beta_j)_{j=-\infty}^{n}$,
$\gammab=(\gamma_j)_{j=1}^{\infty}$,
$\deltab=(\delta_j)_{j=-\infty}^{n}$, in such a way that: the indices $1$
to $n$ correspond to the angle parameters of the train-tracks of $\Az_n$, and
the cyclic order~\eqref{equ:angle_condition} is preserved
for all the train-tracks (not only those crossing edges of $\Az_n$).

\begin{figure}[h!]
  \centering
  \def\svgwidth{6cm}
  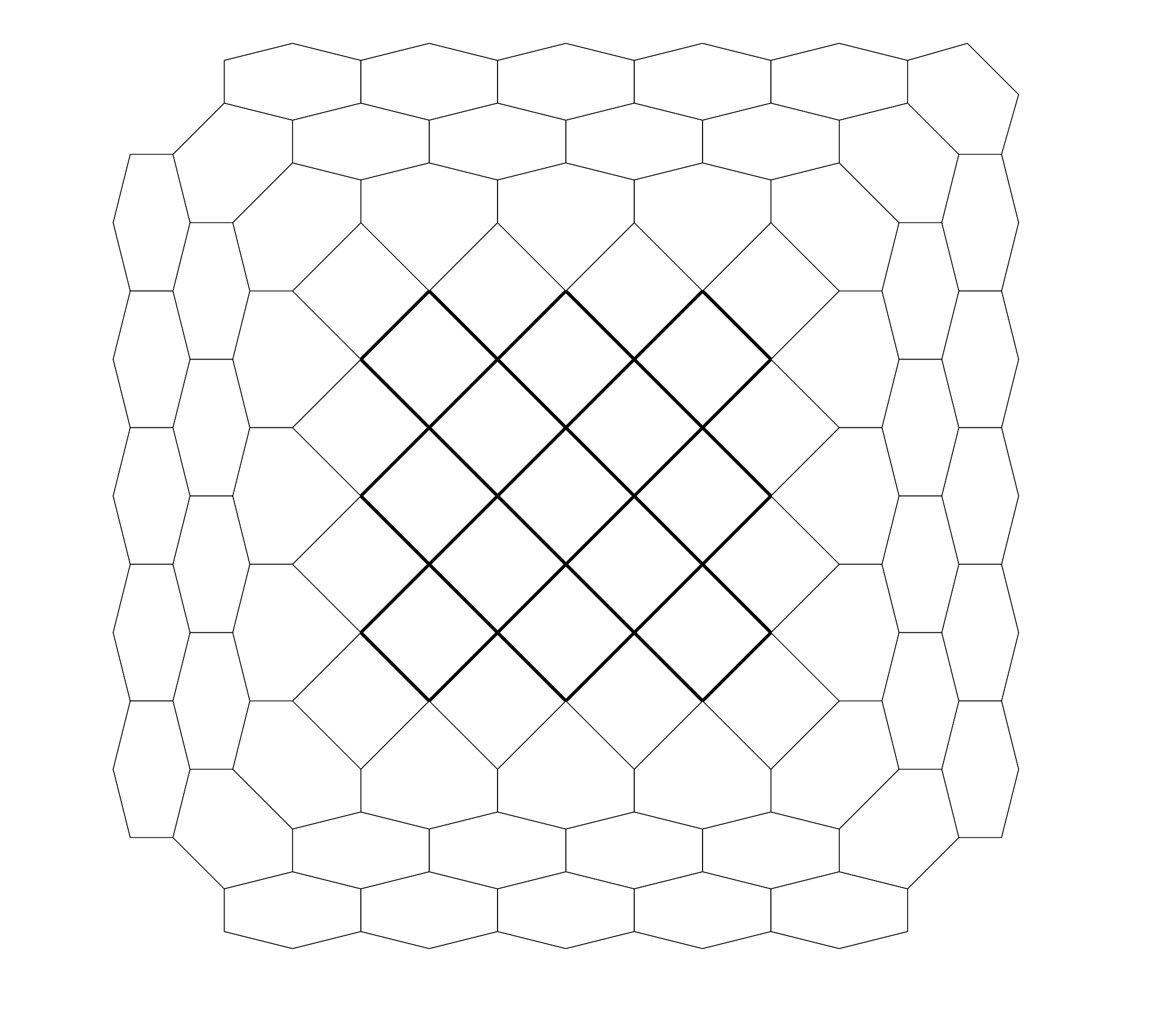
  \hfill
  \def\svgwidth{6cm}
\begingroup%
  \makeatletter%
  \providecommand\color[2][]{%
    \errmessage{(Inkscape) Color is used for the text in Inkscape, but the package 'color.sty' is not loaded}%
    \renewcommand\color[2][]{}%
  }%
  \providecommand\transparent[1]{%
    \errmessage{(Inkscape) Transparency is used (non-zero) for the text in Inkscape, but the package 'transparent.sty' is not loaded}%
    \renewcommand\transparent[1]{}%
  }%
  \providecommand\rotatebox[2]{#2}%
  \newcommand*\fsize{\dimexpr\f@size pt\relax}%
  \newcommand*\lineheight[1]{\fontsize{\fsize}{#1\fsize}\selectfont}%
  \ifx\svgwidth\undefined%
    \setlength{\unitlength}{920.2934405bp}%
    \ifx\svgscale\undefined%
      \relax%
    \else%
      \setlength{\unitlength}{\unitlength * \real{\svgscale}}%
    \fi%
  \else%
    \setlength{\unitlength}{\svgwidth}%
  \fi%
  \global\let\svgwidth\undefined%
  \global\let\svgscale\undefined%
  \makeatother%
  \begin{picture}(1,0.96681549)%
    \lineheight{1}%
    \setlength\tabcolsep{0pt}%
    \put(0,0){\includegraphics[width=\unitlength,page=1]{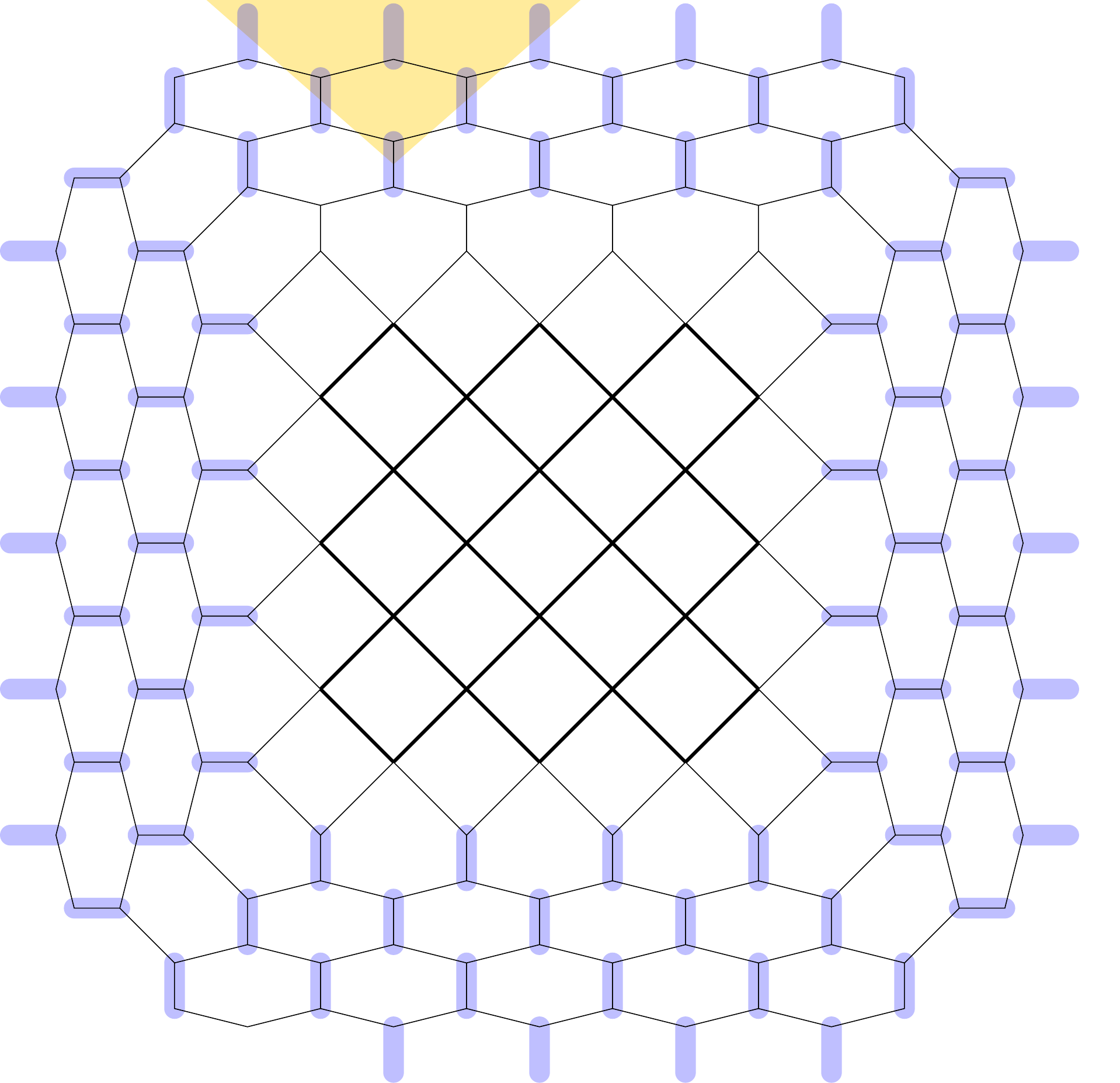}}%
    \put(0.03368917,0.88124496){\color[rgb]{0,0,0}\makebox(0,0)[lt]{\lineheight{1.25}\smash{\begin{tabular}[t]{l}$\ddots$\end{tabular}}}}%
    \put(0.84864661,0.07443711){\color[rgb]{0,0,0}\makebox(0,0)[lt]{\lineheight{1.25}\smash{\begin{tabular}[t]{l}$\ddots$\end{tabular}}}}%
    \put(0.85679617,0.87309539){\color[rgb]{0,0,0}\makebox(0,0)[lt]{\lineheight{1.25}\smash{\begin{tabular}[t]{l}$\iddots$\end{tabular}}}}%
    \put(0.04183875,0.08258662){\color[rgb]{0,0,0}\makebox(0,0)[lt]{\lineheight{1.25}\smash{\begin{tabular}[t]{l}$\iddots$\end{tabular}}}}%
    \put(0,0){\includegraphics[width=\unitlength,page=2]{fig_aztec_minimal.pdf}}%
    \put(0.34337302,0.76307613){\color[rgb]{0,0,0}\makebox(0,0)[lt]{\lineheight{1.25}\smash{\begin{tabular}[t]{l}$\ws$\end{tabular}}}}%
  \end{picture}%
\endgroup%

  \caption{\textbf{Left:} The four families of train-tracks of $\Gs_n$. The families
  $\alphab$ and $\gammab$ are indexed by positive integers. The family
$\betab$ and $\deltab$ are indexed by integers less or equal to $n$.
\textbf{Right:} The Aztec diamond of size $n$ as a finite part of an infinite minimal
  graph $\Gs_n$, for $n=3$. The added vertices form four quadrants with hexagonal
connectivity, in the north, south, west, east of the original Aztec region. The
(beginning of) the light-cone of a white vertex $\ws$ in the north quadrant is
represented in light yellow. The icy edges in those quadrants, made of the
horizontal and vertical edges, are highlighted in blue, and create a perfect
matching of the complement of the Aztec diamond inside $\Gs_n$.}
  \label{fig:aztec_minimal}
\end{figure}

The graph $\Gs_n$ belongs to a class of infinite graphs introduced
  in~\cite{Speyer} by Speyer, giving a combinatorial interpretation of the
  solution of the \emph{octahedron recurrence} as the partition function of
  dimer configurations on those graphs with specific condition at infinity.

In addition to the finite Kasteleyn matrix $\Ks=\Ks_{\Az_n}$ with rows and
columns indexed by vertices of the Aztec diamond of size $n$ and 
we consider also in this section $\tilde{\Ks}=\Ks_{\Gs_n}$, the infinite Fock Kasteleyn operator for
the infinite minimal graph $\Gs_n$. $\Ks$ is then the restriction of $\tilde{\Ks}$
to vertices of $\Az_n$.
The goal of this section is to use the framework of this paper to define an
  inverse of Fock's Kasteleyn operator $\tilde{\Ks}$ on the infinite graph $\Gs_n$ such that the
  corresponding probability measure
induced on the edges of $\Az_n$ is the Boltzmann measure computed by
    Theorem~\ref{thm:measure_Kenyon} from the finite matrices $\Ks$ and $\Ks^{-1}$.
    Moreover, this measure is frozen outside of the Aztec diamond: every edge of
    a quadrant is either present a.s.\ or absent a.s., see
    Proposition~\ref{prop:gibbs}.

Denote by $Q_N$, $Q_S$, $Q_W$, $Q_E$, the four quadrants made of hexagons
respectively above, below, to the left, to the right of the Aztec diamond.
We need some more terminology to describe the graph:
\begin{defi}
  \leavevmode
  \begin{itemize}
    \item We say that an edge of $Q_S$ or $Q_N$ (resp.\ of $Q_W$ or $Q_E$) is
      \emph{icy} if it is vertical (resp.\ horizontal). Every vertex of these
      quadrants is incident with exactly one icy edge. So it makes sense to talk
      about the icy edge associated to a white (or black) vertex in one of the
      four quadrants.
    \item The \emph{light cone} of a white vertex $\ws$ in $Q_N$ is the region
      of the plane above the two half lines starting from middle of the icy edge
      associated to $\ws$, and going north-west and north-east with a 45-degree
      angle. See Figure~\ref{fig:aztec_minimal}.
      
      We define in a similar way the light cone for white vertices in $Q_S$,
      and for black vertices in $Q_W$ and $Q_E$.
  \end{itemize}
\end{defi}

We can define an infinite matrix $\Kinv$ with rows (resp.\ columns) indexed by
black (resp.\ white) vertices of $\Gs_n$ by extending Formula~\eqref{eq:Kinv} used
to compute $\Ks^{-1}$ to pairs of
vertices which are not necessarily both in $\Az_n$.
It turns out that for about half of the pairs of vertices, the entries of $\Kinv$ are
trivially 0. Moreover, it becomes a formal inverse of the infinite Fock
Kasteleyn operator $\tilde{\Ks}$. This is made precised in the statement below,
and its proof which computes explicitly the entries when needed.

\begin{lem}\label{lem:inverse_Ktilde}
The infinite matrix $\Kinv$, has the following block structure, where rows
and columns are grouped by regions in the order $\Az_n$, $Q_N$, $Q_S$, $Q_W$, $Q_E$:

\begin{equation}
\Kinv  =
\begin{blockmatrix}
  \block[blue] (0,4){\Ks^{-1}}(1,1)
  \block[cyan] (1,4)0(1,1)
  \block[cyan] (2,4)0(1,1)
  \block[green](3,4)\star(2,1)

  \block[purple](0,2)\ast(1,2)
  \block[orange](2,3)0(1,1)

  \block[orange](1,2)0(1,1)

  \block[cyan](0,1)0(1,1)
  \block[orange](1,1)0(1,1)
  \block[orange](2,1)0(1,1)
  \block[orange](4,1)0(1,1)

  \block[cyan](0,0)0(1,1)
  \block[orange](1,0)0(1,1)
  \block[orange](2,0)0(1,1)
  \block[orange](3,0)0(1,1)

  \block[yellow](1,3)D_N(1,1)
  \block[yellow](2,2)D_S(1,1)
  \block[yellow](3,1)D_W(1,1)
  \block[yellow](4,0)D_E(1,1)

  \block[red](3,2)\diamond(2,2)
\end{blockmatrix}
\label{eq:blocksA}
\end{equation}

Moreover, the operator $\Kinv$ is a formal inverse of the infinite Fock
Kasteleyn operator $\tilde{\Ks}$ on $\Gs_n$: 
\begin{equation*}
  \Kinv \tilde{\Ks}=\operatorname{Id}_{\text{blacks}},
  \qquad
  \tilde{\Ks} \Kinv = \operatorname{Id}_{\text{whites}}.
\end{equation*}
\end{lem}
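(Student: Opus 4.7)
The plan is to define $\Kinv$ directly by Formula~\eqref{eq:Kinv}, now applied to arbitrary pairs $(\bs,\ws)$ of vertices of the infinite graph $\Gs_n$: the functions $g_{\bs,0}$ and $g_{0,\ws}$ are computed along paths in the extended diamond graph of $\Gs_n$ (still path-independent by the product structure of $g$), while the contours $\C_1,\C_2$ and the product $\prod_{j=1}^n$ remain those associated to the train-tracks of $\Az_n$. Both the block structure~\eqref{eq:blocksA} and the inverse identities then follow from the same two ingredients as in the proof of Theorem~\ref{thm:Kinv}: the kernel identity~\eqref{equ:kernel_g} and Fay's identity~\eqref{equ:Fay}, now applied in the larger geometry of $\Gs_n$.

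The block structure is to be read off by tracking, for each combination of regions containing $\bs$ and $\ws$, the additional prime-form factors picked up by $g_{\bs,0}(u)$ and $g_{0,\ws}(v)$ from the train-tracks of $\Gs_n$ crossed in the quadrants. For the zero blocks, the plan is to show that these extra factors place all remaining poles of the integrand outside the trivial contour $\C_1$ in $u$ and outside $\C_2$ in $v$, while the indicator $\II_{\{\bs\text{ right of }\ws\}}$ simultaneously rules out $f^2$, so that both terms in~\eqref{eq:Kinv} vanish. For the diagonal blocks $D_N,D_S,D_W,D_E$, I expect that only pairs $(\bs,\ws)$ which are endpoints of the same icy edge yield a nonzero entry: a contour deformation bringing $\C_1$ across $\C_2$ will produce the residue of $1/E(u,v)$ at $u=v$, and the remaining terms should cancel, leaving precisely $1/\tilde{\Ks}_{\ws,\bs}$, as required for a frozen edge. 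Analogous but less clean expressions will describe the $\ast$, $\star$ and $\diamond$ blocks, and what matters for the next step is only that they retain the $g$-factored structure in $\bs$ or $\ws$.

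To establish the identities $\tilde{\Ks}\,\Kinv=\operatorname{Id}_{\text{whites}}$ and, symmetrically, $\Kinv\,\tilde{\Ks}=\operatorname{Id}_{\text{blacks}}$, I would argue entry by entry. When $\ws$ is an interior white of $\Az_n$, all its neighbors in $\Gs_n$ lie in $\Az_n$ and the five-step argument of Theorem~\ref{thm:Kinv} applies verbatim, using the block structure just established for each row region. When $\ws$ lies in the hexagonal bulk of a quadrant, the analog of~\eqref{equ:kernel_g} for $\tilde{\Ks}$ (valid on any minimal graph by~\cite{BCdT:genusg}) makes the sum over the non-icy neighbors of $\ws$ vanish, while the icy-edge partner of $\ws$ produces the single $\delta_{\ws,\ws'}$ through the diagonal block $D_X$.

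The main obstacle will be the case where $\ws$ lies on the boundary of $\Az_n$: its neighbor set in $\Gs_n$ then mixes Aztec vertices with quadrant vertices, so the sum $\sum_{\bs\sim\ws}\tilde{\Ks}_{\ws,\bs}\Kinv_{\bs,\ws'}$ combines contributions from the $\Ks^{-1}$ block with contributions from the $\ast$, $\star$ or $\diamond$ blocks. This is precisely the place where Steps~4 and~5 in the proof of Theorem~\ref{thm:Kinv} (the column $\ws_x=2n$) required a separate treatment via a residue at $u=v$; the extended formula is designed so that the new hexagonal contribution coming from the quadrant neighbors exactly reproduces this residue term, restoring the clean identity $\delta_{\ws,\ws'}$ without boundary correction. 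Verifying this cancellation will require a careful deformation of the $u$-contour together with a re-application of Fay's identity in each boundary configuration, but the underlying mechanism is the one already used to prove Theorem~\ref{thm:Kinv}.
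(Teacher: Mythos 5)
Your starting point coincides with the paper's: $\Kinv$ is defined by extending Formula~\eqref{eq:Kinv} to arbitrary pairs, and the zero blocks are obtained by tracking where the poles of $g_{\bs,0}(u)$ and $g_{0,\ws}(v)$ sit relative to $\C_1$ and $\C_2$. (Even there your mechanism is only half right: for $\ws\in Q_N$ the contour $\C_2$ encloses \emph{all} poles rather than none, the indicator $\II_{\{\bs\text{ right of }\ws\}}$ can equal $1$, and the vanishing comes from the residue theorem on the compact surface combined with a cancellation between $f^1$ and $f^2$ — but that is repairable.)

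The genuine gap is your description of the diagonal blocks. You expect that in $D_N,\dots,D_E$ ``only pairs $(\bs,\ws)$ which are endpoints of the same icy edge yield a nonzero entry.'' This is false, and it contradicts the very identity you are trying to prove. Take $\ws$ in the bulk of $Q_N$ with icy partner $\bs_0$, and let $\ws_1\neq\ws$ be another white neighbour of $\bs_0$. If $\Kinv_{\cdot,\ws}$ were supported on $\bs_0$ alone, then $(\tilde{\Ks}\Kinv)_{\ws_1,\ws}=\tilde{\Ks}_{\ws_1,\bs_0}/\tilde{\Ks}_{\ws,\bs_0}\neq 0$, whereas it must equal $\delta_{\ws_1,\ws}=0$. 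So the column of $\ws$ necessarily carries further nonzero entries; what is actually true (and what the paper establishes, and what the subsequent determinant computation relies on) is only that $\Kinv_{\bs,\ws}=0$ for $\bs$ \emph{outside the light cone} of $\ws$, with value $1/\tilde{\Ks}_{\ws,\bs_0}$ on the icy edge and generically nonzero values elsewhere in the cone — a triangular, not diagonal, structure. Relatedly, you leave the $\ast$, $\star$, $\diamond$ and light-cone entries to ``analogous but less clean'' direct contour computations and hope that the boundary-of-$\Az_n$ rows work out; the paper's actual mechanism for all these blocks is different and is the missing idea here: once the zero blocks are known, every remaining entry is \emph{determined recursively} by the relations $\sum_{\bs\sim\ws'}\tilde{\Ks}_{\ws',\bs}\Kinv_{\bs,\ws}=\delta_{\ws',\ws}$, because at each step exactly one unknown appears in the linear equation (the light cone gives the order of propagation). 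This simultaneously produces the block structure and the inverse identity, with no separate residue analysis needed at the boundary of $\Az_n$. Without either that recursive argument or an actual evaluation of the extended integrals in the quadrants, your proposal does not close.
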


\begin{proof}
The proof follows from the construction and the computations below:
\begin{itemize}
  \item If $\bs$ is inside the Aztec diamond, we can extend the value of
    $\Ks^{-1}_{\bs,\ws}$ for $\ws$ in $Q_N$ and $Q_S$.
    \begin{itemize}
      \item If $\ws$ is in $Q_S$, the train-tracks of type $\alpha$ separating
        $\bs$ from $\ws$ contribute as
        zeros of $g_{\bs,\ws}$. The contour $\C_2$ does not contain any pole of
        $g_{\bs,\ws}$ and thus, the integral over $\C_2$ of $g_{\bs,\ws}$ in this
        case is zero. So we do not need to care about the meaning of ``$\ws$ is
        on the right of $\bs$'' in this case. The double integral is also zero
        by a similar argument: there is no train-track of type $\alpha$ passing
        below $\ws$ (in this quadrant, we see only train-tracks of type
        $\beta,\gamma,\delta$). Therefore, the contour $\C_2$ does not contain
        any of the poles in $v$ of the integrand (for any fixed $u$). Thus the
        whole integral is $0$.
      \item A similar result holds when $\ws$ belongs to $Q_N$, but for a
        slightly different reason. Let us evaluate first the double integral in
        Equation~\eqref{eq:Kinv}. The contour $\C_2$ for the $v$ variable in
        Equation~\eqref{eq:Kinv} contains all the poles of the integrand except
        the one at $v=u$. So for a fixed $u$, the integral over $v$ is equal to
        $-2i\pi$ times the residue at $v=u$, which is equal to $g_{\bs,\ws}(u)$.
        We need to integrate this residue over $u$ along $\C_1$.
        \begin{itemize}
          \item If $\bs$ is ``on the left of $\ws$'' (meaning here that no
            parameter from $\gammab$ appears as a pole of $g_{\bs,\ws}$), there
            is no pole inside $\C_1$ so the double
            integral is 0. Moreover, the indicator function in front of the single
            integral is also zero.
          \item If on the contrary, $\ws$ is ``on the right'' of
            $\bs$ (there are poles of type $\gammab$, but then none of type
            $\deltab$), So the only poles of $g_{\bs,\ws}$ are either some $\alpha_j$
            or some $\gamma_j$. The indicator function is equal to 1, so the final
            formula is $-\frac{1}{2i\pi}$ times an integral surrounding all the
            parameters from $\alphab$ and all the parameters from $\gammab$ of
            $g_{\bs,\ws}$ but since all the poles are encloses, the total is again
            equal to 0.
        \end{itemize}
        In this case again, $\Kinv_{\bs,\ws}=0$ for all $\ws\in Q_N$.
        \end{itemize}
      \item We can repeat the same arguments when $\ws$ is in the Aztec diamond,
        and $\bs$ is either in $Q_W$ or $Q_E$, by looking now first at the
        variable $u$ in the double integral.
        As a consequence we have the four cyan zero blocks in this infinite inverse.
      \item We now look at the formula when:
        \begin{itemize}
          \item $\ws$ is fixed in $Q_N$ (resp. $Q_S$) and $\bs$ is allowed to
            exit the Aztec region to be in $Q_W$, $Q_E$ and $Q_S$ (resp.\ and
            $Q_N$);
          \item $\bs$ is fixed in $Q_W$ (resp. $Q_E$) and $\ws$ is allowed to
            exit the Aztec region to be in $Q_N$, $Q_S$ and $Q_E$ (resp.\ and
            $Q_W$).
        \end{itemize}
        The previous arguments again show that in these situations, the value of
        $\Kinv_{\bs,\ws}$ is also 0. This is represented by the orange blocks.
      \item We then continue computing by taking $\bs$ in the Aztec region, and
        $\ws\in Q_W$: We can compute iteratively the value on every white vertex
        column by column, which leads at each step a linear equation with a
        single unknown. We do the same for $\ws\in Q_E$. This corresponds to the
        green block. We do  the same for $\ws$ in the Aztec region, and $\bs\in
        Q_W\cup Q_N$. This corresponds to the purple region.
      \item We then discuss the entries when $\bs$ and $\ws$ are in the same
        quadrant. Fix for example $\ws$ in $Q_N$. We know from computations
        above that the value of $\Kinv_{\bs,\ws}$ is 0 if $\bs$ is just outside of
        $Q_N$ (but connected to a white vertex $\ws'$ of $Q_N$). We can then
        solve iteratively for all the values of $\Kinv_{\bs,\ws}$ by solving
        $\sum_\bs \Ks_{\ws',\bs} A_{\bs,\ws}=\delta_{\ws',\ws}$, as at each step
        there will be always a white vertex $\ws'$ such that we know the value
        of $\Kinv_{\bs,\ws}$ for two black neighbors of $\ws'$, but the value of the
        third value is not computed yet, but is completely determined by the
        linear equation above. In particular, we obtain that $\Kinv_{\bs,\ws}=0$ if
        $\bs$ is not in the light cone of $\ws$, and is equal to
        $\frac{1}{\Ks_{\ws,\bs}}$ if $(\ws,\bs)$ is the icy edge attached to
        $\ws$.
        We proceed in the same way for $Q_S$ and, after exchanging the roles of
        the colors, for $Q_E$ and $Q_W$. This determines the entries of the four
        diagonal blocks $D_N$, $D_S$, $D_W$, $D_E$.
      \item Finally, we proceed as before to compute the entries when $\bs\in
        Q_N\cup Q_S$ and $\ws\in Q_W\cup Q_E$, by propagating known values on
        the boundary to the bulk by the linear equation. The actual entries of
        that block will not matter for our purposes.\qedhere
    \end{itemize}
  \end{proof}

It turns out that this inverse of the Kasteleyn operator has a probabilistic
meaning. More precisely, the determinant of minors of $\Kinv$ are related to local
statistics of the Boltzmann measure on the Aztec diamond, as follows:

\begin{lem}
  Let $e_1=(\ws_1,\bs_1),\ldots e_k=(\ws_k,\bs_k)$ be distinct edges of the
  Aztec diamond of size $n$, and
  $e_{k+1}=(\ws_{k+1},\bs_{k+1}),\ldots,e_l=(\ws_l,\bs_l)$ be distinct edges in
  the complement of $\Az_n$ in $\Gs_n$.
  Then
  \begin{equation}
    \left(
      \prod_{i=1}^l \tilde{\Ks}_{\ws_i,\bs_i}
    \right)
    \det_{1\leq i,j\leq l} \Kinv_{\bs_i,\ws_j}
    =
    \mathbb{P}_{\Az_n}[\text{$e_1,\ldots,e_k$ are dimers}]
    \times
    \prod_{i=k+1}^l \mathbb{I}_{\{\text{$e_i$ is icy}\}}
    \label{eq:infinite_meas_1}
  \end{equation}
  where $\mathbb{P}_{\Az_n}$ is the Boltzmann probability measure on perfect
  matchings of the Aztec diamond of size $n$, computed
  via Theorem~\ref{thm:measure_Kenyon}.
\end{lem}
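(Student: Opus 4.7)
The idea is to exploit the block structure of $\Kinv$ from Lemma~\ref{lem:inverse_Ktilde}: after a common reordering of rows and columns of $M=(\Kinv_{\bs_i,\ws_j})_{1\le i,j\le l}$, the matrix becomes block lower triangular, and its determinant factorises as a product over the five regions $\Az_n$, $Q_N$, $Q_S$, $Q_W$, $Q_E$, with each factor contributing the expected term of~\eqref{eq:infinite_meas_1}.

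First, I would partition $\{1,\dots,l\}$ into subsets $J_0,J_N,J_S,J_W,J_E$ according to the region containing the edge $e_i$, and apply the \emph{same} permutation to rows and columns of $M$, bringing the labels into the order $J_W,J_E,J_0,J_N,J_S$; the signature squared being $1$, $\det M$ is unchanged. A direct inspection of~\eqref{eq:blocksA} shows that, in this new ordering, every block strictly above the block diagonal vanishes (the above-diagonal pairs $(Q_W,Q_E)$, $(Q_W,\Az_n)$, $(Q_W,Q_N)$, $(Q_W,Q_S)$, $(Q_E,\Az_n)$, $(Q_E,Q_N)$, $(Q_E,Q_S)$, $(\Az_n,Q_N)$, $(\Az_n,Q_S)$, and $(Q_N,Q_S)$ are precisely those marked $0$ in the block description). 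Hence $M$ is block lower triangular with diagonal blocks $M_W$, $M_E$, $M_0$, $M_N$, $M_S$, and
\[
\det M=\det M_W\cdot \det M_E\cdot \det M_0\cdot \det M_N\cdot \det M_S.
\]

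The Aztec block $M_0$ equals $(\Ks^{-1}_{\bs_i,\ws_j})_{i,j\in J_0}$ by construction of $\Kinv$, so multiplying by $\prod_{i\in J_0}\tilde{\Ks}_{\ws_i,\bs_i}=\prod_{i\in J_0}\Ks_{\ws_i,\bs_i}$ and applying Theorem~\ref{thm:measure_Kenyon} gives exactly $\mathbb{P}_{\Az_n}[e_1,\dots,e_k\text{ are dimers}]$. For each quadrant block, say $M_N$, I would then order the edges in $J_N$ by decreasing height of the white endpoint $\ws_i$; the light-cone characterisation from Lemma~\ref{lem:inverse_Ktilde} forces $\Kinv_{\bs_i,\ws_j}=0$ as soon as $\ws_j$ lies strictly higher than $\ws_i$ (because $\bs_i$ is a neighbour of $\ws_i$ and therefore cannot reach the light cone of such a $\ws_j$), and makes $M_N$ upper triangular. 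Its diagonal entry $\Kinv_{\bs_i,\ws_i}$ vanishes unless $e_i$ is the unique icy edge at $\ws_i$, in which case it equals $1/\tilde{\Ks}_{\ws_i,\bs_i}$; after multiplying by $\prod_{i\in J_N}\tilde{\Ks}_{\ws_i,\bs_i}$, the contribution of this block is exactly $\prod_{i\in J_N}\mathbb{I}_{\{e_i\text{ is icy}\}}$. The analogous argument, with the appropriately oriented light cone, handles $Q_S$, $Q_W$, and $Q_E$, and assembling these five contributions recovers~\eqref{eq:infinite_meas_1}.

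The main difficulty is the triangularisation inside the quadrant blocks, where one must verify that the height ordering of white endpoints really sorts the admissible entries as described, and then treat the degenerate configurations where two of the $e_i$ share a common endpoint (in which case two rows or two columns of $M$ coincide, so $\det M=0$, matching the vanishing of the right-hand side since no perfect matching can contain two dimers meeting at a single vertex). A secondary technical point concerns edges straddling the boundary between $\Az_n$ and a quadrant: such an edge is never icy, and one needs to argue directly that its presence also forces $\det M=0$, for instance by augmenting the list with the genuine icy edge at the quadrant endpoint and exhibiting a linear dependence among the rows of the enlarged matrix.
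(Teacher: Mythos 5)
Your proposal is correct and follows essentially the same route as the paper: both exploit the zero blocks of $\Kinv$ from Lemma~\ref{lem:inverse_Ktilde} to factor the determinant into the Aztec block (giving $\mathbb{P}_{\Az_n}$ via Theorem~\ref{thm:measure_Kenyon}) times triangular quadrant blocks whose diagonal entries are $\mathbb{I}_{\{\text{icy}\}}/\tilde{\Ks}_{\ws_i,\bs_i}$. The only differences are presentational — you package the argument as a block-lower-triangular factorisation after a symmetric permutation where the paper performs successive Laplace expansions along the $Q_N,Q_S$ columns and $Q_W,Q_E$ rows, and you order each quadrant by height of the white endpoint where the paper orders by light-cone containment (the former being a concrete instance of the latter) — and you additionally flag the degenerate cases of shared endpoints and straddling edges, which the paper passes over in silence.
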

\begin{proof}
  We can assume that the edges $e_{k+1},\ldots,e_l$ are ordered in such a way
  that they are grouped in four blocks corresponding to the four quadrants:
  $Q_N$, $Q_S$, $Q_W$, $Q_E$ (for
  edges joining a vertex of a quadrant to a vertex of a neighboring quadrant,
  consider them as belonging to either quadrant).
  In each block, order the edges in such a way
  that an edge $e'$ in the light-cone of a vertex of another edge $e$ of the
  same quadrant should come after $e$ in the list.

  Due to the structure of the matrix $\Kinv$, and the order we put on the edges,
  the entries of the submatrix
  \begin{equation*}
    (\Kinv_{\bs_i,\ws_j})_{1\leq i,j \leq l}
  \end{equation*}
  in columns indexed by white vertices in the quadrants $Q_N$ and $Q_S$ are
  zero, except maybe in the lower triangular part of the diagonal blocks. The
  diagonal entries for these columns are $\mathbb{I}_{\{\text{$(\ws_i,\bs_i)$ is
  icy}\}}\frac{1}{\Ks_{\ws_i,\bs_i}}$. We can therefore expand the determinant
  \begin{equation*}
    \det_{1\leq i,j \leq l} \Kinv_{\bs_i,\ws_j}
  \end{equation*}
  first along the columns corresponding to edges in the quadrants $Q_N$ and
  $Q_S$. Then, for similar reasons, we can expand the along the rows
  corresponding to edges in the quadrants $Q_W$ and $Q_E$. Therefore,
  \begin{equation*}
    \det_{1\leq i,j \leq l} \Kinv_{\bs_i,\ws_j}=\det_{1\leq i,j\leq k}
    \Kinv_{\bs_i,\ws_j}\times\prod_{i=k+1}^l \frac{\mathbb{I}_{\{\text{$(\ws_i,\bs_i)$ is
    icy}\}}}{\tilde{\Ks}_{\ws_i,\bs_i}},
  \end{equation*}
  which, once multiplied on both sides by $\left(\prod_{i=1}^{l} \tilde{\Ks}_{\ws_i,\bs_i}\right)$
  is exactly Equation~\eqref{eq:infinite_meas_1}.
\end{proof}

The structure of the graph $\Gs_n$ (as of the other infinite graphs
from~\cite{Speyer}) is such that constraints at infinity propagate in all the
quadrants up to the boundary of the Aztec region: suppose we are given a dimer
configuration of $\Gs_n$, which is such that in an annular region (large
enough to contain the Aztec diamond part in its interior), all the dimers are
icy edges. Then necessarily, all the icy edges in the four quadrants (and no
other in these parts of the graph) are present in this dimer configuration.
Such a configuration is called \emph{ultimately frozen}. Using the determinantal
formula from Theorem~\ref{thm:measure_Kenyon} with $\tilde{\Ks}$ and $J$ turns out to
define a Gibbs measure on dimer configurations on $\Gs_n$, supported on
ultimately frozen configurations.

\begin{prop}\label{prop:gibbs}
  Equation~\eqref{eq:infinite_meas_1} from the previous lemma defines a
  determinantal point process on edges of $\Gs_n$, which gives a Gibbs measure on
  the set of dimers configurations of $\Gs_n$ for the specification given by
  Fock's weights on edges.

  This probability measure $\mathbb{P}$ has the property to be \emph{frozen} outside of the
  Aztec diamond: every icy edge appear with probability one, and the other edges
  of the four quadrants appear with probability zero, in a random configuration
  sampled according to that measure. In other words, $\mathbb{P}$ is the product
  measure of the Dirac mass supported on the set of icy edges in the complement
  of the Aztec diamond and the Boltzmann measure on dimer configurations of the
  Aztec diamond $\mathbb{P}_{\Az_n}$ given by Fock's weights.
\end{prop}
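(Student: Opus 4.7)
The plan is to use the explicit factorization given in Lemma~\ref{lem:inverse_Ktilde} to identify the measure $\mathbb{P}$ defined by Formula~\eqref{eq:infinite_meas_1} with the product of the finite Boltzmann measure $\mathbb{P}_{\Az_n}$ on dimer configurations of $\Az_n$ and the Dirac mass on the icy perfect matching of $\Gs_n\setminus\Az_n$. All three claims of the proposition (determinantal structure, freezing outside $\Az_n$, and Gibbs property for Fock's weights) follow from this identification together with standard facts about the two factor measures.

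The preceding lemma has already shown that, for any finite list of edges of $\Gs_n$, the right-hand side of~\eqref{eq:infinite_meas_1} factors as $\mathbb{P}_{\Az_n}[e_1,\dots,e_k \in M]\cdot \prod_{i>k}\mathbb{I}_{\{e_i\text{ is icy}\}}$, where $e_1,\dots,e_k$ are the edges belonging to $\Az_n$. These are precisely the cylinder probabilities of a product measure on pairs (matching of $\Az_n$, matching of $\Gs_n\setminus\Az_n$), so Kolmogorov consistency is immediate and a unique probability measure $\mathbb{P}$ on dimer configurations of $\Gs_n$ is thus defined. The determinantal structure on edges with kernel $L(e,e')=\tilde{\Ks}_{\ws',\bs'}\Kinv_{\bs,\ws'}$ is a rewriting of~\eqref{eq:infinite_meas_1}. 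The freezing outside $\Az_n$ follows from the single-edge formula: every non-icy quadrant edge $(\ws,\bs)$ has probability $\tilde{\Ks}_{\ws,\bs}\Kinv_{\bs,\ws}=0$ since the corresponding entry of $\Kinv$ vanishes (either $\bs$ is outside the light cone of $\ws$ within the same quadrant, or the entry lies in one of the zero blocks of~\eqref{eq:blocksA}), while every icy quadrant edge has probability $\tilde{\Ks}_{\ws,\bs}\cdot(1/\tilde{\Ks}_{\ws,\bs})=1$ by the computation of the blocks $D_N, D_S, D_W, D_E$ in Lemma~\ref{lem:inverse_Ktilde}.

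To verify that $\mathbb{P}$ is a Gibbs measure for the specification given by Fock's weights, fix a finite simply connected region $D\subset\Gs_n$ and condition on a typical configuration of $\Gs_n\setminus D$. If $D\subset\Az_n$, the conditional distribution on $D$ is that of the finite Boltzmann measure $\mathbb{P}_{\Az_n}$ with the induced boundary condition, which is by definition Gibbs for Fock's weights. If $D$ intersects a quadrant, the conditioned outside configuration is almost surely the icy matching, and dimer exclusion at the interface between $D$ and $\Gs_n\setminus D$ forces the matching inside $D\setminus\Az_n$ to coincide with the icy matching, reducing the specification check to $D\cap\Az_n$ and hence to the first case. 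The main subtle point I expect is confirming that the icy matching of any hexagonal quadrant piece cut by $D$ is the unique perfect matching compatible with the prescribed boundary data; this is however a finite combinatorial statement that can be established by induction on vertices of the quadrant, using that each vertex has a unique incident icy edge so the constraint propagates deterministically from the boundary inwards.
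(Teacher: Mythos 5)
Your proposal is correct and rests on the same identification as the paper's proof: the marginals in Equation~\eqref{eq:infinite_meas_1} are those of the product of $\mathbb{P}_{\Az_n}$ with the Dirac mass on the icy matching, so consistency, Kolmogorov extension, and the freezing statement all come out the same way in both arguments. Where you diverge is in the verification of the Gibbs property. The paper checks the Dobrushin--Lanford--Ruelle condition only for a cofinal family of regions, namely those large enough to contain the whole Aztec diamond; for such a region the only combinatorial input needed is the ``ultimately frozen'' propagation stated just before the proposition (icy on a large annulus forces icy throughout the quadrants), after which the conditional law on the region reduces to $\mathbb{P}_{\Az_n}$ and DLR for all smaller regions follows by consistency of the specification. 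You instead verify DLR for an arbitrary finite window $D$, which requires the stronger local statement that the icy matching restricted to $D\cap Q$ is the \emph{unique} matching of that piece compatible with icy boundary data, for every $D$ and every quadrant $Q$. That statement is true, but your justification for it --- ``each vertex has a unique incident icy edge so the constraint propagates deterministically from the boundary inwards'' --- is not an argument: uniqueness of the incident icy edge does not by itself forbid a vertex from being matched by a non-icy edge. The correct reason is either the light-cone propagation used in the proof of Lemma~\ref{lem:inverse_Ktilde} (in $Q_N$ the forcing propagates from north to south, so one must propagate from the far side of $D$ toward the Aztec boundary, not from an arbitrary side of $\partial D$), or, more structurally, the rigidity of extremal-slope matchings: the icy matching of a quadrant admits no finite alternating cycle, so no local rearrangement is compatible with fixed boundary data. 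Your approach buys a direct, self-contained DLR check at the cost of this extra rigidity lemma; the paper's choice of large regions sidesteps it by needing the forcing only once, in the annular form already established. Either route closes the argument, but the rigidity step you flag must actually be proved along one of the lines above.
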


\begin{proof}
  The collection of relations $\eqref{eq:infinite_meas_1}$ for all
  finite subsets of edges of $\Gs_n$ is a consistent set of finite-dimensional
  probability distributions, by the multilinearity of the determinant.

  By Kolmogorov's extension theorem, there is a unique probability measure on
  the subset of edges of $\Gs_n$ with those finite dimensional marginals, which
  is automatically a determinantal process, supported on dimer configurations of
  $\Gs_n$.

  The fact that the configuration is frozen outside of the Aztec diamond is a
  consequence of Equation~\eqref{eq:infinite_meas_1}. This implies that it
  automatically satisfies the
  Dobrushin-Lanford-Ruelle~\cite{Dobrushin,LanfordRuelle} condition for large
  enough annuli, containing the Aztec diamond in their interior. Therefore, it
  is automatically Gibbs.
\end{proof}

This inverse $\Kinv$ of the Kasteleyn matrix $\tilde{\Ks}$ is of different nature than the inverses
defined~\cite{BCdT:genusg}. Whereas the measures constructed from the latter
would correspond to ``almost linear'' height profile, the one constructed in the
proposition above has a different, extremal slope in each of the quadrants.

In both cases, the inverses are constructed as contour integrals involving the
family of functions $g_{\bs,\ws}(u)$ in the kernel of $\tilde{\Ks}$. This raises the
question of constructing other inverses with that mechanism for other boundary
conditions at infinity, for this family of graphs or generalizations, and
further to classify inverses of $\tilde{\Ks}$ with a
probabilistic meaning.

\section{Limit shapes}\label{sec:limit_shapes}

The formula for the inverse of the Kasteleyn matrix is very well suited for
asymptotics analysis.

In this section, we discuss results about limit shapes which can be obtained
directly from the analysis of this formula. Most of them are already present in
the literature. We mostly focus on a setting where parameters $\alphab$,
$\betab$, $\gammab$, $\deltab$ are chosen in a periodic way
(which is more general than having periodic weights on edges,
see~\cite[Section~4]{BCdT:genusg}).

From far away all typical random dimer configuration of a large Aztec
look almost the same. This statement can be made rigorous by looking at the
\emph{height function}~\cite{Thurston:height}, and saying that this random rescaled
height function converges in probability to a continuous deterministic function,
\emph{the limit shape}, which maximizes a certain functional.

This has been established for the uniform measure on dimer configurations of
simply connected subgraphs of the square lattice~\cite{CKP}, generalized to more
general periodic weights~\cite{KOS,Kuchumov2}.

In the original setup of the uniform measure for the Aztec diamond, Jokush Propp
and Shor~\cite{JPS98} proved that the behaviour of the limit shape varies
depending on the position in the domain:
\begin{itemize}
  \item Outside of the inscribed circle (the \emph{frozen regions}): the limit shape is linear, and the
    corresponding dominos display a brickwall pattern, which has different
    orientations in each corner.
  \item inside the inscribed circle (the \emph{liquid region}): the slope of the height function varies
    continuously, and all type of edges have a positive probability of
    appearance.
\end{itemize}
The inscribed circle separates \emph{frozen regions} from a \emph{temperate} (or
\emph{liquid}, using the terminology of~\cite{KOS}). It is referred to as
the~\emph{arctic circle}, and the main result of~\cite{JPS98} is known as the
\emph{arctic circle theorem}.

This has been extended to periodic weights for which the liquid region is an
ellipse~\cite{Johansson}, where the fundamental domain still contains a single
pair of white and black vertices. Then it has been obtained for the 2-periodic
case~\cite{ChhitaJohansson} and biased variant~\cite{BorodinDuits}, where
the underlying spectral curve has genus 1, and more recently by Berggren and
Borodin~\cite{BerggrenBorodin} for a generic arbitrary genus spectral curve, but with additional
assumptions which seem purely technical.

We claim that Formula~\eqref{eq:Kinv} allows us to recover and go beyond the
results cited above:
having the inverse Kasteleyn operator as an explicit contour integral is
particularly well-suited for asymptotic analysis: it is then possible to extract
from standard saddle point analysis the limiting behaviour of local
probabilities, and from them reconstruct the rescaled expected height function
which would give the limit shape.

We first give in Section~\ref{sec:arctic_ellipse} a short derivation of the
arctic circle (ellipse) theorem, by the saddle point method, as presented
in~\cite{ryg} for the uniform measure,
inspired by its use in~\cite{OR1} for plane partitions. Then, in Section~\ref{sec:crit_kl}
and \ref{sec:limit_shape_g1}, we explore some limit shape results
for Fock's weights given by genus 0 and 1 M-curves. Finally we discuss in
Section~\ref{sec:limit_shape_g} how the
geometric arguments of~\cite{BerggrenBorodin} can be adapted to this more
general context to give a less restrictive result, see also the forthecoming paper~\cite{BobenkoBobenkoSuris}. 

\subsection{A short derivation of the arctic circle
theorem}\label{sec:arctic_ellipse}

A short derivation of the arctic circle theorem for uniform weights is presented
in~\cite{ryg}. We give briefly here a variant
for Kenyon's \emph{critical} (genus 0) weights ,
where all the parameters
$\alpha_j$ (resp. $\beta_j$, $\gamma_j$, $\delta_j$) are equal to a single value
$\alpha$ (resp. $\beta$, $\gamma$, $\delta$) satisfying the following cyclic
order
\begin{equation*}
  \alpha < \gamma < \beta < \delta .
\end{equation*}

By~\cite{CKP}, we know that the rescaled height function converges in
probability, for the uniform topology, to a deterministic continuous function,
with a gradient contained in some polygon.
Our goal is to determine here the interface between the region where the
gradient of the limiting height function is extremal and where it is not.

The expected gradient of the height function is directly related to the
probability that an edge of given type/orientation at that position
is a dimer.
Let us look at a particular edge $\es=(\ws,\bs)$ crossed by train-tracks with
parameters $\delta$ and $\alpha$, where $\ws$ has coordinates
$(\ws_x,\ws_y)=(2i,2j+1)$ and
$\bs$ has coordinates $(\bs_x,\bs_y)=(2i-1,2j)$.
By~Equation~\eqref{eq:Kinv_homog_sw}, the probability to see this
edge in a random dimer configuration is given by:
\begin{equation*}
  \Ks_{\ws,\bs}\Ks^{-1}_{\bs,\ws} = (\alpha-\delta)
  \frac{1}{(2i\pi)^2}\int_{\C_2}\int_{\C_1}
  \frac{g_{\bs,0}(u) g_{0,\ws}(v)}{v-u}
  \left(\frac{(u-\beta)(v-\delta)}{(u-\delta)(v-\beta)}\right)^n du\, dv
\end{equation*}
where
\begin{align*}
  g_{0,\bs}(u)&= \frac{(u-\delta)^{i-1} (u-\alpha)^j}{(u-\gamma)^i (u-\beta)^j},
              &
  g_{0,\ws}(v)&= \frac{(v-\gamma)^i (v-\beta)^j}{(v-\delta)^i (v-\alpha)^{j+1}}.
\end{align*}

We are interested in the behaviour of the integrand when $n$ is large, and
$(i/n, j/n)\to (x,y)$, which can be rewritten in this asymptotic regime as:
\begin{equation}
  \exp(n(F(u;x,y)-F(v;x,y)+o(1)))
  \label{eq:integrand_proba_n}
\end{equation}
where
\begin{equation*}
  F(u;x,y) = y \log(u-\alpha) + (1-y)\log(u-\beta) - x \log(u-\gamma) -
  (1-x)\log(u-\delta).
\end{equation*}

The critical points of $F$ are given by a quadratic equation in $u$: there are
therefore two of them (counted with multiplicity).
In order to apply the saddle point, one needs to move continuously the contour
to make them pass through the critical points in the direction for which the
critical point will be a maximum (resp.\ a minimum) for the real part of $F$ for
the variable $u$ (resp. for $v$). In that configuration, the double integral
tends to 0 exponentially fast as $N$ goes to infinity.
See for example~\cite{OR1} where this technique has been introduced in the
context of tilings.
When the two critical points are not on the real line, one needs to make one of
the contour cross the other, at least partially, to make the two contours pass
through the two critical points (and cross orthogonally). By doing so, we get an
additional contribution, given by the integral along the path between the two
critical point of the residue of the integrand when $u=v$, which becomes the
main contribution (the remaining double integral goes to zero by the saddle
point method). See Figure~\ref{fig:saddle-point-contour}.

\begin{figure}
\centering
\hfill
\includegraphics[height=45mm]{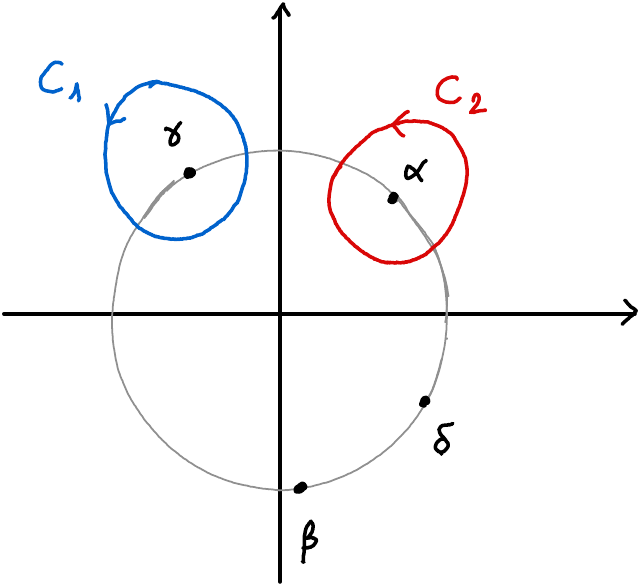}
\hfill
\includegraphics[height=5cm]{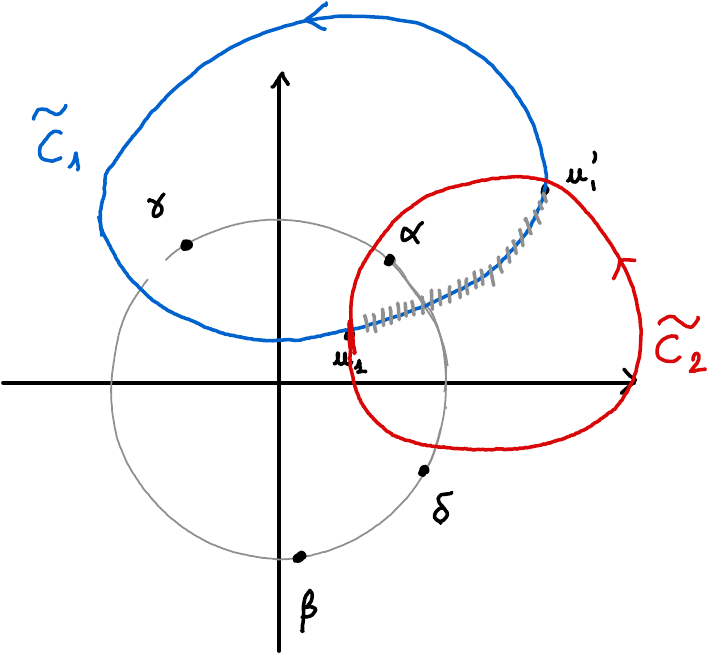}
\hfill\mbox{}
\caption{Left: representation of the two contours of integration when $\Sigma$
  is the Riemann sphere for the inverse Kasteleyn matrix of
  Section~\ref{sec:arctic_ellipse}. The curve $A_0$ is the unit circle in this
  case.
  Right: deformation of the contours in the case when the two critical points of
  $F$ are not on the unit circle (they are thus on different sides), to be in
  position to apply the saddle point method.
   Deforming the contours like this creates unwanted
  contribution to the integral of the residue at $u=v$ along the hatched segment
  joining the two critical points of $F$, which has to be subtracted.
}
\label{fig:saddle-point-contour}
\end{figure}

The
corresponding probability of the corresponding edge converges to a number
strictly between 0 and 1: we are in the so-called \emph{liquid region}, or
\emph{rough phase}.
On the contrary, when the two critical points are both real, the integral
becomes trivial, and the corresponding probability is either 0 or 1: we are then
in a \emph{frozen region}.

The \emph{arctic curve} describing the transition from the liquid region the a
frozen phase is thus given by the set of coordinates $(x,y)$ for which the two
critical points merge on the real axis.

In our case, these two critical points merge when $x$ and $y$ satisfy the
following equation:
\begin{equation*}
r^2 (x-\frac{1}{2})^2 +r^2 (y-\frac{1}{2})^2 
+ 2r(2-r)
(x-\frac{1}{2})(y-\frac{1}{2})  = r-1
\end{equation*}
where
\begin{equation*}
  r=\frac{(\beta-\alpha)(\delta-\gamma)}{(\beta-\gamma)(\delta-\alpha)}
  =(\alpha,\gamma;\beta,\delta) \in (1,+\infty)
\end{equation*}
is the cross ratio of the four points $\alpha, \gamma,\beta,\delta$.

We know by~\cite{KO:Harnack} that two sets of isoradial critical
weights correspond are gauge equivalent if the corresponding train-track
parameters are related by a M\"obius transformation preserving the unit circle.
The cross-ratio $r$ being M\"obius invariant, it is expected that the limit shape
if a function of this parameter.

When $r=2$
(for which a representative is when $\alpha$, $\gamma$, $\beta$, $\delta$
cut the unit circle in four equal arcs), the corresponding Boltzmann measure is
uniform, and the arctic curve is a circle. A cross-ration $r\neq 2$ can be
obtained by putting different weight on NE/SW and NW/SE dominos.
Choosing the parameters $\alpha,\gamma,\beta,\delta$ to be $\pm e^{\pm
i\theta/2}$, the ratio between the weights is given by $\tan\frac{\theta}{2}$ and
$r=(\cos\frac{\theta}{2})^{-2}$. In this case, the arctic curve is an ellipse,
as proved by Johansson~\cite[Theorem~2.4]{Johansson}.

\begin{figure}
  \begin{center}
    \includegraphics[width=6cm]{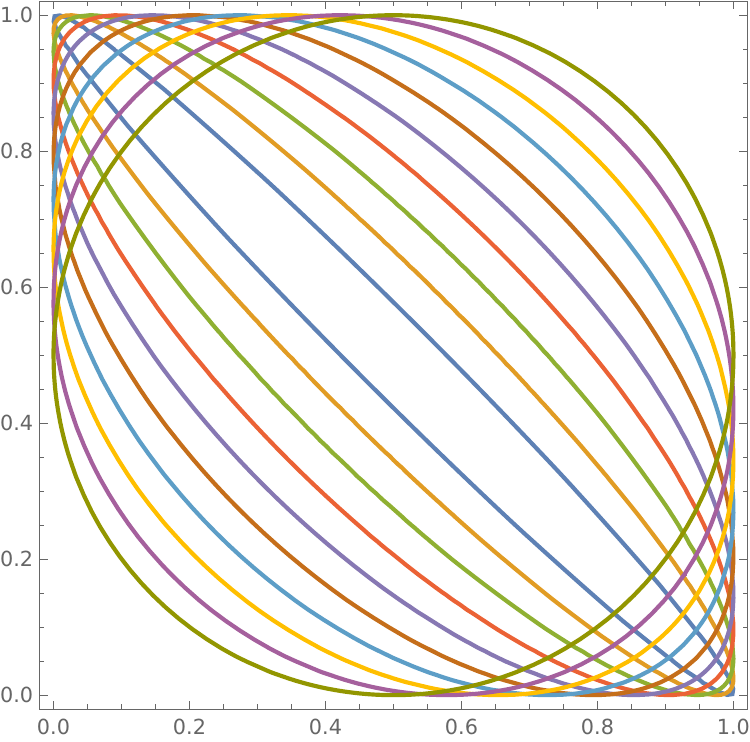}
    \caption{Arctic regions for (1-)periodic critical weights, for various values
      of the cross-ratio $r$ between $\alpha$, $\beta$, $\gamma$ and $\delta$.
      Here $r=(\cos\frac{\theta}{2})^{-2}$ and $\theta=k\frac{\pi}{20}$ for
      $k=1,\ldots, 10$. The green circle corresponding to the classical arctic circle
      for the uniform measure, for $k=10$.
    }\label{fig:ellipses_aztec}
  \end{center}
\end{figure}

\subsection{The (periodic) critical Aztec diamond has no gaseous
phase}\label{sec:crit_kl}

We assume now that the parameters $\alphab, \betab$ (resp.\
$\gammab, \deltab$) are periodic with period $l$ (resp. $k$),
for some $k,l \geq 1$.

The integrand in the formula describing the probability of a single edge has the
same form, as Equation~\eqref{eq:integrand_proba_n}, where now the function
$F(u;x, y)$ is given by the formula:
\begin{multline*}
  F(u;x,y) = \frac{1}{l} \sum_{j=1}^l \left[%
    y\log(u-\alpha_j)+(1-y)\log(u-\beta_j)
  \right]\\
  -\frac{1}{k} \sum_{j=1}^k \left[%
    x\log(u-\gamma_j)+(1-x)\log(u-\delta_j)
  \right].
\end{multline*}

The equation for critical points of $F(\cdot;x,y)$ can be rewritten as:
\begin{equation*}
  \frac{1}{l} \sum_{j=1}^l \left[%
    \frac{y}{u-\alpha_j}+\frac{1-y}{u-\beta_j}
  \right]
  =\frac{1}{k} \sum_{j=1}^k \left[%
    \frac{x}{u-\gamma_j}+\frac{1-x}{u-\delta_j}
  \right],
\end{equation*}
which by putting everything above the same denominator, becomes a polynomial
equation in $u$ of degree $2k+2l-2$. It has thus $2k+2l-2$ complex roots,
counted with multiplicity.

On the other hand, writing $u=e^{2i\bar{u}},\alpha_j =
e^{2i\bar{\alpha}_j},\ldots$, and using the fact that
\begin{equation*}
  \frac{2i u}{u-\alpha} =
  \frac{e^{i(\bar{u}-\bar{\alpha})}}{\sin(\bar{u}-\bar{\alpha})} = 
    i + \cot(\bar{u}-\bar{\alpha}),
\end{equation*}
the same critical equation has the following form:
\begin{equation*}
\frac{1}{l}\sum_{j=1}^l \left[
  y\cot(\bar{u}-\bar{\alpha}_j) +(1-y)\cot(\bar{u}-\bar{\beta}_j)
\right]
=
  -\frac{1}{k} \sum_{j=1}^k \left[%
    x\cot(\bar{u}-\bar{\gamma}_j)+(1-x)\cot(\bar{u}-\bar{\delta}_j)
  \right]
\end{equation*}
when $\bar{u}$ is real, both sides are real.
The left-hand side takes all real values between $-\infty$ and $+\infty$ when
$\bar{u}$ is in $(\bar{\alpha}_j,\bar{\alpha}_{j+1})$ or
$(\bar{\beta}_j,\bar{\beta}_{j+1})$, for some $1\leq j\leq l-1$, whereas the
right-hand side stays bounded on those intervals. As a consequence, there is at
least a solution of the critical equation on each of these intervals.
Reasoning in the same way on the intervals
$(\bar{\gamma}_j,\bar{\gamma}_{j+1})$, $(\bar{\delta}_j,\bar{\delta}_{j+1})$,
$1\leq j\leq k-1$, exchanging the roles of the left- and right-hand sides,
this gives in total at least $2k+2l-4$ real roots.
Therefore, there are at most two complex, non-real roots, for the equation in
$\bar{u}$, which have to be complex conjugated. This means that for the
original equation in $u=e^{2i\bar{u}}$, the two extra solutions are not on the unit
circle and the reciprocal of the complex conjugate of one is equal to the other.

Repeating the same saddle-point analysis as above, one can see that the
probability of the considered edge, will have a limit, which will be non trivial
in $(0,1)$ if and only the two extra critical points are not on the unit circle.
When this is the case, let us
call $u_0$ the one inside the unit disc. Then one sees that we can find a unique
point $(x,y)$ which has $u_0$ (and the inverse of its complex conjugate) as critical points for
$F$, by solving the system of two real linear equations obtained by separating
the real and imaginary part of
\begin{equation*}
    \frac{\partial}{\partial u}F(u_0;x,y) = 0.
\end{equation*}
whereas, the edge will be frozen if all the critical points are real.
The argument that this linear system has a unique solution, which is furthermore
in the square $[0,1]$ is an slight adaptation of the proof
of~\cite[Theorem~4.9]{BerggrenBorodin}.

The transition between the two regimes occur when the two extra complex critical
points merge at some $u_0=e^{2i\bar{u_0}}$ on the unit circle, where they become zeros of the first, and second
derivative of $F(\cdot; x, y)$. The \emph{arctic curve} separating the
two regimes is obtained finding $x$ and $y$ such that
\begin{equation}
  \begin{cases}
    \frac{\partial}{\partial u}F(u_0;x,y) = 0\\
    \frac{\partial^2}{\partial u^2}F(u_0;x,y) = 0
  \end{cases}
\end{equation}
for any point $u_0$ on the unit circle $S^1$. Since $F$ is an affine function of $x$ and $y$, this amounts
to solving again a linear system in $x$ and $y$ with
coefficients which are explicit functions of $u_0$, yielding an explicit
parametrization of the arctic curve 
$u_0\in S^1\mapsto (x(u_0), y(u_0))$.

The result above can be summarized in the following proposition:

\begin{prop}
  For periodic ``genus 0'' weights, with arbitrary fundamental domain,
  \begin{itemize}
    \item The map from the unit disc to the liquid region is a
      homeomorphism.
    \item the arctic curve has
      a single component and has an explicit parametrization by trigonometric (or
      rational) functions. It is thus a real algebraic curve of genus 0.
      In particular there is no gaseous phase.
    \item The tangency points with the sides of the of the square $[0,1]$
      at the bottom (resp. top, left, right) to $u_0$ taking one of the values
      of $\alphab$ (resp. $\betab$, $\gammab$, $\deltab$).
  \end{itemize}
\end{prop}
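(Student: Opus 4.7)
The plan rests on the analysis of the critical points of $F(\cdot;x,y)$ carried out in the paragraphs preceding the statement: of the $2k+2l-2$ critical points of $F(\cdot;x,y)$, at least $2k+2l-4$ are real (on $S^1$, after the substitution $u=e^{2i\bar u}$), and the two remaining ones are either both real or come as a pair $(u_0,1/\bar u_0)$ with $u_0$ in the open unit disc $\mathbb{D}$. The liquid region $\mathcal{L}$ corresponds to the off-circle case, and the arctic curve to the merging of these two extra points on $S^1$.

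First, I would construct the homeomorphism $\Phi:\mathbb{D}\to\mathcal{L}$ sending $u_0$ to the unique $(x,y)\in[0,1]^2$ for which $u_0$ is a critical point of $F(\cdot;x,y)$. For $u_0 \in \mathbb{D}\setminus\mathbb{R}$, the equation $\partial_u F(u_0;x,y)=0$ is $\mathbb{C}$-affine in $(x,y)$; separating real and imaginary parts yields a $2\times 2$ real linear system whose determinant I would compute explicitly and verify is nonzero off $\mathbb{R}$. This gives a continuous map $\Phi$; injectivity follows from the uniqueness of the off-circle critical pair and surjectivity from the definition of $\mathcal{L}$. The inclusion $\Phi(u_0)\in[0,1]^2$ is an adaptation of \cite[Theorem~4.9]{BerggrenBorodin}. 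Invariance of domain, together with the fact that both $\mathbb{D}$ and $\mathcal{L}$ are open subsets of $\mathbb{R}^2$ of the same dimension, upgrades the continuous bijection to a homeomorphism.

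Next, for the arctic curve, a double critical point at $u_0\in S^1$ is characterized by the simultaneous conditions
\begin{equation*}
\partial_u F(u_0;x,y)=0,\qquad \partial_u^2 F(u_0;x,y)=0,
\end{equation*}
which is a linear system in $(x,y)$ whose coefficients are rational functions of $u_0$ (equivalently, trigonometric functions of $\bar u_0$, via $u_0=e^{2i\bar u_0}$). Solving it yields the announced explicit parametrization $u_0\in S^1\mapsto(x(u_0),y(u_0))$; since $S^1$ is connected and the parametrization is rational, the arctic curve has a single component and is a real algebraic curve of genus $0$. The absence of a gaseous phase is then a direct topological consequence: any gaseous phase would produce a bounded component of the frozen region in the interior of $[0,1]^2$, hence a hole in $\mathcal{L}$, contradicting $\mathcal{L}\simeq\mathbb{D}$.

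Finally, for the tangency points, I would study the extension of $\Phi$ to $S^1$ by analyzing the limits $u_0\to\alpha_j$, $\beta_j$, $\gamma_j$, $\delta_j$. Near $u_0=\alpha_j$, the only singular contribution to $\partial_u F$ and $\partial_u^2 F$ comes from the term $\tfrac{y}{l}\log(u-\alpha_j)$, so the requirement that both vanish at $u_0=\alpha_j$ forces $y(u_0)\to 0$, placing the limit point on the bottom side $\{y=0\}$ of $[0,1]^2$. Expanding $u_0=\alpha_j+\varepsilon$ and identifying the leading order in $\varepsilon$ yields the tangent direction and hence the tangency. Symmetric arguments handle the other three families and the remaining sides of the square. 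The main obstacle is the explicit non-degeneracy computation for the $2\times 2$ linear system defining $\Phi$, together with the verification that its image lies in $[0,1]^2$; the computation is less delicate here than in \cite{BerggrenBorodin} thanks to the rational nature of all quantities in genus $0$.
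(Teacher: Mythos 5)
Your proposal follows essentially the same route as the paper: counting the real critical points via the cotangent form to conclude there are at most two off-circle critical points, defining the liquid region/unit-disc correspondence by solving the affine system $\partial_u F(u_0;x,y)=0$ (with the containment in $[0,1]^2$ deferred to an adaptation of~\cite[Theorem~4.9]{BerggrenBorodin}, exactly as the paper does), and parametrizing the arctic curve by the double-critical-point condition. The extra details you supply (invariance of domain for the homeomorphism, the topological no-gas argument, and the local analysis near $u_0=\alpha_j$ forcing $y\to 0$ for the tangency points) are consistent elaborations of steps the paper leaves implicit.
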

The genus 0 weights are not generic, and is not covered by the results of
Berggren and Borodin~\cite{BerggrenBorodin}.
The statement about the tangency points has been noticed by Dan Betea for Schur
measures corresponding to the Aztec diamond, when parameters are periodic (which
is essentially Stanley's weights in the periodic setting)~\cite{DanBetea}.

The discussion above is mainly about the arctic curve separating the liquid and
the frozen region. Actually,
the saddle point analysis allows to find the frequency of each type of domino
near any point of the liquid region, from which we can reconstruct the expected
slope, and then the limit shape.

\begin{figure}
  \centering
  \includegraphics[width=4cm]{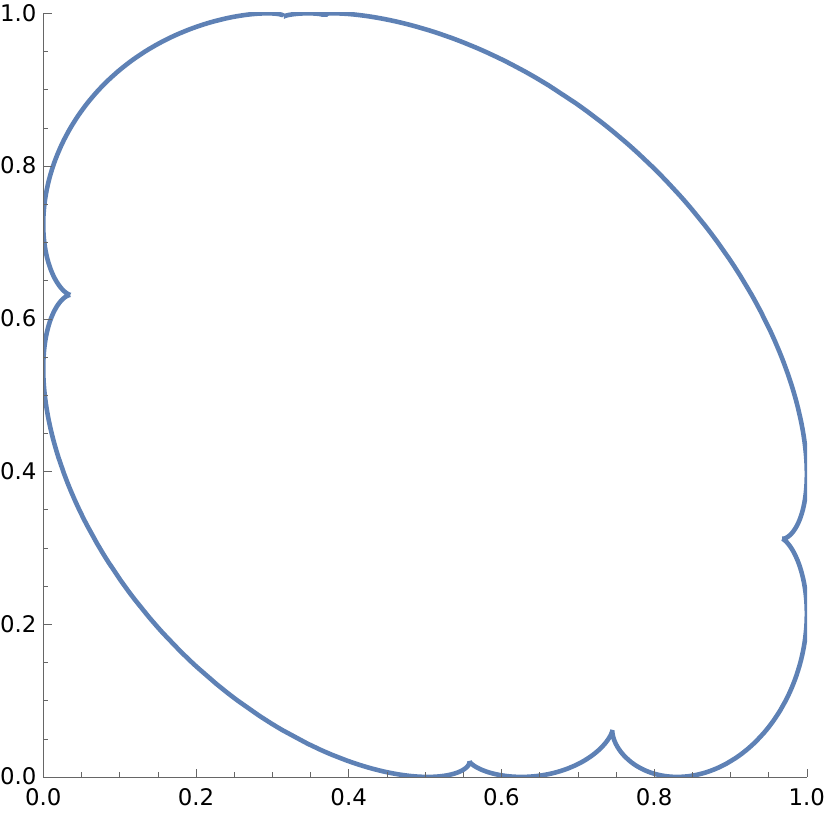}
  \caption{The arctic curve for critical periodic Aztec diamond (with period 3
  vertically, and 2 horizontally).}
\end{figure}

\subsection{The elliptic case}\label{sec:limit_shape_g1}

We briefly study here the case of elliptic weights, with periodic train-track
parameters.
The surface $\Sigma$ is now a torus
$\mathbb{C}/(\mathbb{Z}+\tau\mathbb{Z})$ where $\tau\in i\mathbb{R}^*_+$.

First, as in Section~\ref{sec:arctic_ellipse}, let us consider the case where the parameters $\alpha_j$
(resp.\ $\beta_j$, $\gamma_j$, $\delta_j$) are equal to the same value $\alpha$
(resp.\ $\beta$, $\gamma$, $\delta$) belonging to $\mathbb{R}/\mathbb{Z}$.

Rewriting the probability of a single edge using~\eqref{eq:Kinv_homog}, with the
same notation as for the critical case, we get:
\begin{equation*}
  \K_{\ws,\bs} \K^{-1}_{\bs,\ws}
= \int_{\C_1}\int_{\C_2} \exp(n(F(u;x,y)-F(v;x,y))+o(1))
\frac{du\, dv}{\theta_1(\pi(v-u))}
\end{equation*}
where now
\begin{multline*}
  F(u;x,y) =
  y \log\theta_1(\pi(u-\alpha)) + (1-y)\log\theta_1(\pi(u-\beta))\\
 -x \log\theta_1(\pi(u-\gamma)) - (1-x)\log\theta_1(\pi(u-\delta)).
\end{multline*}

The differential of $F$ is a meromorphic 1-form on the torus $\Sigma$. It has a
divisor of degree $2g-2=0$. Since $F$ is periodic in the horizontal direction of
the torus, the integral of $dF$ along the cycle
$\frac{\tau}{2}+\mathbb{R}/\mathbb{Z}$
is equal to 0. Moreover, since $dF$ is real on this cycle, the intermediate
value theorem implies that $dF$ has at least two zeros on this cycle.
Since it has four simple poles, located at $\alpha$, $\beta$, $\gamma$ and
$\delta$, it must have two additional zeros.

These extra zeros can be either both on the same real component of the torus (in
a frozen or gaseous phase),
or both non-real and symmetric, and complex conjugated (in the liquid phase).

The boundary of the liquid phase is thus obtained by looking at the transition
between the two regimes, when the two extra critical points merge into a double
critical point.
As in the critical case,
asking for $u$ to be a double critical point of $F$ yields a linear
system in $x$ and $y$ with coefficients that are given in terms of elliptic
functions of $u$. When $u$ runs along the two real components of the torus, we
get two closed curves: the outer curve separates the liquid phase from the
frozen ones in the corners; the inner one separates the liquid phase from the
gas bubble near the center.

\begin{figure}
  \begin{center}
    \includegraphics[width=4cm]{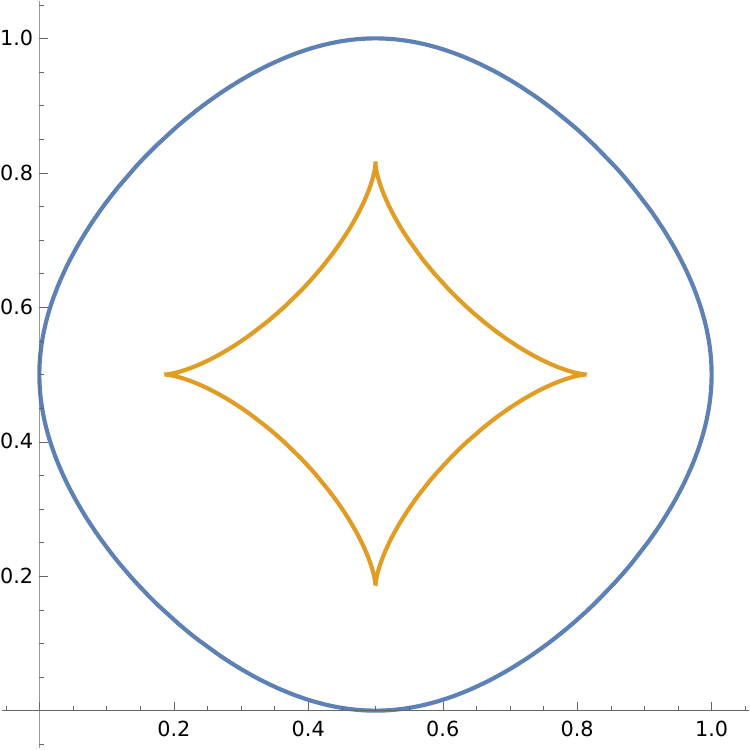}
    \includegraphics[width=4cm]{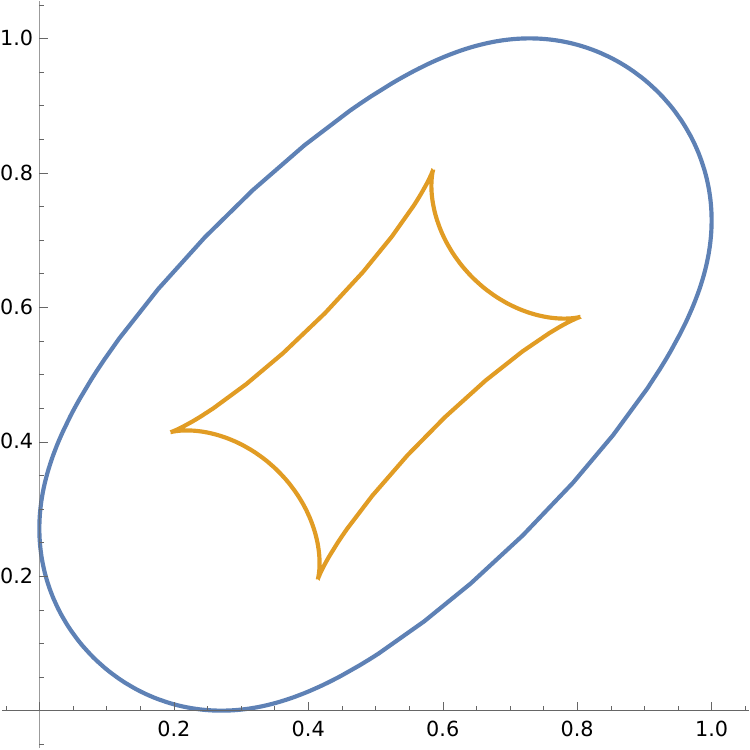}
    \includegraphics[width=4cm]{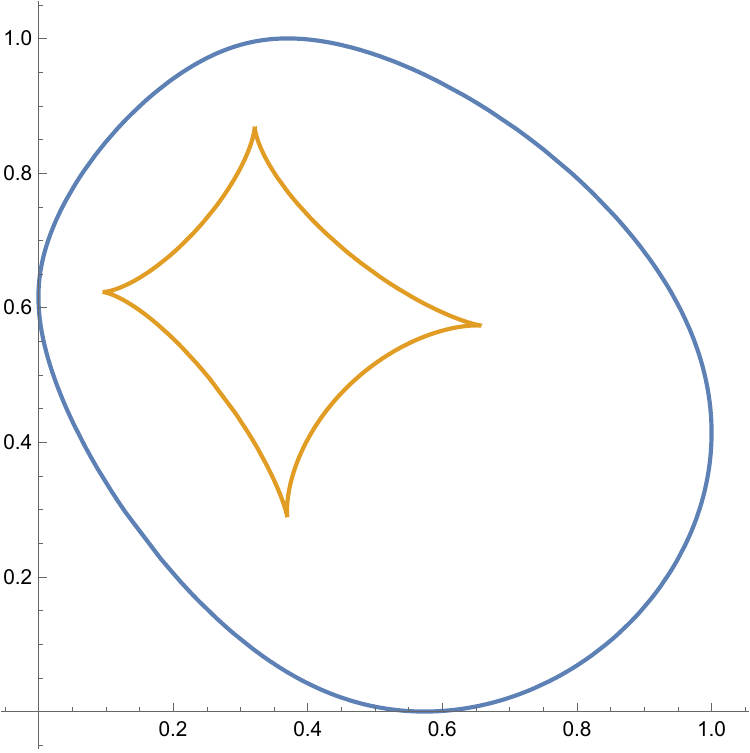}

    \caption{Three arctic curves for a torus with $\tau=i\sqrt{3}$. Left: the
      limit shape for the unbiased $2\times 2$-periodic Aztec
      diamond ($\alpha,\beta,\gamma,\delta$ cut the circle into four equal
      arcs). Center: the limit shape for the biased
      weights with $\rho=\frac{1}{6}$
      ($\alpha=0,\gamma=\frac{1}{6},\beta=\frac{1}{2},\delta=\frac{5}{6}$)
      and the same value of $\tau$.
      Right:
      $\alpha=0,\beta=\frac{3}{5},\gamma=\frac{3}{10},\delta=\frac{17}{20}$
    }
    \label{fig:aztec_2x2}
  \end{center}
\end{figure}

Note that having periodic (even constant here) train-track parameters does not
imply that the edge weights are periodic~\cite[Section~4]{BCdT:genusg}.
In order for the edge weights to be periodic, for a fundamental domain
containing two white and two black vertices, the parameters
$\alpha,\beta,\gamma,\delta$ should satisfy: 
\begin{equation}
  \alpha-\beta+\gamma-\delta = 0 \text{ mod } 1,\qquad
  \alpha-\beta-\gamma+\delta = 0 \text{ mod } 1.
  \label{eq:2x2perio}
\end{equation}
which means that $\beta-\alpha=\pm\frac{1}{2}$, and
$\delta-\gamma=\pm\frac{1}{2}$.
This case contains in particular the $2\times 2$ periodic Aztec diamond, studied
by
\begin{itemize}
  \item Chhita and Johansson~\cite{ChhitaJohansson} by taking
    $\alpha=0,\beta=\frac{1}{2},\gamma=\frac{1}{4},\delta=\frac{3}{4}$
    and letting $\tau$ vary on the imaginary axis,
  \item Borodin and Duits~\cite{BorodinDuits}, for a specific value of $t$,
    which in fact does not play a role in the computation of the limit shape.
\end{itemize}

These are the cases depicted on the left and middle of
Figure~\ref{fig:aztec_2x2}.
An example where the periodicity condition is \emph{not} satisfied is shown on
the right of that same figure. Note that in this case, the picture does not have
axial symmetries anymore.

One can extend this study when the parameters $\alphab$, $\betab$, $\gammab$,
and $\deltab$ are $k\times l$ period, like in Section~\ref{sec:crit_kl}.
With the additional constraint that the corresponding edge weights are periodic,
this covers in particular the
case $k=2, l=2$ in the study of Berggren and
Borodin~\cite{BerggrenBorodin} (without the technical assumption they have on
the weights to guarantee that the vertical tentacles of the amoeba of the corresponding
spectral curve are separated), but also the extension to the case where the
periodicity condition for edge weights is not met, like the one depicted on
Figure~\ref{fig:aztec_4x4}. One can relate here, like in the critical case, the
number of cusps along bottom, left, top, right side of the square with the
number of distinct values for
$\alphab$, $\gammab$, $\betab$, $\deltab$ respectively, whereas there are four
of them along the liquid/gas interface.

\begin{figure}
  \centering
  \hfill
  \includegraphics[width=6cm]{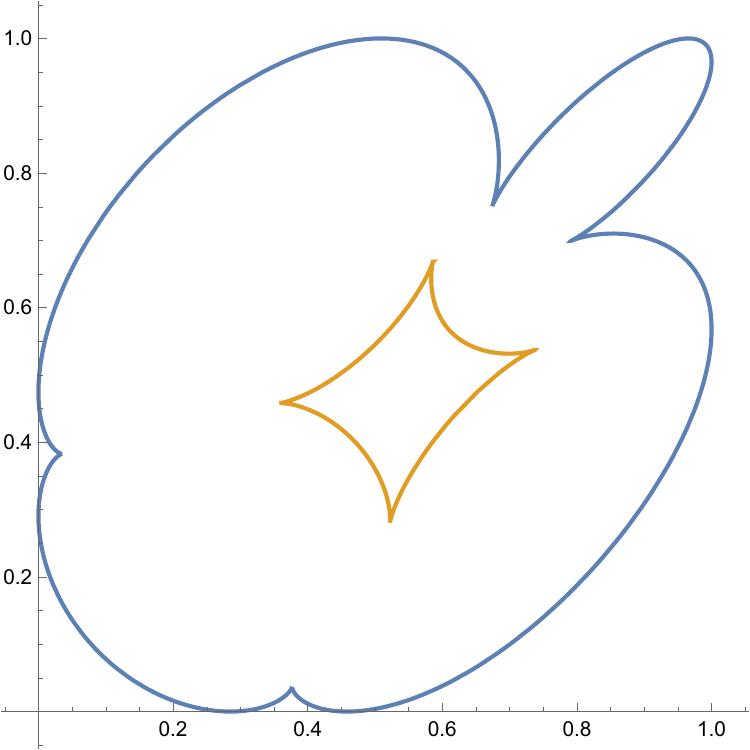}\hfill
  \includegraphics[width=6cm]{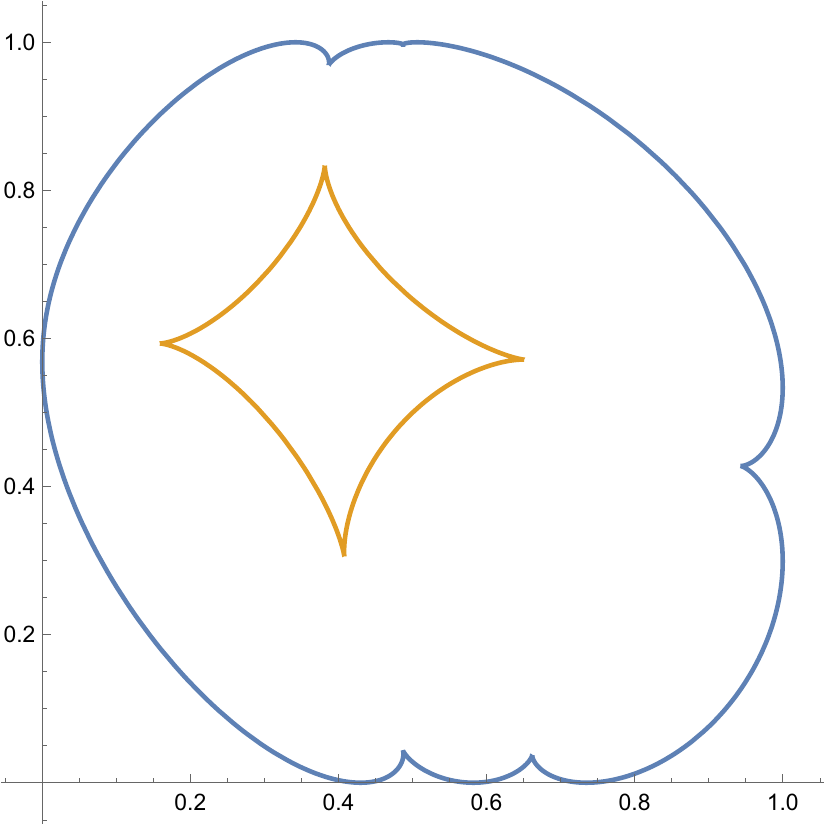}\hfill\mbox{}
  \caption{Left: a 2x2 example with $\alphab=\{0.744904, 0.448131\}$,
           $\betab=\{2.35367, 1.58096\}$,
           $\gammab=\{1.21721, 0.983251\}$,
           $\deltab=\{3.05117, 2.38214\}$. Right: a 3x2 example where
           $\alphab=\{0.599648, 0.0928766, 0.288755\}$,
           $\betab=\{2.29566, 1.7928, 2.20249\}$,
           $\gammab=\{1.18328\}$,
           $\deltab=\{3.06226, 2.74699\}$. The fact that there is a single
           contact point on the left boundary (whereas there are two on the
           right side) is a consequence of the fact that $\gammab$ is 1-periodic
           (constant) whereas $\deltab$ is 2-periodic.
         }
  \label{fig:aztec_4x4}
\end{figure}

\subsection{Higher genus}\label{sec:limit_shape_g}

The study above can be done on surfaces with higher genus,  with train-track
parameters assigned in a periodic way, even though it can
be technically challenging to work with prime forms to control the saddle point
method. Imposing periodicity for weights from periodicity of train-track
parameters implies that the fundamental domain is large enough (one needs to
pick $g$ distinct integer points in the interior points of the corresponding
Newton polygon), but we can relax this condition (and get just quasi-periodic
weights), as in the genus 1 case above.

In~\cite{BerggrenBorodin}, the authors give a similar looking formula for the
inverse of the Kasteleyn matrix of the Aztec diamond with periodic weights. The
main differences are the fact that the integrals are written with respect to the
coordinates $z$ (and $w$) of the spectral curve instead of an intrisic
coordinate of the Riemann surface $\Sigma$, and the fact that the integrand is a
block matrix%
\footnote{In particular compare the term $E(u,v)$ in Equation~\eqref{eq:Kinv} and
the term $z_1-z_2$ in their formula.
}.
Otherwise, unsurprisingly, the function $F$ appearing in the asymptotics of the
integral is the same as here. We refer to the paper~\cite{BerggrenBorodin} for a
detailed discussion of the saddle point method in the periodic case under
additional technical assumptions. Working with formula~\eqref{eq:Kinv} can be
seen as a way to circumvent the technical assumptions they have to extend their
results to non-generic cases, where the spectral curve is singular or the amoeba
has a degenerate behaviour. In that case, the geometric correspondence is to be
made between the limit shape of the Aztec diamond and the ``upper part''
$\Sigma^+$ of $\Sigma$, which is the connected component of
$\Sigma\setminus\bigcup_{i=0}^g A_i$ whose oriented boundary is $A_0-\sum_{i=1}^g A_i$
(instead of the amoeba of the spectral curve).
Most of their geometric arguments can be carried or adapted in this slightly
changed context to obtain the following:

\begin{prop}[\cite{BerggrenBorodin}]
\leavevmode
\begin{itemize}
  \item The map $\psi$ from $(x,y)$ in the liquid region to the critical point of $F$
    in the interior of $\Sigma^+$ is a homeomorphism.
  \item There is a bijective correspondence between gaseous bubbles and the real
    ovals $A_1,\cdots A_g$.
  \item There is a bijective correspondence between frozen regions and connected
    components of
    $A_0\setminus\{\alpha_j,\beta_j,\gamma_j,\delta_j\ ;\ 1\leq j \leq n\}$.
\end{itemize}
\end{prop}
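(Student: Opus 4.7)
The plan is to study the critical points of the phase function $F(\cdot;x,y)$ produced by the saddle point analysis of~\eqref{eq:Kinv}. Its exterior derivative $dF$ is a meromorphic 1-form on $\Sigma$ with simple poles at the train-track parameters on $A_0$ and residues that are real affine functions of $(x,y)$ summing to zero. The degree of its divisor equals $2g-2$. Using that $dF$ is real on the real ovals, that its periods along suitable cycles vanish, and the intermediate value theorem (in the spirit of the genus $1$ discussion in Section~\ref{sec:limit_shape_g1}), one shows that all but two of the zeros of $dF$ are forced onto constrained positions on the real ovals. The two remaining \emph{free} zeros determine the phase at $(x,y)$: they form a complex conjugate pair with one representative in $\Sigma^+$ (liquid phase), both lie real on some $A_i$ with $i\geq 1$ (gaseous phase), or both lie real on $A_0$ (frozen phase). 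The three bullet points then rephrase this trichotomy at the level of connected components.

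For the first point, set $\psi(x,y)$ to be the unique free zero located in $\Sigma^+$. The critical equation $\partial_u F(u_0;x,y)=0$ is $\CC$-linear in $(x,y)$, so splitting real and imaginary parts yields a real $2\times 2$ linear system for $(x,y)$ in terms of $u_0\in\Sigma^+$. Following the positivity argument of~\cite[Theorem~4.9]{BerggrenBorodin}, transported from the amoeba to $\Sigma^+$, one checks that the determinant of this system is non-zero when $u_0$ lies in the interior of $\Sigma^+$, and that its unique solution lies in $(0,1)^2$. Continuity of $\psi$ then follows from the implicit function theorem, while injectivity and surjectivity are direct consequences of the unique solvability of the linear system; this establishes that $\psi$ is a homeomorphism onto the interior of $\Sigma^+$.

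For the second and third points, the map sending a gaseous bubble (respectively, a frozen region) to the real oval $A_i$ with $i\geq 1$ (respectively, the arc of $A_0\setminus\{\alpha_j,\beta_j,\gamma_j,\delta_j\}$) in which its free zeros sit is well defined, since these zeros vary continuously with $(x,y)$ and cannot exit their connected component without either merging (passing into the liquid region) or crossing a pole on $A_0$. Injectivity follows once more from the non-vanishing Jacobian of the previous paragraph. The main obstacle is surjectivity: one must exhibit, for every real oval $A_i$ and for every arc of $A_0\setminus\{\alpha_j,\beta_j,\gamma_j,\delta_j\}$, a concrete $(x,y)\in[0,1]^2$ realizing the corresponding configuration of free zeros. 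For the frozen regions this is done from the corners and boundary of the Aztec diamond: boundary forcing pins down specific edges, the critical points can be read off directly from~\eqref{eq:Kinv_homog}, and the four families $\alphab,\betab,\gammab,\deltab$ match respectively the bottom, top, left and right sides of $[0,1]^2$. For the gaseous bubbles, one starts from a symmetric reference weight choice (as in Section~\ref{sec:limit_shape_g1} for $g=1$) whose bubbles are easy to locate, and uses a homotopy argument to track how each bubble persists as the Fock parameters are deformed. Both surjectivity arguments can be adapted from~\cite[Sections~4-5]{BerggrenBorodin} once the amoeba is replaced by $\Sigma^+$ and the tentacles of the amoeba by the train-track angles on $A_0$.
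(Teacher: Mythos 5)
The paper does not actually prove this proposition: it is quoted from \cite{BerggrenBorodin}, and the text merely asserts that ``most of their geometric arguments can be carried or adapted'' once the amoeba of the spectral curve is replaced by $\Sigma^+$. Your sketch reconstructs exactly that adaptation, and its skeleton matches the hints the paper does give: the zero count for $dF$ via the degree $2g-2$ of its divisor together with reality and the intermediate value theorem on the ovals (as in Sections~\ref{sec:crit_kl} and~\ref{sec:limit_shape_g1}), the trichotomy of the two free zeros, and the real $2\times 2$ linear system in $(x,y)$ whose unique solvability in $[0,1]^2$ the paper attributes to an adaptation of \cite[Theorem~4.9]{BerggrenBorodin}. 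So on the first bullet you are on the intended route. Where you genuinely diverge --- and where your argument is weakest --- is surjectivity of the bubble/oval correspondence: a homotopy from a symmetric reference configuration does not by itself rule out a bubble degenerating to a point or disappearing along the deformation. The argument in \cite{BerggrenBorodin}, and the one implicit in the paper's phrasing, is instead to extend the homeomorphism $\psi^{-1}$ to the closure of $\Sigma^+$, so that each oval $A_i$ is carried onto the boundary of a gaseous bubble and each arc of $A_0\setminus\{\alpha_j,\beta_j,\gamma_j,\delta_j\}$ onto a liquid/frozen interface; the second and third bullets are then read off from the boundary behaviour of a single map rather than proved separately. Relatedly, injectivity of those two correspondences is a connectivity statement about where the free zeros can sit as $(x,y)$ varies in a connected component, not a consequence of the non-vanishing Jacobian from the first bullet. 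These two steps are asserted rather than established in your write-up; the rest is consistent with the (unwritten) proof the paper points to.
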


The saddle point method allows one to not only compute the probability that a
given edge occurs, but also to derive the asymptotics for the entry of inverse
Kasteleyn matrix for vertices $\bs$ and $\ws$ at finite distance, near the same
macroscopic
point $(x,y)$ in the liquid region: it converges to $A^{u_0}_{\bs,\ws}$,
where $u_0=\psi(x,y)$ is the critical point of $F(\cdot;x,y)$ in $\Sigma^+$, and
$A^{u_0}$ is the inverse from the family of inverses introduced
in~\cite{BCdT:genusg} for the Fock Kasteleyn operator on any
infinite minimal graph containing as a subgraph a neighborhood of the Aztec
diamond containing $\ws$ and $\bs$ (for example the infinite square lattice).
The inverse $A^{u_0}$ on the infinite square defines a Gibbs measure
$\mathbb{P}^{u_0}$ on dimer configurations by the usual determinantal formula:
\begin{equation*}
  \mathbb{P}^{u_0}((\ws_1,\bs_1),\dots, (\ws_k,\bs_k))
  = \Bigl(\prod_{j=1}^k \Ks_{\ws_j,\bs_j}\Bigr)\det_{1\leq i,j\leq k}
  \left(A^{u_0}_{\bs_i,\ws_j}\right)
\end{equation*}
Therefore, one can obtain in this generalized setup the analogue
of~\cite[Theorem~1.10]{BerggrenBorodin}:
\begin{prop}
  The local statistics of dimers around a point $(x,y)$ of the liquid region
  converge to the infinite Gibbs measure $\mathbb{P}^{u_0}$ on the square
  lattice, with $u_0=\psi(x,y)$.
\end{prop}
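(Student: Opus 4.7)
The plan is to establish pointwise convergence of each entry of $\Ks^{-1}$ to the corresponding entry of $A^{u_0}$ from~\cite{BCdT:genusg}; convergence of local statistics then follows immediately from Theorem~\ref{thm:measure_Kenyon}, since the probability of a fixed finite set of edges is a determinant built from $\Ks$ and $\Ks^{-1}$, while the Kasteleyn weights $\Ks_{\ws_j,\bs_j}$ converge trivially to the Fock weights on the infinite square lattice. The substance of the argument thus lies in the asymptotic analysis of the double contour integral of Theorem~\ref{thm:Kinv}, for $\bs,\ws$ at bounded distance from a macroscopic point $(x,y)$ in the liquid region.

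First I would rewrite the integrand of~\eqref{eq:Kinv} as $\exp\bigl(n(F(u;x,y)-F(v;x,y))\bigr)R_n(u,v)$, where $F$ is the function introduced in Sections~\ref{sec:arctic_ellipse}--\ref{sec:limit_shape_g} and $R_n(u,v)$ is a bounded prefactor encoding the fine positions of $\bs$ and $\ws$. In the liquid region, the preceding proposition provides a pair of complex-conjugate saddle points $u_0=\psi(x,y)\in\Sigma^+$ and $\bar u_0=\sigma(u_0)\in\Sigma^-$. I would then deform $\C_1$ through $u_0$ along a steepest descent path and $\C_2$ through $\bar u_0$ along a steepest ascent path. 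Since the two saddle points lie in opposite halves of $\Sigma$ whereas $\C_1$ and $\C_2$ are disjoint trivial loops on $A_0$, this deformation forces them to cross along an arc $\gamma$ joining $u_0$ to $\bar u_0$. The crossing contributes the residue at $u=v$, equal to $-\theta(p)\,g_{\bs,\ws}(v)$ (using that $g_{\bs,0}(v)g_{0,\ws}(v)=g_{\bs,\ws}(v)$, that the product of prime forms trivializes at $u=v$, and that the residue of $1/E(u,v)$ at $u=v$ equals $-1$), while the remaining double integral over the deformed contours decays exponentially in $n$ by the standard saddle point estimate.

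Combining this residue contribution with the single-integral term of~\eqref{eq:Kinv} and simplifying, one obtains in the limit
\begin{equation*}
\Ks^{-1}_{\bs,\ws}\longrightarrow \frac{1}{2\pi i}\int_{\Gamma_{u_0}}g_{\bs,\ws}(v)\,dv,
\end{equation*}
where $\Gamma_{u_0}$ is a closed contour on $\Sigma$ whose topology, relative to the real ovals $A_0,\dots,A_g$ and to $u_0$, is precisely the one defining $A^{u_0}$ in~\cite{BCdT:genusg}. Applying this to the $k^2$ entries $\Ks^{-1}_{\bs_i,\ws_j}$ of any fixed finite configuration and using continuity of the determinant, Theorem~\ref{thm:measure_Kenyon} yields convergence of all finite-dimensional marginals of the Boltzmann measure to those of $\mathbb{P}^{u_0}$.

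The main obstacle is the global control of the contour deformation on the surface $\Sigma$ of possibly high genus: one must ensure that the steepest descent/ascent paths issued from $u_0$ and $\bar u_0$ can be completed into closed loops avoiding the parameters $(\alpha_j),(\beta_j),(\gamma_j),(\delta_j)$, and that the intersection between the deformed contours $\C_1'$ and $\C_2'$ reduces to a single arc $\gamma$, so that exactly one residue contribution appears. This is the geometric content of the arguments of~\cite{BerggrenBorodin}, which carry over here thanks to the homeomorphism statement $\psi:\text{(liquid region)}\to\text{interior of }\Sigma^+$ of the preceding proposition (available in our general setting through Theorem~\ref{thm:Kinv}), and is also treated in the forthcoming~\cite{BobenkoBobenkoSuris}.
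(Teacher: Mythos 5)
Your proposal follows essentially the same route as the paper: pointwise convergence of $\Ks^{-1}_{\bs,\ws}$ to $A^{u_0}_{\bs,\ws}$ via steepest-descent deformation of the two contours in~\eqref{eq:Kinv}, with the residue along the arc joining the conjugate saddle points producing the single-contour integral defining $A^{u_0}$, followed by the determinantal formula of Theorem~\ref{thm:measure_Kenyon} to pass to local statistics. Your writeup is in fact more explicit than the paper's own discussion (which defers the saddle-point and contour-deformation details to~\cite{BerggrenBorodin}), and you correctly identify the genuine technical content as the global control of the deformation on the higher-genus surface.
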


We also note that the imaginary part of the equation used to determine the critical points of $F$, is the
same as the master equation for the \emph{tangent plane method} by Kenyon and
Prause~\cite{KenyonPrause:gradient, KenyonPrause:g05V, KenyonPrause:limitshapes}: indeed,
for a fixed $u$, the action $F$ is an affine
function of $x$ and $y$, and the coefficients of $x$ and $y$
(respectively $\log z(u)$ and $\log w(u)$) which are analytic functions of $u$,
and whose imaginary parts give the slope of the limit shape at the point
$(x,y)$, see~\cite[Section~4.5]{BCdT:genusg}.
The remaining constant part is encoding
the boundary conditions and should be directly related to the \emph{intercept
function}. We have thus two interpretations of the same equation: one coming
from finding critical points of the action functional $F$, the other other one by
looking at \emph{isothermal coordinates} in which the slope and the intercept of
the asymptotic height function describing the limit shape are harmonic.
Further investigation is needed to make a full connection between the two
approaches. A starting point is the expression of the expression of the surface
tension in terms of geometric quantities on $\Sigma$, obtained
in~\cite[Section~4.6]{BCdT:genusg},
see also~\cite{BobenkoBobenkoSuris}.

\bibliographystyle{alpha}
\bibliography{aztec}

\end{document}